\pgfplotsset{/pgf/number format/use comma,compat=newest}
\newtheorem{theorem}{Theorem}[section]
\newtheorem{lemma}[theorem]{Lemma}
\newtheorem{proposition}[theorem]{Proposition}
\newtheorem{definition}[theorem]{Definition}
\newtheorem{example}[theorem]{Example}
\newtheorem{remark}[theorem]{Remark}
\numberwithin{equation}{section}
\newcommand{\tikzmark}[1]{\tikz[overlay,remember picture] \node (#1) {};}
\newcommand{\DrawBox}[1][]{%
    \tikz[overlay,remember picture]{
    \draw[blue,#1]
      ($(left)+(-0.6em,0.9em)$) rectangle
      ($(right)+(0.3em,-0.3em)$);}
}
\newcommand\NN{\mathbb{{N}}}
\newcommand\QQ{\mathbb{{Q}}}
\newcommand\RR{\mathbb{{R}}}
\newcommand\CC{\mathbb{{C}}}
\newcommand\ZZ{\mathbb{{Z}}}
\DeclarePairedDelimiter{\abs}{\lvert}{\rvert}
\DeclarePairedDelimiter{\norm}{\lVert}{\rVert}
\newcommand{\smoother}[3]{
V_{#1, \text{#2}}^{#3}
}
\begin{document}

\title{Anisotropic, interpolatory subdivision and multigrid}

\author[]{M. Charina, M. Donatelli, L. Romani, V. Turati}

\begin{abstract}
In this paper, we present a family of multivariate grid transfer operators appropriate for anisotropic 
multigrid methods. Our grid transfer operators are derived from a new family of anisotropic interpolatory 
subdivision schemes. We study the minimality, polynomial reproduction and convergence properties of these 
interpolatory schemes and link their properties to the convergence and optimality of the corresponding
multigrid methods.  We compare the performance of our interpolarory grid transfer operators with the 
ones derived from a family of corresponding approximating subdivision schemes.
\end{abstract}

\maketitle

%%%%%%%%%%%%%%%%%%%%%%%%%%%%%%%%%%%%%%%%%%%%%%%%%%%%%%%%%%%%%%%%%%%%%%%
\section{Introduction}
%%%%%%%%%%%%%%%%%%%%%%%%%%%%%%%%%%%%%%%%%%%%%%%%%%%%%%%%%%%%%%%%%%%%%%%

In this paper, continuing our work in \cite{Charina2016multigrid}, we present a new family of subdivision based grid transfer operators whose approximation properties ensure the convergence and optimality of the corresponding anisotropic multigrid methods. 
Anisotropic elliptic problems arise when the diffusion is not uniform in every direction or when a standard finite difference discretization is applied on a stretched grid. In such case, classical multigrid methods are no longer reliable and different strategies must be implemented. Two common strategies are semi-coarsening and fine smoothers \cite{mgm-book}. In this paper we focus on semi-coarsening.
For simplicity of presentation, we study only the bi-variate case. 
In this case, the difference between the two coordinate directions is encoded in the family of dilation matrices 
\begin{equation} \label{eq:dilation}
M = 
\begin{pmatrix}
2 & 0 \\
0 & m
\end{pmatrix}, \qquad m \in \NN,\qquad \text{ gcd } (2,m) = 1.
\end{equation}
The corresponding anisotropic subdivision schemes, Sections \ref{sec:anisotropic_interp} and \ref{sec:anisotropic_pseudo}, allow for an appropriate multilevel reduction (via the grid transfer operators) of the size of the original system of equations  
\begin{equation} \label{eq:sys}
A_n \mathbf{x} = \mathbf{b}_n, \qquad A_n \in \CC^{n \times n}, \qquad \mathbf{b}_n \in \CC^n, \qquad n \in \NN,
\end{equation}
ensuring the linear computational cost of the iterative solver called multigrid.  Multigrid method \cite{Brandt1982guide, Hackbusch1985multi, RS1987, Wesseling1987linear} are efficient solvers for large ill-conditioned systems of equations with symmetric and positive definite system matrices $A_n$. It consists of two main steps: the smoother and the coarse grid correction, iterated until the remaining small linear system of equations is solved exactly. The smoother is a simple iterative solver such as Gauss-Seidel or weighted Jacobi, which is slowly convergent due to the ill-conditioning of the system matrix.
The coarse grid correction step is a standard error reduction step performed at a coarser grid. The projection of the problem onto a coarser grid and the lifting of the error correction term to the finer grid is done, in our case, via the grid transfer operators based on anisotropic subdivision schemes. 
The analysis uses the tools introduced in \cite{Charina2016multigrid}, where subdivision schemes and multigrid methods are related by means of symbols used in multigrid methods for Toeplitz matrices \cite{V-cycle_opt, BIT2014, NLL2010, Huckle2002}. At the best of our knowledge, the only paper that investigates anisotropic multigrid methods for block Toeplitz Toeplitz Block (BTTB) system matrices is \cite{Fischer2006}. 

We propose two types of families of anisotropic subdivision schemes: approximating and interpolating. 
Our results and numerical experiments show that both families lead to efficient grid transfer operators. Nevertheless, the computational cost of the multigrid based on interpolatory grid transfer operators is minimal due to the fewer non-zero coefficients in the corresponding subdivision rules. Indeed, our interpolatory subdivision schemes are constructed to be optimal in terms of the size of the support versus their polynomial reproduction properties. Similar constructions in the case of $M = 2 I_2$ are done in \cite{Han_Jia_1998optimal}, but
are not applicable for anisotropic multigrid. 
To study the dependence of the efficiency of multigrid on the reproduction/generation properties of subdivision, we also define a family of approximating schemes. Our construction resembles the one given in \cite{Conti2017pseudo} for the family of bi-variate pseudo-splines with dilation $M=2 I_2$. Our goal, for compatibility of our numerical experiments with approximating and interpolating grid transfer operators, is to define approximating schemes that have the same support as the interpolating ones and matching polynomial generation properties. We do not claim to have constructed a new family of anisotropic pseudo-splines.

Our paper is organized as follows. In Section \ref{sec:intro}, for convenience of the readers from both multigrid and subdivision communities, we first  give a brief introduction to multigrid and, afterwards, we recall some known facts about subdivision that are crucial for our analysis. 
Section \ref{sec:anisotropic_interp} is devoted to the construction and analysis of a family of anisotropic interpolatory subdivision schemes with dilation matrices in \eqref{eq:dilation}.  For suitable choices of $m$ in \eqref{eq:dilation} see numerical examples in Section \ref{sec:examples}. 
In Section \ref{sec:anisotropic_pseudo}, we present a comparable family of approximating anisotropic subdivision schemes and study their polynomial reproduction and generation properties. Numerical examples are given in Section \ref{sec:examples}.
We summarize our results and outline possible future research directions in Section \ref{sec:conclusion}.

%%%%%%%%%%%%%%%%%%%%%%%%%%%%%%%%%%%%%%%%%%%%%%%%%%%%%%%%%%%%%%%%%%%%%%%
\section{Background on multigrid and subdivision}
\label{sec:intro}
%%%%%%%%%%%%%%%%%%%%%%%%%%%%%%%%%%%%%%%%%%%%%%%%%%%%%%%%%%%%%%%%%%%%%%%

In subsection \ref{subsec:intro_mgm} we familiarize the reader with geometric multigrid. Then, in subsection \ref{subsec:intro_subdivision} we introduce multivariate subdivision.

%%%%%%%%%%%%%%%%%%%%%%%%%%
\subsection{A short introduction to geometric multigrid}
\label{subsec:intro_mgm}
%%%%%%%%%%%%%%%%%%%%%%%%%%

Multigrid methods are iterative methods for solving linear systems of the form 
\begin{equation} \label{eq:sys}
A_n \mathbf{x} = \mathbf{b}_n, \qquad A_n \in \CC^{n \times n}, \qquad \mathbf{b}_n \in \CC^n, \qquad n \in \NN,
\end{equation}
where often $A_n$ is assumed to be symmetric and positive definite. A basic Two-Grid Method (TGM) combines the action of a smoother and a Coarse Grid Correction (CGC): the smoother is often a simple iterative method such as Gauss-Seidel or weighted Jacobi; the CGC amounts to solving the residual equation exactly on a coarser grid.
A V-cycle multigrid method solves the residual equation approximately within the recursive application of the two-grid method, until the coarsest level is reached and there the resulting small system of equations is solved exactly.

The system matrix $A_n$ in \eqref{eq:sys} is usually derived via discretization of a $d$-dimensional elliptic PDE problem
\begin{equation} \label{eq:PDEs}
\begin{cases}
(-1)^d { \displaystyle \sum_{i=1}^d \frac{d^{2q}}{dx^{2q}_i} } \psi (\mathbf{x}) = h (\mathbf{x}) & \mathbf{x} \in \Omega = [0,1)^d, \quad q \geq 1 \\
\text{boundary conditions on } \partial \Omega.
\end{cases}
\end{equation}
Thus, $A_n$ has a certain structure depending on the discretization method and on the boundary conditions of the problem. In the case of the finite difference discretization and Dirichlet boundary conditions, the system matrix $A_n \in \CC^{n \times n}$ is multilevel Toeplitz \cite{Serra1999positive}.
In the following, we define the main ingredients of the geometric V-cycle method for multilevel Toeplitz matrices using the notation in \cite{ETNA2015} \footnote{In \cite{ETNA2015}, the authors give a complete description of the algebraic V-cycle method for multilevel Toeplitz matrices. Nevertheless, we use their notation since, in our setting, geometric and algebraic V-cycle methods differ only in the definition of the system matrices $A_{n_j}$. }. Notice that the smoother is not included in the following presentation, since it is well-known that iterative methods such as Gauss-Seidel, weighted Jacobi and weighted Richardson with an appropriate choice of the weights satisfy the condition required for convergence and optimality of multigrid (see e.g.~\cite{V-cycle_opt,lemma_smoother}).  We refer to \cite{Brandt1982guide, Hackbusch1985multi, Wesseling1987linear} for a complete description of the geometric multigrid algorithm and for its convergence and optimality analysis.

Let 
\[
\begin{array}{cc}
\begin{array}{ccccc}
\mathbf{m} &=& (m_1, \ldots, m_d) & \in \NN^d, & m_i \geq 2, \\
\mathbf{k} &=& (k_1, \ldots, k_d) & \in \NN^d, & k_i > 1, 
\end{array}
& \qquad   i = 1, \ldots, d,
\end{array}
\]
and $\ell \in \NN, \, 1 \leq \ell \leq { \displaystyle \min_ {i=1, \ldots, d} } k_i - 1$. Let
\[
\mathbf{n}_j = (m_1^{k_1-j}-1, \ldots, m_d^{k_d-j}-1), \qquad j = 0, \ldots, \ell,
\]
be the $j$-th grid of the V-cycle with $(n_j)_i+1 = m_i^{k_i-j}$ subintervals of size $(h_j)_i = m_i^{j-k_i}$ in each coordinate direction $i=1, \ldots, d$.
We recall that for $\ell = 1$ the V-cycle method reduces to the TGM, since it consists only of a fine grid $\mathbf{n}_0$ and a coarse grid $\mathbf{n}_1$.

\noindent For $j=0, \ldots, \ell$, the system matrices
\[
A_{\mathbf{n}_{j}} \in \CC^{\tilde{n}_{j} \times \tilde{n}_{j}}, \qquad \tilde{n}_j = \prod_{i=1}^d (n_j)_i,
\]
at level $j$ are obtained via finite difference discretization of \eqref{eq:PDEs} on the $j$-th grid $\mathbf{n}_j$ imposing Dirichlet boundary conditions. By construction, $A_{\mathbf{n}_j}$ is multilevel Toeplitz \cite{Serra1999positive}.
It is well-known that the entries of the multilevel Toeplitz matrices $A_{\mathbf{n}_j} = T_{\mathbf{n}_j} (f_j) $ are defined by the Fourier coefficients $\set{\mathrm{a}_j (\bm{\alpha}) \in \RR \, : \, \bm{\alpha} \in \ZZ^d}$
of the trigonometric polynomials $f_j \colon [0, 2 \pi )^d \to \CC$
\[
 f_j(\mathbf{x}) = \sum_{\substack{ \bm{\alpha} \in \ZZ^d \\ \abs{\bm{\alpha}} \leq c} } \mathrm{a}_j (\bm{\alpha}) \, e^{ - \mathrm{i} \bm{\alpha} \cdot \mathbf{x}}, \qquad \bm{\alpha} \cdot \mathbf{x} = \sum_{i=1}^d \alpha_i x_i, \qquad \mathbf{x} \in [0, 2\pi)^d,
\]
of total degree $c < \tilde{n}_j$. In general, the trigonometric polynomials $f_j$ depend on the discretization method. For example, for the $d$-variate elliptic problem of order $2q$ \eqref{eq:PDEs} and finite difference discretization, we have 
\[
f_j^{(q)}(\mathbf{x}) = \sum_{i=1}^d \frac{1}{(h_j)_i^{2q}} (2 - 2 \cos (x_i))^q = \sum_{i=1}^d \frac{1}{(h_j)_i^{2q}} (2 - e^{\mathrm{i} x_i} - e^{-\mathrm{i} x_i})^q, \qquad \mathbf{x}=(x_1, \ldots, x_d) \in [0,2 \pi)^d.
\]
Notice that the trigonometric polynomials $f_j^{(q)}$ are symmetric and vanish only at $\mathbf{0} \in \RR^d$ with order $2q$.
The properties of the matrices $A_{\mathbf{n}_j} = T_{\mathbf{n}_j} (f_j)$ are encoded in the trigonometric polynomials $f_j$. For example \cite{Tyrthyshnikov1996}, for $j = 0, \ldots, \ell$, $T_{\mathbf{n}_j}(f_j)$ is positive definite if $f_j \geq 0$, but $f_j$ not identically zero.

\noindent Another ingredient of the V-cycle method is the so-called \emph{grid transfer operator}. For $j=0, \ldots, \ell-1$, the grid transfer operator $P_{\mathbf{n}_j}$ at level $j$ is defined by
\begin{equation} \label{eq:vcycle_notation}
 P_{\mathbf{n}_j} = T_{\mathbf{n}_j} (p) \, \mathcal{K}_{\mathbf{n}_j,\mathbf{m}}^T \in \RR^{\tilde{n}_j \times \tilde{n}_{j+1}}, 
\end{equation}
where $p$ is a certain real trigonometric polynomial and $\mathcal{K}_{\mathbf{n}_j,\mathbf{m}} \in \NN^{\tilde{n}_{j+1} \times \tilde{n}_j}$ is the multilevel \emph{downsampling matrix} with the factor $\mathbf{m}$
\begin{equation} \label{eq:down_sampl}
\begin{split}
&\mathcal{K}_{\mathbf{n}_j,\mathbf{m}} = K_{(n_j)_1, m_1} \otimes \ldots \otimes  K_{(n_j)_d, m_d}, \\
&K_{(n_j)_i,m_i} = \quad \begin{bmatrix}
0_{m_i-1} & 1 & 0_{m_i-1} \\
& & & 1 & 0_{m_i-1} \\
& & & & & \ddots & \\
& & & & & & 1 & 0_{m_i-1}
\end{bmatrix} \, \in \NN^{(n_{j+1})_i \times (n_{j})_i} \qquad i =1, \ldots, d.
\end{split}
\end{equation}
Here, $0_{m_i-1} = (0, \ldots, 0) \in \NN_0^{1 \times m_i-1}$ is the zero row vector of length $m_i-1$. We are now ready to define the V-cycle algorithm.

Let $\smoother{\mathbf{n}_j}{pre}{\nu_{\text{pre}}}, \smoother{\mathbf{n}_j}{post}{\nu_{\text{post}}}$, $j = 0, \ldots, \ell$, be appropriate pre- and post-smoothers (\cite{V-cycle_opt,lemma_smoother}), and $\nu_{\text{pre}}, \, \nu_{\text{post}} \in \NN_0$ be the numbers of pre- and post-smoothing steps.
The V-cycle method (VCM) generates a sequence of iterates $\{ \mathbf{x}_{\tilde{n}_0}^{(h)} \in \RR^{\tilde{n}_0} \, : \, h \in \NN\}$ defined by
\[
\mathbf{x}_{\tilde{n}_0}^{(h+1)} = \text{VCM} (P_{\mathbf{n}_0}, A_{\mathbf{n}_0}, \mathbf{b}_{\tilde{n}_0}, 0) (\mathbf{x}_{\tilde{n}_0}^{(h)}),
\]
where the mapping $\hbox{VCM}: \RR^{\tilde{n}_0} \rightarrow \RR^{\tilde{n}_0}$ is defined iteratively by

\begin{equation} \label{alg:mgm}
\begin{array}{l}
\text{VCM} (P_{\mathbf{n}_j }, A_{\mathbf{n}_j}, \mathbf{b}_{\tilde{n}_j}, j) (\mathbf{x}_{\tilde{n}_j}^{(h)}) \\
\hline \\
\text{If } j=\ell \text{ then solve } A_{\mathbf{n}_{\ell}} \mathbf{x}_{\tilde{n}_{\ell}}^{(h+1)} = \mathbf{b}_{\tilde{n}_{\ell}} \\
\text{Else} \\
\begin{array}{lll}
1. & \text{Pre-smoother:} & \mathbf{\tilde{x}}_{\tilde{n}_j} = {\cal V}_{{\mathbf{n}_j},\text{pre}}^{\nu_{\text{pre}}}(\mathbf{x}_{\tilde{n}_j}^{(h)}) \\
\smallskip
2. & \text{Residual on the $j$-th grid:} & \mathbf{r}_{\tilde{n}_j} =  \mathbf{b}_{\tilde{n}_j} - A_{\mathbf{n}_j} \mathbf{\tilde{x}}_{\tilde{n}_j} \in \CC^{\tilde{n}_j} \\
\smallskip
3. & \text{Projection of residual on the $(j+1)$-th grid:} & \mathbf{r}_{\tilde{n}_{j+1}} = \Bigl ( \prod_{i=1}^d m_i \Bigr )^{-1} P_{\mathbf{n}_j}^T \mathbf{r}_{\tilde{n}_j} \in \CC^{\tilde{n}_{j+1}} \\
\smallskip
4. & \text{Recursion:} &  \\
\smallskip
    & \quad \mathbf{x}_{\tilde{n}_{j+1}}^{(h+1)} = 0 & \\
\smallskip
    & \quad \mathbf{x}_{\tilde{n}_{j+1}}^{(h+1)} = \text{VCM} (P_{\mathbf{n}_{j+1}}, A_{\mathbf{n}_{j+1}}, \mathbf{r}_{\tilde{n}_{j+1}} j+1) (\mathbf{x}_{\tilde{n}_{j+1}}^{(h+1)}) & \\
\smallskip
5. & \text{Correction of the previous smoothed iteration:} & \mathbf{\hat{x}}_{\tilde{n}_j} = \mathbf{\tilde{x}}_{\tilde{n}_j} + P_{\mathbf{n}_j}  \mathbf{x}_{\tilde{n}_{j+1}}^{(h+1)} \in \CC^{\tilde{n}_{j}} \\
\smallskip
6. & \text{Post-smoother:} & \mathbf{x}_{\tilde{n}_{j}}^{(h+1)} = {\cal V}_{\mathbf{n}_j,\text{post}}^{\nu_{\text{post}}} (\mathbf{\hat{x}}_{\tilde{n}_j})
\end{array}
\end{array}
\end{equation}

\noindent Notice that, at \emph{Step 3} of the V-cycle algorithm, the restriction operator is defined by $ \Bigl ( \prod_{i=1}^d m_i \Bigr )^{-1} P_{\mathbf{n}_j}^T$ (Galerkin approach). The scaling factor $c_{\mathbf{m}} = \Bigl ( \prod_{i=1}^d m_i \Bigr )^{-1}$ is necessary for the convergence of the method \cite{Hemker1990}.
For the sake of simplicity, we depict the iterative structure of the V-cycle in the following figure.
{\scriptsize
\begin{displaymath}
\xymatrix{
\CC^{\tilde{n}_0} \ar[dr]_{c_{\mathbf{m}} P_{\mathbf{n}_0}^T} & & & & & & \CC^{\tilde{n}_0} \\
& \CC^{\tilde{n}_1} \ar[dr]_{c_{\mathbf{m}} P_{\mathbf{n}_1}^T} & & & & \CC^{\tilde{n}_1} \ar[ur]_{P_{\mathbf{n}_0}} & \\
& & \CC^{\tilde{n}_2} \ar@{.>}[dr]_{c_{\mathbf{m}} P_{\mathbf{n}_{\ell-1}}^T} & & \CC^{\tilde{n}_2} \ar[ur]_{P_{\mathbf{n}_1}} & & \\
& & & \CC^{\tilde{n}_{\ell}} \ar@{.>}[ur]_{ P_{\mathbf{n}_{\ell-1}}} & & & }
\end{displaymath}
}

For $\mathbf{m}=(2,\ldots,2) \in \ZZ^d$, it is well-known (\cite{Brandt1982guide, Hackbusch1985multi, Hemker1990, Wesseling1987linear}) that a sufficient condition for convergence and optimality of the V-cycle method \eqref{alg:mgm} for elliptic PDEs problem of the form \eqref{eq:PDEs} is that the real trigonometric polynomial $p$ associated to the grid transfer operators $P_{\mathbf{n}_j}$, $j=0, \ldots, \ell-1$, in \eqref{eq:vcycle_notation} satisfies
\[
\begin{array}{cll}
\smallskip
(i) & { \displaystyle \limsup_{\mathbf{x} \in \RR^d, \, \norm{\mathbf{x}} \to 0} \frac{p(\mathbf{x} + 2 \pi \mathbf{y})}{\norm{\mathbf{x}}^{2q}}} < \infty, & \quad \forall \, \mathbf{y} = (y_1, \ldots, y_d) \in \QQ^d \, : \,y_i \in \Set{0,\frac12}, \, i=1, \ldots, d, \, \mathbf{y} \neq \bm{0}, \\
(ii) & { \displaystyle \limsup_{\mathbf{x} \in \RR^d, \, \norm{\mathbf{x}} \to 0} \frac{p(\mathbf{x}) - 2^d}{\norm{\mathbf{x}}}} < \infty. &
\end{array}
\]
This result can be extended to our general setting with $\mathbf{m} \in \NN^d$, by requiring
\begin{equation} \label{eq:opt_Vcycle_conditions}
\begin{array}{cll}
\smallskip
(i) & { \displaystyle \limsup_{\mathbf{x} \in \RR^d, \, \norm{\mathbf{x}} \to 0} \frac{p(\mathbf{x} + 2 \pi \mathbf{y})}{\norm{\mathbf{x}}^{2q}}} < \infty, & \quad \forall \, \mathbf{y} = (y_1, \ldots, y_d) \in \QQ^d \, : \,y_i \in \Set{0,\frac{1}{m_i}, \ldots, \frac{m_i-1}{m_i}}, \, i=1, \ldots, d, \, \mathbf{y} \neq \bm{0}, \\
(ii) & { \displaystyle \limsup_{\mathbf{x} \in \RR^d, \, \norm{\mathbf{x}} \to 0} \frac{p(\mathbf{x}) - \prod_{i=1}^d m_i}{\norm{\mathbf{x}}}} < \infty. &
\end{array}
\end{equation}

\begin{remark} \label{remark:mgm_subd}
We focus our attention on the grid transfer operators $P_{\mathbf{n}_j}$, $j=0, \ldots, \ell-1$, in \eqref{eq:vcycle_notation}. From the Fourier coefficients of the real trigonometric polynomial $p$ associated to the multilevel Toeplitz matrix $T_{\mathbf{n}_j} (p)$, one can build a finite sequence of real numbers $\mathbf{p} \in \ell_0 (\ZZ^d)$. Then, the multiplication by the matrix $P_{\mathbf{n}_j}$  is done by 
\begin{enumerate}
\item upsampling with the factor $\mathbf{m}$ via multiplication by $\mathcal{K}_{\mathbf{n}_j,\mathbf{m}}^T$,
\item convolution with $\mathbf{p}$ via multiplication by $T_{\mathbf{n}_j} (p)$.
\end{enumerate}
It is well-known that upsampling and convolution amount to one step of subdivision scheme with dilation $M=\underset{i=1, \ldots, d}{{\rm diag}} m_i \in \NN^{d \times d}$  and mask $\mathbf{p}$. It is then natural to study conditions on subdivision schemes that
will guarantee convergence and optimality of the corresponding multigrid methods.
\end{remark}

%%%%%%%%%%%%%%%%%%%%%%%%%%
\subsection{Subdivision schemes}
\label{subsec:intro_subdivision}
%%%%%%%%%%%%%%%%%%%%%%%%%%

In the following, for convenience of readers from multigrid community, we shortly describe $d$-variate subdivision schemes and list the well-known results on their convergence, polynomial generation property and stability. The results we present here are used in Sections \ref{sec:anisotropic_interp} and \ref{sec:anisotropic_pseudo}, where we define two new families of bivariate anisotropic subdivision schemes and analyze their properties. The link between multigrid and subdivision via observation in Remark \ref{remark:mgm_subd} is presented in Section \ref{sec:examples}.

Let $\mathbf{m} = (m_1, \ldots, m_d) \in \NN^d$, $m_i \geq 2$, $i=1, \ldots, d$.
The diagonal matrix $M =  \underset{i=1, \ldots, d}{\mathrm{diag}} m_i \in \NN^{d \times d}$ is a dilation matrix, as all its eigenvalues are in the absolute value greater than 1.
Let $\mathbf{p} = \{ \mathrm{p} (\bm{\alpha}) \in \RR \, : \, \bm{\alpha} \in \ZZ^d \} \in \ell_0 (\ZZ^d)$ be a finite sequence of real numbers.
The \emph{dilation} $M$ and the \emph{mask} $\mathbf{p}$ are used to define the subdivision operator $\mathcal{S}_{\mathbf{p}} \colon \ell(\ZZ^d) \to \ell (\ZZ^d)$, which is a linear operator such that
\[
\left ( \mathcal{S}_{\mathbf{p}} \mathbf{c} \right ) (\bm{\alpha}) = \sum_{\bm{\beta} \in \ZZ^d} \mathrm{p} (\bm{\alpha} - M \bm{\beta}) \mathrm{c} (\bm{\beta}), \quad \alpha \in \ZZ^d, \quad \mathbf{c} \in \ell (\ZZ^d).
\]
A subdivision scheme $S_{\mathbf{p}}$ with dilation $M$ and mask $\mathbf{p}$ is the recursive application of the subdivision operator $\mathcal{S}_{\mathbf{p}}$ to some initial data sequence
$\mathbf{c}^{(0)} = \{ \mathrm{c}^{(0)} (\bm{\alpha}) \in \RR \, : \, \bm{\alpha} \in \ZZ^d \} \in \ell (\ZZ^d)$, namely
\begin{equation}
 \label{eq:subdivision}
 \mathbf{c}^{(k+1)} = \mathcal{S}_{\mathbf{p}} \mathbf{c}^{(k)}, \quad k \in \NN_0.
\end{equation}
Notice that $\mathbf{c}^{(k+1)} = \mathcal{S}_{\mathbf{p}} \mathbf{c}^{(k)} = \cdots = (\mathcal{S}_{\mathbf{p}})^{k+1} \mathbf{c}^{(0)}$.

Since the subdivision scheme $S_{\mathbf{p}}$ generates sequences $\mathbf{c}^{(k)} \in \ell (\ZZ^d)$, $k \geq 0$, a natural way to define a notion of its convergence is to attach the data $\mathbf{c}^{(k)} = \set{\mathrm{c}^{(k)} (\bm{\alpha}) \, : \, \bm{\alpha} \in \ZZ^d }$, $k \geq 0$, to the parameter values $ \mathbf{t}^{(k)} = \set{ M^{-k} \bm{\alpha} \, : \, \bm{\alpha} \in \ZZ^d }$, $k \geq 0$, and to require that there exists a continuous function $F_{\mathbf{c}^{(0)}} \colon \RR^d \to \RR$ depending on the starting sequence $\mathbf{c}^{(0)}$ such that the values of $F_{\mathbf{c}^{(0)}}$ at the parameter values $\mathbf{t}^{(k)}$ are ''close'' enough to the data $\mathbf{c}^{(k)}$ for $k$ sufficiently large.

\begin{definition}
A subdivision scheme $S_{\mathbf{p}}$ is \emph{convergent} if for any initial data $\mathbf{c} \in \ell^{\infty} (\ZZ^d)$ there exist a uniformly continuous function
$F_{\mathbf{c}} \in C(\RR^d)$ such that
\[
\lim_{k \to \infty} \, \sup_{\bm{\alpha} \in \ZZ^d} \quad \abs*{\, F_{\mathbf{c}} \left ( M^{-k} \bm{\alpha} \right ) - \left ( \mathcal{S}_{\mathbf{p}}^k \mathbf{c} \right ) (\bm{\alpha}) \, } = 0.
\]
\end{definition}

The particular choice of the initial data $\bm{\delta} = \Set{\delta_{\bm{\alpha},0} \, : \, \bm{\alpha} \in \ZZ^d}$ defines the so-called \emph{basic limit function} $\phi = F_{\bm{\delta}}$. Since the mask $\mathbf{p} \in \ell_0 (\ZZ^d)$ is a finite sequence, $\phi$ is compactly supported. It is well-known that the basic limit function $\phi$ satisfies the refinement equation
\begin{equation} \label{eq:ref_eq}
\phi  = \sum_{\bm{\alpha} \in \ZZ^d} \mathrm{p} (\bm{\alpha}) \phi (M \, \cdot \, - \bm{\alpha}).
\end{equation}
Thus, due to the linearity of $\mathcal{S}_{\mathbf{p}}$, for any initial data $\mathbf{c} \in \ell^{\infty}(\ZZ^d)$, $ \mathbf{c} = \displaystyle \sum_{\bm{\alpha} \in \ZZ^d} \mathrm{c} (\bm{\alpha}) \bm{\delta}(\cdot - \bm{\alpha})$, we have 
\[
F_{\mathbf{c}} = \lim_{k \to \infty} \mathcal{S}_{\mathbf{p}}^k \mathbf{c} = \sum_{\bm{\alpha} \in \ZZ^d} \mathrm{c} (\bm{\alpha}) \phi(\cdot - \bm{\alpha}).
\]
For more details on the properties of the basic limit function, see the seminal work of Cavaretta et al. \cite{Cava_Dam_Micc} and the survey by Dyn and Levin \cite{Dyn_Lev}.

Most of the properties of the subdivision scheme $S_{\mathbf{p}}$ can be investigated studying the Laurent polynomial
\begin{equation} \label{eq:symbol_trigo_poly}
p (\mathbf{z}) = \sum_{\bm{\alpha} \in \ZZ^d} \mathrm{p} (\bm{\alpha}) \mathbf{z}^{\bm{\alpha}}, \qquad \mathbf{z}^{\bm{\alpha}} = z_1^{\alpha_1} \cdot \ldots \cdot z_d^{\alpha_d}, \qquad \mathbf{z} \in  ( \CC \setminus \set{0} )^d,
\end{equation}
called the \emph{symbol} of the subdivision scheme.

In Section \ref{sec:anisotropic_interp}, we are interested in interpolatory subdivision schemes. 
We say that a subdivision scheme $S_{\mathbf{p}}$ with dilation $M$ and mask $\mathbf{p}$ is \emph{interpolatory} if its mask $\mathbf{p}$ satisfies
\begin{equation} \label{eq:mask_interp}
\mathrm{p} (\bm{0}) = 1 \qquad \text{and} \qquad \mathrm{p} (M \bm{\alpha}) = 0, \qquad \forall \, \bm{\alpha} \neq 0.
\end{equation}

\noindent In \cite{Conti_Cotronei_Sauer_interp_multivariate}, in the case of dilation matrix $2 I_d$, where $I_d$ is the identity matrix of dimension $d \times d$, the authors characterized the interpolation property of subdivision in terms of the corresponding subdivision symbol. Their result can be easily extended to the case of diagonal anisotropic dilation matrix $M$. We denote by $\Gamma$ the complete set of representatives of the distinct cosets of $\ZZ^d / M \ZZ^d$, namely
\begin{equation} \label{eq:Gamma}
 \Gamma = \Set{ \bm{\gamma}=(\gamma_1, \ldots, \gamma_d) \in \ZZ^d \, : \, \gamma_i \in \set{0, \ldots, m_i-1}, \, i = 1, \ldots, d},
\end{equation}
and we define the set
\[
 E_M = \Set{e^{- \mathrm{i} 2 \pi M^{-1} \bm{\gamma}} \, : \, \bm{\gamma} \in \Gamma},
\]
containing $\mathbf{1} = (1, \dots, 1) \in \ZZ^d$. 

\begin{theorem} \label{t:interp_symbol}
A convergent subdivision scheme $S_{\mathbf{p}}$ is interpolatory if and only if
\[
\sum_{ \bm{\xi} \in E_M} p \bigl ( \bm{\xi} \cdot \bm{z} \bigr ) =  \abs{\text{det } M }, \qquad  \bm{\xi} \cdot \bm{z} = (\xi_1 z_1, \ldots, \xi_d z_d), \qquad \bm{z} \in (\CC \setminus \set{0})^d.
\]
\end{theorem}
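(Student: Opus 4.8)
The plan is to reduce the claimed symbol identity, via the standard coset (polyphase) decomposition with respect to the sublattice $M\ZZ^d$, to the defining mask condition \eqref{eq:mask_interp}; both implications then follow at once from a single elementary computation. Convergence of $S_{\mathbf{p}}$ plays no role in this algebraic equivalence — it is merely the standing hypothesis under which the interpolation property is of interest, and under which, through the refinement equation \eqref{eq:ref_eq} evaluated on $M\ZZ^d$, condition \eqref{eq:mask_interp} is equivalent to $\phi(\bm{\alpha})=\delta_{\bm{\alpha},\bm{0}}$, in the spirit of \cite{Conti_Cotronei_Sauer_interp_multivariate}.

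First I would record a ``sampling'' identity for $E_M$. Writing, for $\bm{\gamma}\in\Gamma$, $\bm{\xi}_{\bm{\gamma}}=e^{-\mathrm{i}2\pi M^{-1}\bm{\gamma}}$, one has $(\bm{\xi}_{\bm{\gamma}}\cdot\mathbf{z})^{\bm{\alpha}}=e^{-\mathrm{i}2\pi\,\bm{\gamma}^{T}M^{-1}\bm{\alpha}}\,\mathbf{z}^{\bm{\alpha}}$ for every $\bm{\alpha}\in\ZZ^{d}$. Since $M=\mathrm{diag}(m_1,\dots,m_d)$ and $\Gamma$ in \eqref{eq:Gamma} is a full product set of coset representatives, summing over $\bm{\gamma}\in\Gamma$ factors into $d$ finite geometric sums $\sum_{\gamma_i=0}^{m_i-1}e^{-\mathrm{i}2\pi\gamma_i\alpha_i/m_i}$, each equal to $m_i$ if $m_i\mid\alpha_i$ and to $0$ otherwise. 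Hence
\[
\sum_{\bm{\xi}\in E_M}(\bm{\xi}\cdot\mathbf{z})^{\bm{\alpha}}=\begin{cases}\abs{\det M}\,\mathbf{z}^{\bm{\alpha}}, & \bm{\alpha}\in M\ZZ^{d},\\ 0, & \bm{\alpha}\notin M\ZZ^{d}.\end{cases}
\]

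Applying this term by term to $p(\mathbf{z})=\sum_{\bm{\alpha}\in\ZZ^d}\mathrm{p}(\bm{\alpha})\mathbf{z}^{\bm{\alpha}}$ — a finite sum, so the interchange of summations is legitimate — yields
\[
\sum_{\bm{\xi}\in E_M}p(\bm{\xi}\cdot\mathbf{z})=\abs{\det M}\sum_{\bm{\beta}\in\ZZ^{d}}\mathrm{p}(M\bm{\beta})\,\mathbf{z}^{M\bm{\beta}},\qquad \mathbf{z}\in(\CC\setminus\{0\})^d.
\]
Thus the identity in the theorem is equivalent to $\sum_{\bm{\beta}\in\ZZ^d}\mathrm{p}(M\bm{\beta})\,\mathbf{z}^{M\bm{\beta}}\equiv 1$ on $(\CC\setminus\{0\})^d$. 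Because the Laurent monomials $\{\mathbf{z}^{M\bm{\beta}}:\bm{\beta}\in\ZZ^d\}$ are pairwise distinct, hence linearly independent, this holds precisely when $\mathrm{p}(\bm{0})=1$ and $\mathrm{p}(M\bm{\beta})=0$ for all $\bm{\beta}\neq\bm{0}$, i.e. when \eqref{eq:mask_interp} holds — that is, when $S_{\mathbf{p}}$ is interpolatory. Reading this chain of equivalences in both directions proves the theorem.

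I do not expect a genuine obstacle here: the whole content is the character-sum identity for $E_M$, and everything else is bookkeeping. The two points deserving a line of care are the factorisation of the sum over $\Gamma$ into one geometric sum per coordinate (which uses that $M$ is diagonal and that $\Gamma$ is a product set), and the passage from ``$\sum_{\bm{\xi}\in E_M}p(\bm{\xi}\cdot\mathbf{z})$ equals the constant $\abs{\det M}$'' to the vanishing of every sublattice mask coefficient except the one at the origin, which is exactly the linear independence of distinct Laurent monomials.
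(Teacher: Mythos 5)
Your proof is correct. Note that the paper does not actually prove Theorem \ref{t:interp_symbol}: it states it as an easy extension of the characterization in \cite{Conti_Cotronei_Sauer_interp_multivariate} for dilation $2I_d$, so there is no in-paper argument to compare against. Your derivation --- the character-sum identity $\sum_{\bm{\xi}\in E_M}(\bm{\xi}\cdot\mathbf{z})^{\bm{\alpha}} = \abs{\det M}\,\mathbf{z}^{\bm{\alpha}}$ for $\bm{\alpha}\in M\ZZ^d$ and $0$ otherwise, followed by linear independence of the monomials $\mathbf{z}^{M\bm{\beta}}$ --- is the standard polyphase argument, it is complete, and you are right that convergence is irrelevant to the equivalence since the paper defines ``interpolatory'' purely by the mask condition \eqref{eq:mask_interp}.
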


We now introduce the concepts of polynomial generation and reproduction.
The property of generation of polynomials of degree $n$ is the capability of a subdivision scheme to generate the full space of polynomials up to degree $n$, while the property of reproduction of polynomials of degree $n$  is the capability of a subdivision scheme to produce in the limit exactly the same polynomial from which the data is sampled. It is easy to see that reproduction of polynomials of degree $n$ implies generation of polynomials of degree $n$.
We denote by $\Pi_n$ the space of polynomials of total degree less than or equal to $n \in \NN_0$.

\begin{definition} \label{d:poly_generation}
A convergent subdivision scheme $S_{\mathbf{p}}$ \emph{generates} polynomials up to degree $n$ if
\[
\text{for any } \, \pi \in \Pi_{n}, \quad \exists \, \mathbf{c} \in \ell (\ZZ^d) \quad \text{such that} \quad \sum_{\bm{\alpha} \in \ZZ^d} \mathrm{c} (\bm{\alpha}) \phi(\cdot - \bm{\alpha}) = \tilde{\pi} \in \Pi_{n}.
\]
\end{definition}
The property of polynomial generation has been studied e.g. by  Cabrelli at al. in \cite{Cabrelli2004self}, Cavaretta et al. in \cite{Cava_Dam_Micc}, Jetter and Plonka in \cite{Jetter2001shift}, Jia in \cite{Jia1996subdivision, Jia_approx_prop}, Levin in \cite{Levin_gen_non_unif}. Definition \ref{d:poly_generation} can be interpreted as follows: a convergent subdivision scheme $S_{\mathbf{p}}$ \emph{generates} polynomials up to degree $n$ if the integer shifts of its basic limit function $\set{\phi (\cdot - \bm{\alpha}) \, : \, \bm{\alpha} \in \ZZ^d}$ span the space $\Pi_n$.

Algebraic properties of the symbol $p$ characterize the polynomial generation property of subdivision.
For $\bm{\mu} \in \NN_0^d$, we denote by $D^{\bm{\mu}}$ the $\bm{\mu}$-th directional derivative and $\abs{\bm{\mu}} := \mu_1 + \ldots + \mu_d$.

\begin{theorem}\label{t:poly_generation}
Let $n \in \NN_0$. A convergent subdivision scheme $S_{\mathbf{p}}$ generates polynomials up to degree $n$ if and only if
\begin{equation} \label{conditions:poly_generation}
 D^{\bm{\mu}} p (\bm{\varepsilon}) = 0, \qquad \forall \, \bm{\varepsilon} \in E_M \setminus \set{\mathbf{1}}, \qquad \bm{\mu} \in \NN_0^d, \qquad \abs{\bm{\mu}} \leq n.
\end{equation}
\end{theorem}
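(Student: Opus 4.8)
The plan is to move to the Fourier side, where both polynomial generation and the symbol conditions \eqref{conditions:poly_generation} become transparent, and to reduce to the prototype $M = 2I_d$ already available in the literature. Write $\widehat{\phi}$ for the Fourier transform of the compactly supported basic limit function $\phi$. Transforming the refinement equation \eqref{eq:ref_eq} and using that $M$ is diagonal yields
\[
\widehat{\phi}(\bm{\omega}) = \abs{\det M}^{-1}\, p\bigl(e^{-\mathrm{i} M^{-1}\bm{\omega}}\bigr)\,\widehat{\phi}(M^{-1}\bm{\omega}), \qquad \bm{\omega} \in \RR^d,
\]
with $p$ the symbol \eqref{eq:symbol_trigo_poly} and $e^{-\mathrm{i} M^{-1}\bm{\omega}} = \bigl(e^{-\mathrm{i}(M^{-1}\bm{\omega})_1}, \ldots, e^{-\mathrm{i}(M^{-1}\bm{\omega})_d}\bigr)$. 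Since $S_{\mathbf{p}}$ is convergent it obeys the basic sum rule $p(\mathbf{1}) = \abs{\det M}$, so $\widehat{\phi}(\bm{0}) = 1$ and, iterating the identity above, $\widehat{\phi}(\bm{\omega}) = \prod_{k \geq 1}\abs{\det M}^{-1} p(e^{-\mathrm{i} M^{-k}\bm{\omega}})$, a product that converges uniformly on compacta and is non-zero near $\bm{0}$. The external input I would use is the classical Strang--Fix / shift-invariant-space description of polynomial generation (see \cite{Cava_Dam_Micc, Jetter2001shift}): for compactly supported $\phi$ with $\widehat{\phi}(\bm{0}) \neq 0$, the integer shifts $\{\phi(\cdot - \bm{\alpha}) : \bm{\alpha} \in \ZZ^d\}$ span $\Pi_n$ — equivalently, $S_{\mathbf{p}}$ generates polynomials up to degree $n$ in the sense of Definition \ref{d:poly_generation} — precisely when $\widehat{\phi}$ has a zero of order at least $n+1$ at each point of $2\pi(\ZZ^d \setminus \{\bm{0}\})$. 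What remains is to show that this condition on $\widehat{\phi}$ is equivalent to \eqref{conditions:poly_generation}.

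For sufficiency, assume \eqref{conditions:poly_generation}. Fix $\bm{\beta} \in \ZZ^d \setminus \{\bm{0}\}$ and, using that $M$ is expansive, write $\bm{\beta} = M^{j}\bm{\beta}'$ with $j \geq 0$ as large as possible, so $\bm{\beta}' \in \ZZ^d \setminus M\ZZ^d$ and $\bm{\beta}' \neq \bm{0}$. In the infinite product for $\widehat{\phi}(2\pi\bm{\beta} + \bm{\eta})$ the factors of index $k \leq j$ equal $\abs{\det M}^{-1}p$ at arguments tending to $\mathbf{1}$ as $\bm{\eta} \to \bm{0}$, hence are continuous and tend to $1$; the factor of index $j+1$ equals $\abs{\det M}^{-1} p\bigl(\bm{\varepsilon} \cdot e^{-\mathrm{i} M^{-(j+1)}\bm{\eta}}\bigr)$ with $\bm{\varepsilon} = e^{-\mathrm{i} 2\pi M^{-1}\bm{\beta}'} \in E_M \setminus \{\mathbf{1}\}$, which by \eqref{conditions:poly_generation} and the chain rule is $O(\norm{\bm{\eta}}^{n+1})$; the remaining factors are bounded continuous functions of $\bm{\eta}$ and the tail converges uniformly. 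Hence $\widehat{\phi}$ has a zero of order at least $n+1$ at $2\pi\bm{\beta}$, and the Strang--Fix criterion gives generation. (Equivalently, one runs this as an induction on the level $j$ using a single refinement step.)

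Conversely, assume $S_{\mathbf{p}}$ generates $\Pi_n$; then $\widehat{\phi}$ has a zero of order at least $n+1$ at each point of $2\pi(\ZZ^d \setminus \{\bm{0}\})$. Fix $\bm{\varepsilon} = e^{-\mathrm{i} 2\pi M^{-1}\bm{\gamma}} \in E_M \setminus \{\mathbf{1}\}$, $\bm{\gamma} \in \Gamma \setminus \{\bm{0}\}$, so $\bm{\gamma} \notin M\ZZ^d$. Pick $\bm{\beta}^{*} \in \ZZ^d$ with $\widehat{\phi}(2\pi M^{-1}\bm{\gamma} + 2\pi\bm{\beta}^{*}) \neq 0$ and evaluate the refinement relation, written as $\widehat{\phi}(M\bm{\zeta}) = \abs{\det M}^{-1} p(e^{-\mathrm{i}\bm{\zeta}})\,\widehat{\phi}(\bm{\zeta})$, at $\bm{\zeta} = 2\pi M^{-1}\bm{\gamma} + 2\pi\bm{\beta}^{*} + \bm{\eta}$; since $M\bm{\zeta} = 2\pi(\bm{\gamma} + M\bm{\beta}^{*}) + M\bm{\eta}$ and $\bm{\gamma} + M\bm{\beta}^{*} \in \ZZ^d \setminus \{\bm{0}\}$, the left-hand side vanishes to order at least $n+1$ in $\bm{\eta}$, and dividing by $\widehat{\phi}(\bm{\zeta})$ — non-zero and continuous near $\bm{\eta} = \bm{0}$ — forces $\bm{\eta} \mapsto p\bigl(\bm{\varepsilon} \cdot e^{-\mathrm{i}\bm{\eta}}\bigr)$ to vanish to order at least $n+1$ at $\bm{\eta} = \bm{0}$, that is $D^{\bm{\mu}} p(\bm{\varepsilon}) = 0$ for $\abs{\bm{\mu}} \leq n$; as $\bm{\gamma}$ was arbitrary, \eqref{conditions:poly_generation} holds. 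The only point that is not pure bookkeeping is the existence of such a $\bm{\beta}^{*}$: if $\widehat{\phi}$ vanished on the whole coset $2\pi M^{-1}\bm{\gamma} + 2\pi\ZZ^d$, the periodization $\sum_{\bm{\beta}}\abs{\widehat{\phi}(\,\cdot\,+ 2\pi\bm{\beta})}^2$ would vanish at $2\pi M^{-1}\bm{\gamma}$, contradicting the stability of the integer shifts of $\phi$; and stability is guaranteed here exactly because $S_{\mathbf{p}}$ is a \emph{convergent} subdivision scheme, not merely because $\phi$ happens to be continuous. I expect this to be the delicate step of the proof.

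An alternative that avoids Strang--Fix works directly with $\mathcal{S}_{\mathbf{p}}$: one checks that \eqref{conditions:poly_generation} is equivalent, through the discrete Fourier transform over $\Gamma$ relating $\{p(\bm{\xi} \cdot \mathbf{z}) : \bm{\xi} \in E_M\}$ to the sub-symbols of $\mathbf{p}$, to the mask satisfying the sum rules of order $n+1$; that these make $\mathcal{S}_{\mathbf{p}}$ map data sampled from $\Pi_n$ to data sampled, on the refined grid $M^{-1}\ZZ^d$, from $\Pi_n$; and that, by convergence, $\sum_{\bm{\alpha}}\mathrm{c}(\bm{\alpha})\phi(\cdot - \bm{\alpha}) \in \Pi_n$ for polynomial $\mathbf{c}$, the induced endomorphism of $\Pi_n$ being triangular for the degree filtration with the identity on the leading part, hence bijective. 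This route is heavier on bookkeeping, so I would present the Fourier argument and cite \cite{Cava_Dam_Micc} for the template $M = 2I_d$.
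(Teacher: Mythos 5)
A preliminary remark: the paper gives no proof of Theorem \ref{t:poly_generation} — it is quoted as background with references to Cavaretta--Dahmen--Micchelli, Jia and Jetter--Plonka — so your attempt can only be judged on its own merits. Your sufficiency direction (from \eqref{conditions:poly_generation} to generation) is correct and is the standard argument: the infinite product $\widehat{\phi}(\bm{\omega})=\prod_{k\ge 1}\abs{\det M}^{-1}p(e^{-\mathrm{i}M^{-k}\bm{\omega}})$, the factorization $\bm{\beta}=M^{j}\bm{\beta}'$ with $\bm{\beta}'\notin M\ZZ^d$ isolating the one factor that vanishes to order $n+1$, and the easy half of Strang--Fix.

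The gap is in the necessity direction, exactly where you flagged it, and it is not cosmetic: convergence of $S_{\mathbf{p}}$ does \emph{not} imply stability (or linear independence) of the integer shifts of $\phi$, so you may not assume the existence of $\bm{\beta}^{*}$ with $\widehat{\phi}(2\pi M^{-1}\bm{\gamma}+2\pi\bm{\beta}^{*})\neq 0$, nor invoke the converse half of Strang--Fix. A concrete obstruction for $d=1$, $M=2$: take $p(z)=\frac{(1+z)^2(1+z^2)}{4z}$. Then $b(z)=p(z)/(1+z)$ has mask $\frac14(1,1,1,1)$, so $\norm{S_b}_\infty=\frac12<1$ and $S_{\mathbf{p}}$ is convergent; its basic limit function is $\phi=\frac12\bigl(Q+Q(\cdot-1)\bigr)$ with $Q$ the quadratic B-spline, hence $\widehat{\phi}$ vanishes on the whole coset $\pi+2\pi\ZZ$ (a periodic zero: the shifts are linearly dependent). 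Nevertheless the shifts of $\phi$ still span $\Pi_2$ in the sense of Definition \ref{d:poly_generation}, because $\sum_{\alpha}\mathrm{c}(\alpha)\phi(\cdot-\alpha)=\sum_{\beta}\tfrac12\bigl(\mathrm{c}(\beta)+\mathrm{c}(\beta-1)\bigr)Q(\cdot-\beta)$ and the relation $\mathrm{c}(\beta)+\mathrm{c}(\beta-1)=2\mathrm{d}(\beta)$ is solvable in $\ell(\ZZ)$ for any sequence $\mathrm{d}$; yet $p$ has only a double zero at $z=-1$, so \eqref{conditions:poly_generation} fails for $n=2$. Thus the ``only if'' implication requires the additional hypothesis of stable (or linearly independent) integer shifts, which is what the cited sources actually assume and which holds for the schemes constructed in this paper; your proof should either add that hypothesis (and then your argument, including the periodization step, goes through) or restrict the statement to the implication from \eqref{conditions:poly_generation} to generation, which is the only direction the paper uses.
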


Thus, the property of polynomial generation of a convergent subdivision scheme $S_{\mathbf{p}}$ is strictly related to the behavior of the subdivision symbol $p(\mathbf{z})$ and of its derivatives at the "special" points $E_M \setminus \set{\mathbf{1}}$.
Conditions in \eqref{conditions:poly_generation} are also known as \emph{zero conditions of order} $n+1$.
A subdivision symbol $p(\mathbf{z})$ satisfies the zero conditions of order $n+1$ if and only if the associated mask $\mathbf{p}$ satisfies the \emph{sum rules of order} $n+1$, namely
\begin{equation} \label{eq:sum_rules}
\sum_{\bm{\alpha} \in \ZZ^d} \mathrm{p} ( M \bm{\alpha} ) \pi ( M \bm{\alpha} ) = \sum_{\bm{\alpha} \in \ZZ^d} \mathrm{p} ( \bm{\gamma} + M \bm{\alpha}) \pi ( \bm{\gamma} + M \bm{\alpha} ), \qquad \forall \, \bm{\gamma} \in \Gamma, \, \pi \in \Pi_n. 
\end{equation}

In the univariate setting, Theorem \ref{t:poly_generation} is equivalent to requiring that the symbol $p(z)$ of the subdivision scheme  $S_{\mathbf{p}}$ of dilation $m \in \NN$, $m \geq 2$, has the following factorization
\begin{equation} \label{eq:symbol_factorization_generation}
p(z) = \left (1+z+z^2+\dots+z^{m-1} \right )^{n + 1} \, b(z),
\end{equation}
for some Laurent polynomial $b(z)$ such that $b(1) = m^{-n}$, i.e. $p(1)=m$.

In the bivariate setting, we lose the factorization property \eqref{eq:symbol_factorization_generation}. Nevertheless, Theorem \ref{t:poly_generation} can be reformulated in terms of ideals \cite{Sauer2002ideal}, leading to an equivalent decomposition property.
Let $n \in \NN_0$ and define 
\[
\mathcal{J}_n := \, < (1-z_1^{m_1})^{\mu_1} (1-z_2^{m_2})^{\mu_2} \, : \, \bm{\mu} = (\mu_1, \mu_2) \in \NN_0^2, \, \abs{\bm{\mu}} = n+1>. 
\]
$\mathcal{J}_n$ is the ideal of all bivariate polynomials $p(z_1,z_2)$ which satisfy
\[
D^{\bm{\mu}} p (\bm{\varepsilon}) = 0, \qquad \forall \bm{\varepsilon} \in E_M, \qquad \forall \bm{\mu} \in \NN_0^2, \qquad \abs{\bm{\mu}} \leq n.
\]
Thus, the quotient ideal
\[
\mathcal{I}_n \quad = \quad \mathcal{J}_n \quad : \quad <(1-z_1)^{\mu_1} (1-z_2)^{\mu_2} \, : \, \bm{\mu} = (\mu_1, \mu_2) \in \NN_0^2, \, \abs{\bm{\mu}} = n+1 >
\] 
 is the ideal of all bivariate polynomials $p(z_1,z_2)$ which satisfy \eqref{conditions:poly_generation}
Consequently, a convergent subdivision scheme generates polynomials up to degree $n$ if and only if its symbol $p \in \mathcal{I}_n$. Finally, if for $n, \ell \in \NN_0$, $p \in \mathcal{I}_n$ and $q \in \mathcal{I}_{\ell}$, then $p \cdot q \in \mathcal{I}_{n+\ell+1}$.

The definition of the polynomial reproduction property differs from the definition of the polynomial generation property as the before mentioned property depends on the so-called sequence of \emph{parameter values}.
Let $\bm{\tau} = (\tau_1, \ldots, \tau_d) \in \RR^d$. The parameter values $\mathbf{t}^{(k)} = \set{\mathbf{t}^{(k)} (\bm{\alpha}) \in \RR^d \, : \, \bm{\alpha} \in \ZZ^d }$, $k \geq 0$, are defined recursively by
\begin{equation} \label{eq:parameter_values}
\mathbf{t}^{(k)} (\bm{\alpha}) = \mathbf{t}^{(k)} (\bm{0}) + M^{-k} \bm{\alpha}, \qquad \mathbf{t}^{(k)} (\bm{0}) = \mathbf{t}^{(k-1)} (\bm{0}) - M^{-k} \bm{\tau},  \qquad \mathbf{t}^{(0)} (\bm{0}) = \bm{0}, \qquad \bm{\alpha} \in  \ZZ^d, \qquad k \geq 0.
\end{equation}

\begin{definition} \label{d:poly_reproduction}
A convergent subdivision scheme $S_{\mathbf{p}}$ \emph{reproduces} polynomials up to degree $n$  with respect to the parameter values \eqref{eq:parameter_values} if
\[
\text{for any } \, \pi \in \Pi_{n} \, \text{ and } \mathbf{c} = \Set{ \pi \bigl ( \mathbf{t}^{(0)} (\bm{\alpha}) \bigr ) \, : \, \bm{\alpha} \in \ZZ^d} \in \ell (\ZZ^d), \quad  \sum_{\bm{\alpha} \in \ZZ^d} \mathrm{c} (\bm{\alpha}) \phi(\cdot - \bm{\alpha}) = \pi \in \Pi_{n}.
\]
\end{definition}

\noindent Definition \ref{d:poly_reproduction} is more restrictive than Definition \ref{d:poly_generation} since we require that the subdivision limit is exactly the same polynomial $\pi$ from which the initial data $\mathbf{c}$ is sampled.
Charina et al. proved in \cite{Charina2014reproduction} that the property of polynomial reproduction is characterized in terms of the subdivision symbol.

\begin{theorem}\label{t:poly_reproduction}
Let $n \in \NN_0$. A convergent subdivision scheme $S_{\mathbf{p}}$ with parameter values \eqref{eq:parameter_values} reproduces polynomials up to degree $n$ if and only if
\begin{equation} \label{conditions:poly_reproduction}
 D^{\bm{\mu}} p (\bm{1}) = \abs{\text{det } M } \, \prod_{i=1}^d \, \prod_{\ell_i = 0}^{\mu_i-1} (\tau_i - \ell_i) \quad \text{and} \quad D^{\bm{\mu}} p (\bm{\varepsilon}) = 0, \quad \forall \, \bm{\varepsilon} \in E_M \setminus \set{\mathbf{1}}, \quad \bm{\mu} \in \NN_0^d, \quad \abs{\bm{\mu}} \leq n.
\end{equation}
\end{theorem}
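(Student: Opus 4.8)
The plan is to adapt the argument of \cite{Charina2014reproduction} to our diagonal anisotropic dilation $M$: reduce polynomial reproduction to one level-independent identity on the mask, rewrite this identity through the polyphase components of the symbol $p$, and finally reassemble the local information into derivative conditions on $p$ at the points of $E_M$. Convergence of $S_{\mathbf{p}}$ is needed only to tie the limit in Definition~\ref{d:poly_reproduction} to the algebraic statement, and Theorem~\ref{t:poly_generation} will reappear automatically through the conditions at $E_M\setminus\set{\mathbf{1}}$ (recall that reproduction forces generation).

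\emph{Step 1: reduction to a sampling identity.} I would first observe that $S_{\mathbf{p}}$ reproduces $\Pi_n$ with respect to the parameter values \eqref{eq:parameter_values} if and only if, for every $\pi\in\Pi_n$ and every $k\geq 0$,
\begin{equation*}
\mathcal{S}_{\mathbf{p}}\set{\pi(\mathbf{t}^{(k)}(\bm{\alpha})) \, : \, \bm{\alpha}\in\ZZ^d} = \set{\pi(\mathbf{t}^{(k+1)}(\bm{\alpha})) \, : \, \bm{\alpha}\in\ZZ^d}
\end{equation*}
(this is the content of reproduction with respect to parameter values; its equivalence with the limit formulation of Definition~\ref{d:poly_reproduction} uses convergence of $S_{\mathbf{p}}$, the refinement equation \eqref{eq:ref_eq}, and --- for one direction --- uniqueness of the polynomial coefficient sequence reproducing a given $\pi$, which is guaranteed once $S_{\mathbf{p}}$ generates $\Pi_n$, cf.\ \cite{Cava_Dam_Micc}). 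Inserting the definition of $\mathcal{S}_{\mathbf{p}}$, writing $\bm{\alpha}=\bm{\gamma}+M\bm{\nu}$ with $\bm{\gamma}\in\Gamma$, and absorbing the offset $\mathbf{t}^{(k)}(\bm{0})$ together with the factor $M^{-k}$ into a new polynomial of degree $\leq n$, the relation collapses --- for all $k$ at once --- to the identity
\begin{equation*}
\sum_{\bm{\rho}\in\ZZ^d}\mathrm{p}(\bm{\gamma}+M\bm{\rho}) \, q(\bm{\rho}) = q\bigl(M^{-1}(\bm{\tau}-\bm{\gamma})\bigr), \qquad \forall\,\bm{\gamma}\in\Gamma, \quad q\in\Pi_n.
\end{equation*}

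\emph{Steps 2 and 3: the symbol computation.} For $\bm{\gamma}\in\Gamma$ set $p_{\bm{\gamma}}(\mathbf{z})=\sum_{\bm{\rho}\in\ZZ^d}\mathrm{p}(\bm{\gamma}+M\bm{\rho}) \, \mathbf{z}^{\bm{\rho}}$, so that $p(\mathbf{z})=\sum_{\bm{\gamma}\in\Gamma}\mathbf{z}^{\bm{\gamma}} \, p_{\bm{\gamma}}(\mathbf{z}^M)$ with $\mathbf{z}^M=(z_1^{m_1},\dots,z_d^{m_d})$. Expanding $e^{\bm{\rho}\cdot\mathbf{u}}$ in powers of $\mathbf{u}$ shows that a relation ``$\sum_{\bm{\rho}} a(\bm{\rho}) \, q(\bm{\rho})=q(\mathbf{v})$ for all $q\in\Pi_n$'' is equivalent to ``$\sum_{\bm{\rho}} a(\bm{\rho}) \, e^{\bm{\rho}\cdot\mathbf{u}} = e^{\mathbf{v}\cdot\mathbf{u}} + O(\abs{\mathbf{u}}^{n+1})$ as $\mathbf{u}\to\bm{0}$'' (both say that the moments $\sum_{\bm{\rho}}a(\bm{\rho})\bm{\rho}^{\bm{k}}$ and $\mathbf{v}^{\bm{k}}$ agree for $\abs{\bm{k}}\leq n$). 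Hence the identity of Step~1 reads $p_{\bm{\gamma}}(e^{\mathbf{u}})=e^{M^{-1}(\bm{\tau}-\bm{\gamma})\cdot\mathbf{u}}+O(\abs{\mathbf{u}}^{n+1})$ for each $\bm{\gamma}\in\Gamma$. Since $\bm{\varepsilon}^M=\mathbf{1}$ for every $\bm{\varepsilon}\in E_M$ and $M^{-1}\mathbf{a}\cdot M\mathbf{u}=\mathbf{a}\cdot\mathbf{u}$ for diagonal $M$, substituting these expansions into $p(\bm{\varepsilon}\cdot e^{\mathbf{u}})=\sum_{\bm{\gamma}\in\Gamma}\bm{\varepsilon}^{\bm{\gamma}}e^{\bm{\gamma}\cdot\mathbf{u}} \, p_{\bm{\gamma}}(e^{M\mathbf{u}})$ yields
\begin{equation*}
p(\bm{\varepsilon}\cdot e^{\mathbf{u}}) = \Bigl(\sum_{\bm{\gamma}\in\Gamma}\bm{\varepsilon}^{\bm{\gamma}}\Bigr) \, e^{\bm{\tau}\cdot\mathbf{u}} + O(\abs{\mathbf{u}}^{n+1}), \qquad \mathbf{u}\to\bm{0}.
\end{equation*}
Now $\sum_{\bm{\gamma}\in\Gamma}\bm{\varepsilon}^{\bm{\gamma}}=\prod_{i=1}^d\sum_{\gamma_i=0}^{m_i-1}\varepsilon_i^{\gamma_i}$ equals $\abs{\text{det }M}$ when $\bm{\varepsilon}=\mathbf{1}$ and vanishes otherwise (geometric sums of $m_i$-th roots of unity), so we obtain $p(\bm{\varepsilon}\cdot e^{\mathbf{u}})=O(\abs{\mathbf{u}}^{n+1})$ for $\bm{\varepsilon}\in E_M\setminus\set{\mathbf{1}}$ --- equivalently $D^{\bm{\mu}}p(\bm{\varepsilon})=0$ for $\abs{\bm{\mu}}\leq n$, since $\mathbf{u}\mapsto\bm{\varepsilon}\cdot e^{\mathbf{u}}$ is a local biholomorphism near $\bm{\varepsilon}$ --- together with $p(e^{\mathbf{u}})=\abs{\text{det }M} \, e^{\bm{\tau}\cdot\mathbf{u}}+O(\abs{\mathbf{u}}^{n+1})$. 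Expanding this last relation in powers of $\mathbf{u}$ and converting the logarithmic derivatives $\theta_i=z_i\partial_{z_i}$ into ordinary ones through $z_i^{\mu_i}D_i^{\mu_i}=\theta_i(\theta_i-1)\cdots(\theta_i-\mu_i+1)$ produces exactly the falling-factorial products $\prod_{\ell_i=0}^{\mu_i-1}(\tau_i-\ell_i)$ and the first set of conditions in \eqref{conditions:poly_reproduction}. Since every implication above is reversible, the converse direction follows as well.

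\emph{Where the difficulty lies.} The technical heart is Step~1: Definition~\ref{d:poly_reproduction} only pins down the limit function generated by level-$0$ data, whereas the shift $\bm{\tau}$ enters only through the finer-level offsets $\mathbf{t}^{(k)}(\bm{0})$; carefully propagating reproduction to all levels and matching it against the parametrised data needs convergence of $S_{\mathbf{p}}$ and uniqueness of the reproducing polynomial coefficient sequence --- in effect one must route through the generation characterization of Theorem~\ref{t:poly_generation}. By contrast, once the sampling identity of Step~1 is available, Steps~2 and 3 are a routine, if somewhat lengthy, computation with polyphase symbols and Taylor expansions around the points of $E_M$.
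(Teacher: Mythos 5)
The paper does not prove this theorem at all: it is quoted as a known result and attributed to \cite{Charina2014reproduction}, so there is no in-paper proof to compare against. Your argument is a correct reconstruction of the proof from that reference --- the reduction to step-wise reproduction $\mathcal{S}_{\mathbf{p}}\pi(\mathbf{t}^{(k)}(\cdot))=\pi(\mathbf{t}^{(k+1)}(\cdot))$, the resulting sampling identity $\sum_{\bm{\rho}}\mathrm{p}(\bm{\gamma}+M\bm{\rho})q(\bm{\rho})=q(M^{-1}(\bm{\tau}-\bm{\gamma}))$, the polyphase/moment-matching computation, and the falling-factorial conversion via $z_i^{\mu_i}D_i^{\mu_i}=\theta_i(\theta_i-1)\cdots(\theta_i-\mu_i+1)$ all check out. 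You also correctly identify that the only genuinely delicate point is the equivalence of Definition~\ref{d:poly_reproduction} with step-wise reproduction, which requires convergence and the uniqueness of the reproducing coefficient sequence; that equivalence is a lemma in \cite{Charina2014reproduction} and your sketch of it is adequate.
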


\noindent Theorem \ref{t:poly_reproduction} implies that, in order to have the maximum degree of polynomial reproduction, it is necessary to choose the parameter $\bm{\tau} \in \RR^d$ in \eqref{eq:parameter_values} carefully. 

In the univariate case, if the subdivision mask $\mathbf{p}$ is symmetric, i.e. $\mathrm{p}(\alpha) = \mathrm{p}(-\alpha)$, or interpolatory, then $\tau = 0$ is the optimal choice (\cite{CH2011}). Thus, \eqref{conditions:poly_generation} becomes
\[
\begin{split}
&p(1) = m, \\
&D^{\mu} p (1) = 0, \quad \mu \in \NN_0, \quad 1 \leq \mu \leq n, \\
&D^{\mu} p (\varepsilon) = 0, \quad \forall \, \varepsilon \in E_m \setminus \set{1}, \quad \mu \in \NN_0, \quad 0 \leq \mu \leq n.
\end{split}
\]
Therefore, in the univariate symmetric or interpolatory setting, Theorem \ref{t:poly_reproduction} is equivalent to requiring that the symbol $p(z)$ of the subdivision scheme $S_{\mathbf{p}}$ of dilation $m \in \NN$, $m \geq 2$, has the following decomposition \cite{ CH2011, Dyn2008polynomial}
\begin{equation} \label{eq:symbol_factorization_reproduction}
p(z) = m +  (1-z)^{n + 1} \, c(z), \quad z \in \CC \setminus \set{0},
\end{equation}
for a suitable Laurent polynomial $c(z)$.

In the bivariate case, if the subdivision mask $\mathbf{p}$ is symmetric, i.e.
\[
\mathrm{p}( \alpha_1, \alpha_2) = \mathrm{p}( \alpha_1, - \alpha_2) = \mathrm{p}( - \alpha_1, \alpha_2) = \mathrm{p}( - \alpha_1, - \alpha_2),
\]
or interpolatory, then $\bm{\tau} = \bm{0}$ is the optimal choice (\cite{Charina2014reproduction}) and \eqref{conditions:poly_generation} becomes
\[
\begin{split}
&p(\mathbf{1}) = \abs{\text{det } M }, \\
&D^{\bm{\mu}} p (\mathbf{1}) = 0, \quad \bm{\mu} \in \NN_0^d,\quad 1 \leq \abs{\mu} \leq n, \\
&D^{\bm{\mu}} p (\bm{\varepsilon}) = 0, \quad \forall \, \bm{\varepsilon} \in E_M \setminus \set{\mathbf{1}}, \quad \bm{\mu} \in \NN_0^d, \quad 0 \leq \abs{\bm{\mu}} \leq n.
\end{split}
\]
Thus, in the bivariate symmetric or interpolatory setting, Theorem \ref{t:poly_reproduction} is equivalent to requiring that 
\[
p(\mathbf{z}) - \abs{\text{det } M } \quad \in \quad <(1-z_1)^{\mu_1} (1-z_2)^{\mu_2} \, : \, \bm{\mu} = (\mu_1,\mu_2) \in \NN_0^2, \, \abs{\bm{\mu}} \geq n+1 >,
\]
or, equivalently, that the symbol $p(\mathbf{z})$ of the subdivision scheme $S_{\mathbf{p}}$ with dilation $M$ has the following decomposition
\begin{equation} \label{eq:symbol_factorization_reproduction_2D}
\begin{split}
&p (z_1, z_2) = \abs{\text{det } M } + \sum_{h=0}^H (1-z_1)^{\alpha_h} \, (1-z_2)^{\beta_h} \, c_h (z_1,z_2), \qquad (z_1,z_2) \in ( \CC \setminus \set{0} )^2, \\
&\alpha_h, \, \beta_h \in \NN_0, \qquad \alpha_h + \beta_h \geq n+1, \qquad h = 0, \ldots, H,
\end{split}
\end{equation}
for suitable Laurent polynomials $c_h$, $h = 0, \ldots, H$ (we require in \eqref{eq:symbol_factorization_reproduction_2D} that at least one pair $\alpha_h, \, \beta_h \in \NN_0$ satisfies $ \alpha_h + \beta_h = n+1$).
Identity \eqref{eq:symbol_factorization_reproduction_2D} is a natural generalization of the univariate identity \eqref{eq:symbol_factorization_reproduction}.

\begin{remark}
We are interested in symmetric subdivision schemes due to the use of vertex centered discretization for our numerical examples in Section \ref{sec:examples}. 
\end{remark}

%%%%%%%%%%%%%%%%%%%%%%%%%%%%%%%%%%%%%%%%%%%%%%%%%%%%%%%%%%%%%%%%%%%%%%%%
\section{Anisotropic interpolatory subdivision}
\label{sec:anisotropic_interp}
%%%%%%%%%%%%%%%%%%%%%%%%%%%%%%%%%%%%%%%%%%%%%%%%%%%%%%%%%%%%%%%%%%%%%%%%

In subsection \ref{ssec:1dcase}, we start by introducing the family of univariate interpolatory Dubuc-Deslauriers subdivision schemes. These will be a basis for our bivariate construction in subsection \ref{ssec:2dcase}.

%%%%%%%%%%%%%%%%%%%%%%%%%%%
\subsection{Univariate case} \label{ssec:1dcase}
%%%%%%%%%%%%%%%%%%%%%%%%%%%

In \cite{Deslauriers_Dubuc1989symmetric}, Deslauriers and Dubuc proposed a general method for constructing symmetric interpolatory subdivision schemes of dilations $m \in \NN$, $m \geq 2$. The smoothness analysis of their schemes was conducted by Eirola et al. in \cite{Eirola1992sobolev}. Recently, Diaz Fuentes proposed in his master thesis \cite{Fuentes2015thesis} a closed formula for computing the mask of the  interpolatory Dubuc-Deslauriers subdivision schemes for any dilation $m \in \NN$, $m \geq 2$.

\begin{definition} \label{d:DD_interp_1D}
Let $m \in \NN$, $m \geq 2$, and $n \in \NN$.
The univariate $(2n)$-point Dubuc-Deslauriers interpolatory subdivision schemes of dilation $m$ is defined by its symbol
\begin{equation} \label{eq:DD_interp_1D}
a_{m,n} (z) = 1 + \sum_{\varepsilon=1}^{m-1} \sum_{\beta=-n+1}^n \, \frac{(-1)^{\beta+n}}{(2n-1)! ( \varepsilon/m - \beta )} \binom{2n-1}{n-\beta}  \Biggl (- n+1- \frac{\varepsilon}{m} \Biggr )_{2n} \, z^{- m\beta+\varepsilon}, \qquad z \in \CC \setminus \set{0},
\end{equation}
where for any $x \in \RR$, $(x)_{\ell}$ is the Pochhammer symbol defined by
\[
(x)_0 := 1, \qquad \text{and} \qquad (x)_{\ell} := x \, (x+1) \cdots (x + \ell -1), \qquad \ell \in \NN.
\]
\end{definition}

For reader's convenience, we recall the main ideas in \cite{Deslauriers_Dubuc1989symmetric} behind the construction of symmetric interpolatory subdivision schemes and repeat a few computations from \cite{Fuentes2015thesis} conducted in order to obtain the symbols $a_{m,n}$ in \eqref{eq:DD_interp_1D}. Without loss of generality, we focus on the case $m=2 \ell + 1$, $\ell \in \NN$.

Let $\mathbf{c} = \set{ \mathrm{c} (\gamma) \in \RR \, : \, \gamma \in \ZZ} \in \ell (\ZZ)$ and fix an integer $\alpha \in \ZZ$. Let $\mathbf{c}^{\alpha} = \set{ \mathrm{c} (\gamma) \, : \, \alpha - n + 1 \leq \gamma \leq \alpha + n} \in \ell_0(\ZZ)$ be $2n$ consecutive elements of $\mathbf{c}$ centered in $\mathrm{c}(\alpha)$. There exist a unique polynomial $\pi \in \Pi_{2n-1}$ of degree $2n-1$ which interpolates $\mathbf{c}^{\alpha}$ at the integers $\set{\alpha - n + 1, \ldots, \alpha + n}$, namely
\[
\begin{split}
&\pi (t) = \sum_{\beta = \alpha - n + 1}^{\alpha + n} \mathrm{c}^{\alpha} (\beta) \mathcal{L}_{\beta}^{(\alpha - n + 1,2n-1)} (t)  
= \sum_{\beta = - n + 1}^{n} \mathrm{c}^{\alpha} (\beta + \alpha) \mathcal{L}_{\beta + \alpha}^{(\alpha - n + 1,2n-1)} (t),  \\
&\mathcal{L}_{\beta}^{(\alpha - n + 1,2n-1)} (t) = \prod_{\substack{\gamma = \alpha - n + 1 \\ \gamma \neq \beta}}^{\alpha + n} \frac{t - \gamma}{\beta - \gamma}, \qquad t \in \RR.
\end{split}
\]
For $\beta \in \set{ \alpha - n + 1, \ldots,  \alpha + n}$, $\mathcal{L}_{\beta}^{(\alpha - n + 1,2n-1)}$ is the Lagrange polynomial of degree $2n-1$, centered in $\beta$, defined on the $2n$ nodes $\set{\alpha - n + 1, \ldots, \alpha + n}$, and satisfies $\mathcal{L}_{\beta}^{(\alpha - n + 1,2n-1)} (\varepsilon) = \delta_{\beta , \varepsilon}$, $\varepsilon \in \set{\alpha - n + 1, \ldots, \alpha + n}$.
In order to define the subdivision operator $\mathcal{S}_{\mathbf{a}_{m,n}}$, we define its action on the finite sequence $\mathbf{c}^{\alpha}$ by
\[
\begin{array}{rll}
(\mathcal{S}_{\mathbf{a}_{m,n}} \mathbf{c}^{\alpha} ) (m \alpha + \varepsilon) := \pi \Biggl ( \alpha + {\displaystyle \frac{\varepsilon}{m}} \Biggr ) 
& = {\displaystyle \sum_{\beta = - n + 1}^{ n}} \mathcal{L}_{\beta + \alpha }^{(\alpha - n + 1,2n-1)} \Biggl ( \alpha + {\displaystyle \frac{\varepsilon}{m}} \Biggr ) \mathrm{c}^{\alpha} (\beta + \alpha) & \quad \text{by definition of } \pi \\
& = {\displaystyle  \sum_{\beta = \alpha - n + 1}^{\alpha + n}} \mathrm{a}_{m,n} \bigl (m (\alpha - \beta) + \varepsilon \bigr ) \mathrm{c}^{\alpha}(\beta) & \quad \text{by definition of subdivision operator} \\
& = {\displaystyle \sum_{\beta = - n + 1}^{n}} \mathrm{a}_{m,n}\bigl ( - m \beta + \varepsilon \bigr ) \mathrm{c}^{\alpha}(\beta + \alpha), & \quad \varepsilon = - {\displaystyle \frac{m-1}{2}, \ldots, \frac{m-1}{2}}.
\end{array}
\]
For $\varepsilon = 0$, $(\mathcal{S}_{\mathbf{a}_{m,n}} \mathbf{c}^{\alpha}) (m \alpha) = \pi ( \alpha ) = \mathrm{c}^{\alpha} (\alpha)$, thus $\mathrm{a}_{m,n} (m \beta) = \delta_{\beta, 0}$.

\noindent For $\varepsilon \neq 0$, using simple properties of $\mathcal{L}_{\beta}^{(\alpha - n + 1,2n-1)}$ we get
\[
\begin{split}
\mathrm{a}_{m,n}\bigl ( - m \beta + \varepsilon \bigr ) &= \mathcal{L}_{\beta + \alpha}^{(\alpha - n + 1,2n-1)} \Biggl ( \alpha + \frac{\varepsilon}{m} \Biggr ), \\
&= \mathcal{L}_{\beta}^{(- n + 1,2n-1)} \Biggl ( \frac{\varepsilon}{m} \Biggr ), \\
&= \frac{(-1)^{\beta+n}}{(2n-1)! ( \varepsilon/m - \beta)} \binom{2n-1}{n-\beta} \Biggl (-n+1- \frac{\varepsilon}{m} \Biggr )_{2n}.
\end{split}
\]
Formula \eqref{eq:DD_interp_1D} follows from property
\[
a_{m,n} (z) = \sum_{\alpha \in \ZZ} \mathrm{a}_{m,n} (\alpha) z^{\alpha} = \sum_{\varepsilon = 0}^{m-1} \sum_{\beta = - n+1}^n \mathrm{a}_{m,n} ( - m \beta + \varepsilon ) z^{- m \beta + \varepsilon }, \qquad z \in \CC \setminus \set{0}.
\] 
By construction, for any $m \in \NN$, $m \geq 2$, and $n \in \NN$, the  univariate $(2n)$-point Dubuc-Deslauriers interpolatory subdivision schemes of dilation $m$ generate and reproduce polynomials up to degree $2n-1$.
We recall that $\mathcal{S}_{\mathbf{a}_{m,n}}$ is the unique univariate subdivision scheme of dilation $m$ such that 
\begin{enumerate}
\item it is interpolatory, 
\item it generates polynomials up to degree $2n-1$,
\item its mask $\mathbf{a}_{m,n}$ is symmetric and has support $\set{1-mn, \ldots, mn-1}$.
\end{enumerate}

%%%%%%%%%%%%%%%%%%%%%%%%%%%
\subsection{Bivariate case} \label{ssec:2dcase}
%%%%%%%%%%%%%%%%%%%%%%%%%%%

From the family of univariate interpolatory Dubuc-Deslauriers subdivision schemes we build a family of bivariate interpolatory subdivision schemes with dilation matrix $M$ in \eqref{eq:dilation} using the approach from \cite{Conti2017pseudo}.

\begin{definition}[Anisotropic interpolatory subdvision schemes] \label{d:interp_anisotropic}
Let $n \in \NN$. The \emph{anisotropic interpolatory} subdvision scheme  $\mathcal{S}_{\mathbf{a}_{M,n}}$ of order $n$ and dilation matrix $M$ in \eqref{eq:dilation} is defined by its symbol
\begin{equation} \label{eq:interp_anisotropic}
a_{M,n} (z_1,z_2) := \sum_{k=0}^{n-1} a_{2,n-k} (z_1) \, a_{m,k+1} (z_2) - \sum_{k=0}^{n-2} a_{2, n-k-1} (z_1) \, a_{m,k+1} (z_2), \qquad (z_1,z_2) \in ( \CC \setminus \set{0})^2,
\end{equation}
where $a_{2,k}, \, a_{m,k}$ are the symbols of the univariate $(2n)$-point Dubuc-Deslauriers interpolatory subdivision schemes defined in \eqref{eq:DD_interp_1D} of dilation 2 and $m$, respectively.
\end{definition}

Definition \ref{d:interp_anisotropic} is justified by the following result.

\begin{proposition} \label{p:anisotropic_interp}
Let $n \in \NN$ and $M$ in \eqref{eq:dilation}. The anisotropic subdvision scheme $\mathcal{S}_{\mathbf{a}_{M,n}}$ in Definition \ref{d:interp_anisotropic} is interpolatory.
\end{proposition}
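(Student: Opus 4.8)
The plan is to verify the symbol characterization of Theorem~\ref{t:interp_symbol}: a convergent subdivision scheme $\mathcal{S}_{\mathbf{a}_{M,n}}$ is interpolatory if and only if $\sum_{\bm{\xi} \in E_M} a_{M,n}(\bm{\xi} \cdot \bm{z}) = \abs{\det M} = 2m$ for all $\bm{z} \in (\CC \setminus \set{0})^2$. Since $M$ is diagonal, $E_M$ factors as a product: writing $E_2 = \set{1,-1}$ and $E_m = \set{e^{-\mathrm{i} 2\pi r/m} \, : \, r = 0, \ldots, m-1}$, we have $E_M = \set{(\xi_1,\xi_2) \, : \, \xi_1 \in E_2, \, \xi_2 \in E_m}$. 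So the sum over $E_M$ of a tensor-product term $a_{2,j}(z_1)\,a_{m,k}(z_2)$ splits as $\bigl(\sum_{\xi_1 \in E_2} a_{2,j}(\xi_1 z_1)\bigr)\bigl(\sum_{\xi_2 \in E_m} a_{m,k}(\xi_2 z_2)\bigr)$. By Theorem~\ref{t:interp_symbol} applied in the univariate setting, since each $a_{2,j}$ and $a_{m,k}$ is the symbol of a (convergent) univariate interpolatory Dubuc--Deslauriers scheme, we have $\sum_{\xi_1 \in E_2} a_{2,j}(\xi_1 z_1) = 2$ and $\sum_{\xi_2 \in E_m} a_{m,k}(\xi_2 z_2) = m$ identically, for every admissible order $j, k$.

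First I would record this factorization and apply it term by term to the defining formula \eqref{eq:interp_anisotropic}. Each summand $a_{2,n-k}(z_1)\,a_{m,k+1}(z_2)$ in the first sum contributes $2m$ after summation over $E_M$, and there are $n$ such terms ($k = 0, \ldots, n-1$), giving $2mn$. Each summand $a_{2,n-k-1}(z_1)\,a_{m,k+1}(z_2)$ in the second sum likewise contributes $2m$, and there are $n-1$ such terms ($k = 0, \ldots, n-2$), giving $2m(n-1)$. Hence
\[
\sum_{\bm{\xi} \in E_M} a_{M,n}(\bm{\xi} \cdot \bm{z}) = 2mn - 2m(n-1) = 2m = \abs{\det M},
\]
which is exactly the condition of Theorem~\ref{t:interp_symbol}. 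Combined with convergence of $\mathcal{S}_{\mathbf{a}_{M,n}}$, this yields that the scheme is interpolatory.

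The only genuine gap is that Theorem~\ref{t:interp_symbol} presupposes convergence, so strictly one should note that the interpolation property \eqref{eq:mask_interp} — namely $\mathrm{a}_{M,n}(M\bm{\alpha}) = \delta_{\bm{\alpha},\bm{0}}$ — can in fact be read off the symbol identity directly without invoking convergence, since the $E_M$-sum identity is equivalent to the statement that the only nonzero coefficient of $a_{M,n}$ on the sublattice $M\ZZ^2$ is the constant term, which equals $1$; this follows because the univariate masks $\mathbf{a}_{2,j}$ and $\mathbf{a}_{m,k}$ are interpolatory, so $\mathrm{a}_{2,j}(2\alpha_1) = \delta_{\alpha_1,0}$ and $\mathrm{a}_{m,k}(m\alpha_2) = \delta_{\alpha_2,0}$, whence each tensor term restricted to $M\ZZ^2 = 2\ZZ \times m\ZZ$ equals $\delta_{\bm{\alpha},\bm{0}}$, and the telescoping $n - (n-1) = 1$ gives $\mathrm{a}_{M,n}(\bm{0}) = 1$ and $\mathrm{a}_{M,n}(M\bm{\alpha}) = 0$ for $\bm{\alpha} \neq \bm{0}$. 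I expect the main (minor) obstacle to be purely bookkeeping: making sure the two index ranges in \eqref{eq:interp_anisotropic} are handled correctly so the cancellation count is exactly $n - (n-1)$, and confirming that all the univariate orders appearing ($n-k$, $k+1$, $n-k-1$) are $\geq 1$ so that Definition~\ref{d:DD_interp_1D} and the univariate interpolation property apply to each factor.
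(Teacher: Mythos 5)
Your proposal is correct and follows essentially the same route as the paper: both verify the symbol condition of Theorem~\ref{t:interp_symbol} by splitting the sum over $E_M$ into the $E_2$- and $E_m$-parts, invoking the univariate identities $a_{2,j}(z_1)+a_{2,j}(-z_1)=2$ and $\sum_{\xi\in E_m}a_{m,k}(\xi z_2)=m$, and telescoping $2mn-2m(n-1)=2m$. Your closing remark about reading the interpolation property directly off the mask coefficients (thereby sidestepping the convergence hypothesis of Theorem~\ref{t:interp_symbol}) is a refinement the paper does not make, and it is a legitimate observation since convergence is only established later in subsection~\ref{ssec:convergence}.
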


\begin{proof}
Let $n \in \NN$. By Theorem \ref{t:interp_symbol}, in order to prove Proposition \ref{p:anisotropic_interp}, we need to show that
\[
s_n(z_1,z_2) := \sum_{j=0}^{m-1} a_{M, n} ( z_1, \, \xi_j z_2 ) + \sum_{j=0}^{m-1} a_{M, n} (- z_1, \, \xi_j z_2 ) = \abs{ \det M} = 2m,
\]
for $ \xi_j = {\displaystyle e^{- \frac{2 \pi \mathrm{i}}{m} j }}$, $ j=0, \ldots, m-1$.
Since $\mathcal{S}_{\mathbf{a}_{2,n}}$ and $\mathcal{S}_{\mathbf{a}_{m,n}}$ are univariate interpolatory subdivision schemes of dilation 2 and $m$ respectively, Theorem \ref{t:interp_symbol} guarantees that their symbols satisfy
\[
a_{2,n}( z_1 ) + a_{2,n}(- z_1 ) = 2 \qquad \text{and} \qquad  \sum_{j=0}^{m-1} a_{m,n}  ( \xi_j z_2  ) = m.
\]
By \eqref{eq:interp_anisotropic}, we have
\[
\begin{split}
s_n(z_1,z_2) = &\sum_{k=0}^{n-1} a_{2,n-k} (z_1) \, \sum_{j=0}^{m-1} a_{m, k+1} ( \xi_j z_2 ) - \sum_{k=0}^{n-2} a_{2,n-k-1} (z_1) \,  \sum_{j=0}^{m-1} a_{m, k+1} ( \xi_j z_2 ) \\
+ &\sum_{k=0}^{n-1} a_{2,n-k} (-z_1) \, \sum_{j=0}^{m-1} a_{m, k+1} ( \xi_j z_2 ) - \sum_{k=0}^{n-2} a_{2,n-k-1} (-z_1) \,  \sum_{j=0}^{m-1} a_{m, k+1} ( \xi_j z_2 ) \\
= & \, m \, \sum_{k=0}^{n-1} ( a_{2,n-k} (z_1) + a_{2,n-k} (-z_1) ) - m \sum_{k=0}^{n-2}  ( a_{2,n-k-1} (z_1) + a_{2,n-k-1} (-z_1) ) \\
= & \, 2mn - 2m (n-1) = 2m.
\end{split}
\]

\end{proof}

Further properties of the interpolatory subdivision schemes $\mathcal{S}_{\mathbf{a}_{M,n}}$ in Definition \ref{d:interp_anisotropic} are analyzed in subsections \ref{ssec:reproduction} (reproduction), \ref{ssec:minimality} (minimality of the support) and \ref{ssec:convergence} (convergence).

%%%%%%%%%%%%%%%%%%%%%%%%%%%
\subsection{Reproduction property of $\mathcal{S}_{\mathbf{a}_{M,n}}$}
\label{ssec:reproduction}
%%%%%%%%%%%%%%%%%%%%%%%%%%%

In this section, we show that the anisotropic interpolatory subdivision schemes $\mathcal{S}_{\mathbf{a}_{M,n}}$ in Definition \ref{d:interp_anisotropic} reproduce polynomials up to degree $2n-1$.

\begin{proposition} \label{p:anisotropic_repr}
Let $n \in \NN$ and $M$ in \eqref{eq:dilation}. The anisotropic interpolatory subdvision scheme $\mathcal{S}_{\mathbf{a}_{M,n}}$ in Definition \ref{d:interp_anisotropic} reproduces polynomials up to degree $2n-1$.
\end{proposition}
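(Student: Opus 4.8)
The plan is to verify the characterization of polynomial reproduction given in Theorem~\ref{t:poly_reproduction}, specialized to the symmetric/interpolatory situation. Since $\mathcal{S}_{\mathbf{a}_{M,n}}$ is interpolatory by Proposition~\ref{p:anisotropic_interp}, the optimal parameter is $\bm{\tau} = \bm{0}$, so it suffices to show that the symbol $a_{M,n}(z_1,z_2)$ satisfies
\[
D^{\bm{\mu}} a_{M,n}(\mathbf{1}) = \abs{\det M}\,\delta_{\bm{\mu},\bm{0}}, \qquad D^{\bm{\mu}} a_{M,n}(\bm{\varepsilon}) = 0 \quad \forall\,\bm{\varepsilon}\in E_M\setminus\set{\mathbf{1}},
\]
for all $\bm{\mu}\in\NN_0^2$ with $\abs{\bm{\mu}}\le 2n-1$. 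Equivalently, using the ideal formulation developed after Theorem~\ref{t:poly_generation}, I would show $a_{M,n}(z_1,z_2) - 2m \in \mathcal{I}_{2n-1}$. The key input is that each univariate factor $a_{2,k}$ reproduces (hence generates) polynomials up to degree $2k-1$, and $a_{m,k}$ up to degree $2k-1$; by the univariate decomposition \eqref{eq:symbol_factorization_reproduction} (valid since these schemes are interpolatory, $\tau=0$), we have $a_{2,k}(z_1) = 2 + (1-z_1)^{2k}\,c_k(z_1)$ and $a_{m,k}(z_2) = m + (1-z_2)^{2k}\,d_k(z_2)$ for suitable Laurent polynomials $c_k, d_k$. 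In ideal language, $a_{2,k}(z_1) \in \langle 2, (1-z_1)^{2k}\rangle$ in the appropriate sense, and the generation conditions at $E_M\setminus\set{\mathbf{1}}$ hold with matching orders.

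Next I would substitute these decompositions into \eqref{eq:interp_anisotropic} and track orders of vanishing. The telescoping structure of \eqref{eq:interp_anisotropic} — which is exactly the device already used in Proposition~\ref{p:anisotropic_interp} — is what makes the constant come out right: summing $a_{2,n-k}(z_1)a_{m,k+1}(z_2)$ over $k=0,\dots,n-1$ and subtracting the shifted sum over $k=0,\dots,n-2$ collapses, modulo the ideal, to $2m$. More precisely, I would argue that modulo $\mathcal{J}_{2n-1}$ (the ideal generated by $(1-z_1^2)^{\mu_1}(1-z_2^m)^{\mu_2}$ with $\mu_1+\mu_2=2n$, which is where the "special points" vanishing lives) each product $a_{2,n-k}(z_1)a_{m,k+1}(z_2)$ reduces to $2\cdot a_{m,k+1}(z_2)$ or similar, and the telescoping leaves only the lowest-order term. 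The cleanest route is probably to write, for the term indexed by $k$, that $a_{2,n-k}(z_1) = 2 + (1-z_1)^{2(n-k)}c_{n-k}(z_1)$ and $a_{m,k+1}(z_2) = m + (1-z_2)^{2(k+1)}d_{k+1}(z_2)$, so the product equals $2m$ plus terms of the form $(1-z_1)^{2(n-k)}(\cdots) + (1-z_2)^{2(k+1)}(\cdots) + (1-z_1)^{2(n-k)}(1-z_2)^{2(k+1)}(\cdots)$; the cross term has bidegree vanishing $2(n-k) + 2(k+1) = 2n+2 \ge 2n$, hence lies in the right ideal, and the single-variable terms must be handled by the telescoping cancellation against the second sum.

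Carrying out that cancellation carefully is the main obstacle. After expansion, $a_{M,n}(z_1,z_2) = 2m + \sum (\text{pure }z_1\text{ terms}) + \sum (\text{pure }z_2\text{ terms}) + (\text{high-order cross terms})$, and I must verify that the pure-$z_1$ and pure-$z_2$ contributions, once the telescoping subtraction is accounted for, each vanish to order $\ge 2n$ at the relevant points. For the pure-$z_2$ terms: the second sum in \eqref{eq:interp_anisotropic} reindexes so that $a_{2,n-k-1}$ pairs with $a_{m,k+1}$, and matching the $k$-th term of the first sum (which contributes $(1-z_1)^{2(n-k)}c_{n-k}(z_1)\cdot m$ as its pure-$z_1$-free-but-$z_2$-free piece — wait, that is the $z_1$-vanishing piece) against the shifted term requires bookkeeping of which index shift aligns the $(1-z_1)$ powers. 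I expect the identity $\sum_{k=0}^{n-1}\big(a_{2,n-k}(z_1)-[k\le n-2]\,a_{2,n-k-1}(z_1)\big) = $ something telescoping to a single $a_{2,1}$-type term plus high-order $(1-z_1)$ remainders to do the work, mirroring the numerical telescoping $2mn - 2m(n-1) = 2m$ in the proof of Proposition~\ref{p:anisotropic_interp}. One must also double-check the vanishing at points $\bm{\varepsilon}\in E_M\setminus\set{\mathbf{1}}$, i.e. where $z_1 = -1$ or $z_2$ is a nontrivial $m$-th root of unity: at such points each univariate factor $a_{2,k}(-1)=0$ or $a_{m,k}(\xi_j)=0$ together with enough derivatives, and the product structure propagates this — this part is routine given Theorem~\ref{t:poly_generation} applied to the univariate factors, but the orders must be added correctly so that the total reaches $2n-1$.
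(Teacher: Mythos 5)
Your proposal is correct and follows essentially the same route as the paper: substitute the univariate factorizations $a_{2,k}(z)=2+(1-z)^{2k}b_{2,k}(z)$ and $a_{m,k}(z)=m+(1-z)^{2k}b_{m,k}(z)$ into \eqref{eq:interp_anisotropic}, expand, and check that every remainder term carries a factor $(1-z_1)^{\alpha}(1-z_2)^{\beta}$ with $\alpha+\beta\geq 2n$. The telescoping you flagged as the main obstacle works exactly as you anticipated: after reindexing, the pure-$z_1$ contributions of the two sums cancel except for the single term $m(1-z_1)^{2n}b_{2,n}(z_1)$, the pure-$z_2$ contributions cancel except for $2(1-z_2)^{2n}b_{m,n}(z_2)$, the constants give $2mn-2m(n-1)=2m$, and the cross terms have total orders $2n+2$ and $2n$ respectively, which is precisely the computation in the paper's proof.
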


\begin{proof}
By \eqref{eq:symbol_factorization_reproduction_2D}, in order to prove Proposition \ref{p:anisotropic_repr}, we need to show that the symbol $a_{M,n}$ can be decomposed as
\[
\begin{split}
&a_{M,n} (z_1, z_2) = 2m + \sum_{h=0}^H (1-z_1)^{\alpha_h} \, (1-z_2)^{\beta_h} \, c_{M,n,h} (z_1,z_2), \qquad (z_1,z_2) \in ( \CC \setminus \set{0} )^2, \\
&\alpha_h, \, \beta_h \in \NN_0, \qquad \alpha_h + \beta_h \geq 2n, \qquad h = 0, \ldots, H,
\end{split}
\]
for some $H \in \NN$ and some suitable Laurent polynomials $c_{M,n,h}$, $h = 0, \ldots, H$. We recall that for any $k \in \NN$, the univariate $(2k)$-point Dubuc-Deslauriers interpolatory subdivision schemes $\mathcal{S}_{\mathbf{a}_{2,k}}$, $\mathcal{S}_{\mathbf{a}_{m,k}}$ in Definition \ref{d:DD_interp_1D} of dilation 2 and $m$, respectively, reproduce polynomials up to degree $2k-1$. Thus, from \eqref{eq:symbol_factorization_reproduction}, their symbols $a_{2,k}, \, a_{m,k}$ in \eqref{eq:DD_interp_1D} can be written as
\begin{equation} \label{eq:DD_factor}
\begin{split}
a_{2,k} (z) &= 2 +  (1-z)^{2k} \, b_{2,k} (z), \\
a_{m,k} (z) &= m +  (1-z)^{2k} \, b_{m,k} (z),\qquad z \in \CC \setminus \set{0},
\end{split}
\end{equation}
for suitable Laurent polynomials $b_{2,k}, \, b_{m,k}$.
By \eqref{eq:interp_anisotropic}, using factorization \eqref{eq:DD_factor}, there exist Laurent polynomials $ b_{2,n-k}, \, b_{m,k+1}$, $k = 0, \ldots, n-1$, such that
\[
\begin{split}
a_{M,n} (z_1,z_2) = &\sum_{k=0}^{n-1} a_{2,n-k} (z_1) \, a_{m,k+1} (z_2) - \sum_{k=0}^{n-2} a_{2, n-k-1} (z_1) \, a_{m,k+1} (z_2) \\
= &\sum_{k=0}^{n-1} \Bigl ( 2 + (1-z_1)^{2(n-k)} \, b_{2,n-k} (z_1) \Bigr ) \, \Bigl ( m +  (1-z_2)^{2(k+1)} \, b_{m,k+1} (z_2) \Bigr ) \\
- &\sum_{k=0}^{n-2}  \Bigl ( 2 + (1-z_1)^{2(n-k-1)} \, b_{2,n-k-1} (z_1) \Bigr ) \, \Bigl ( m +  (1-z_2)^{2(k+1)} \, b_{m,k+1} (z_2) \Bigr ) \\
= \, &2mn + m \sum_{k=0}^{n-1} (1-z_1)^{2(n-k)} \, b_{2,n-k} (z_1) + 2 \sum_{k=0}^{n-1} (1-z_2)^{2(k+1)} \, b_{m,k+1} (z_2) \\
+ &\sum_{k=0}^{n-1} (1-z_1)^{2(n-k)} (1-z_2)^{2(k+1)} \,  b_{2,n-k} (z_1) \, b_{m,k+1} (z_2) \\
- \, &2m(n-1)  -  m \sum_{k=0}^{n-2} (1-z_1)^{2(n-k-1)} \, b_{2,n-k-1} (z_1)  - 2 \sum_{k=0}^{n-2} (1-z_2)^{2(k+1)} \, b_{m,k+1} (z_2) \\
- &\sum_{k=0}^{n-2} (1-z_1)^{2(n-k-1)} (1-z_2)^{2(k+1)} \,  b_{2,n-k-1} (z_1) \, b_{m,k+1} (z_2) \\
= \, & 2m + m (1-z_1)^{2n} \, b_{2,n} (z_1) + 2 (1-z_2)^{2n} \, b_{m,n} (z_2) + \sum_{k=0}^{n-1} (1-z_1)^{2(n-k)} (1-z_2)^{2(k+1)} \,  b_{2,n-k} (z_1) \, b_{m,k+1} (z_2) \\
- & \sum_{k=0}^{n-2} (1-z_1)^{2(n-k-1)} (1-z_2)^{2(k+1)} \,  b_{2,n-k-1} (z_1) \, b_{m,k+1} (z_2) \\
= \, &2m + \sum_{h=0}^{2n} (1-z_1)^{\alpha_h} \, (1-z_2)^{\beta_h} \, c_{M,n,h} (z_1,z_2)
\end{split}
\]
where
\[
\alpha_h = 
\begin{cases}
2n & h = 0, \\
0 & h = 1, \\
2(n-h+2) & h \in \set{2, \ldots, n+1}, \\
2(2n-h+1) &  h \in \set{n+2, \ldots, 2n},
\end{cases} \qquad 
\beta_h = 
\begin{cases}
0 & h = 0, \\
2n & h = 1, \\
2(h-1) & h \in \set{2, \ldots, n+1}, \\
2(h-n-1) &  h \in \set{n+2, \ldots, 2n},
\end{cases}
\]
and
\[
 c_{M,n,h} (z_1,z_2) = 
\begin{cases}
m \, b_{2,n}(z_1) & h = 0, \\
2 \, b_{m,n}(z_2) & h = 1, \\
b_{2,n-h+2}(z_1) \, b_{m,h-1}(z_2) & h \in \set{2, \ldots, n+1}, \\
b_{2,2n-h+1}(z_1) \, b_{m,h-n-1}(z_2) & h \in \set{n+2, \ldots, 2n}.
\end{cases}
\]
The claim follows from
\[
\alpha_h  + \beta_h = 
\begin{cases}
2n & h \in \set{0,1,n+2, \ldots, 2n}, \\
2n+2 & h \in \set{2, \ldots, n+1}.
\end{cases}
\]
\end{proof}

%%%%%%%%%%%%%%%%%%%%%%%%%%%
\subsection{Minimality property of $\mathcal{S}_{\mathbf{a}_{M,n}}$}
\label{ssec:minimality}
%%%%%%%%%%%%%%%%%%%%%%%%%%%

In \cite{Han_Jia_1998optimal}, Ron and Jia constructed a family of interpolatory subdivision schemes with dilation matrix $2 I_2$ and minimal support.
The first aim of this section (see Proposition \ref{p:jia_interp_M}) is to generalize the result of Ron and Jia to our setting with dilation matrix $M$ in \eqref{eq:dilation}. Then, in Theorem \ref{p:Jia_our}, using Proposition \ref{p:jia_interp_M}, we show that Definition \ref{d:interp_anisotropic} provides a closed formula for the symbols of the minimally supported interpolatory subdivision schemes.

\begin{proposition} \label{p:jia_interp_M}
Let $n \in \NN$. There exists a unique interpolatory subdivision scheme with dilation matrix $M$ whose mask $\mathbf{c}_{M,n}$ satisfies
\begin{enumerate}
\item[(i)] $\mathbf{c}_{M,n}$ has support 
\[
\set{ (\alpha_1,\alpha_2) \in \ZZ^2 \, : \, m \abs{\alpha_1} + 2 \abs{\alpha_2} \leq 2mn-2+m} \subset \set{1-2n, \ldots, 2n-1} \times \set{1-mn, \ldots, mn-1},
\]
\item[(ii)] $\mathbf{c}_{M,n}$ is symmetric,
\item[(iii)] $\mathcal{S}_{\mathbf{c}_{M,n}}$ reproduces polynomials up to degree $2n-1$.
\end{enumerate}
\end{proposition}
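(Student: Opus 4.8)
The plan is to follow the classical Ron--Jia argument, adapting it to the anisotropic dilation $M$ in \eqref{eq:dilation}. A mask supported on the octagonal set $\Omega_n := \set{(\alpha_1,\alpha_2)\in\ZZ^2 : m\abs{\alpha_1}+2\abs{\alpha_2}\le 2mn-2+m}$ that is symmetric under all four sign changes is determined by its values on a fundamental domain for the symmetry group; the number of such free parameters equals the number of symmetry orbits inside $\Omega_n$. The interpolation conditions \eqref{eq:mask_interp} (which fix $\mathrm{c}_{M,n}(\bm 0)=1$ and $\mathrm{c}_{M,n}(M\bm\alpha)=0$ for $\bm\alpha\ne\bm 0$) and the reproduction conditions of degree $2n-1$ impose a linear system on these parameters. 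First I would set up this linear system carefully, then show it is square and invertible, so that existence and uniqueness follow simultaneously.

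For the counting step I would argue as follows. Symmetry plus interpolation already pins down the values of the mask on the sublattice $M\ZZ^2\cap\Omega_n$. The remaining free parameters are indexed by symmetry orbits of lattice points in $\Omega_n\setminus M\ZZ^2$. On the other side, by Theorem \ref{t:poly_reproduction} with $\bm\tau=\bm 0$ (legitimate here since $\mathbf{c}_{M,n}$ is symmetric), reproduction up to degree $2n-1$ is equivalent to the conditions $p(\bm 1)=\abs{\det M}=2m$, $D^{\bm\mu}p(\bm 1)=0$ for $1\le\abs{\bm\mu}\le 2n-1$, and $D^{\bm\mu}p(\bm\varepsilon)=0$ for all $\bm\varepsilon\in E_M\setminus\set{\mathbf 1}$ and $\abs{\bm\mu}\le 2n-1$. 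Exploiting symmetry, the conditions at a point $\bm\varepsilon$ and at its mirror images coincide or are automatically satisfied, so many of these equations are redundant; the effective number of independent linear constraints must be shown to match the parameter count. This is the combinatorial heart of the proof and I expect it to be the main obstacle: one has to count both the lattice points in the anisotropic octagon modulo the symmetry group and the independent reproduction equations modulo symmetry, and show the two numbers agree. In the isotropic case $M=2I_2$ this is the content of \cite{Han_Jia_1998optimal}; here the asymmetric scaling $m\abs{\alpha_1}+2\abs{\alpha_2}$ in the support and the larger coset set $E_M$ (of size $2m$ rather than $4$) make the bookkeeping heavier, though structurally parallel.

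Granting the dimension count, invertibility of the square system would follow from a separate injectivity argument: if a symmetric mask supported in $\Omega_n$ satisfies the homogeneous interpolation and reproduction conditions (i.e. $p(\bm 1)=0$ together with all the vanishing derivative conditions), then the associated symbol lies in a sufficiently deep ideal of the form described after \eqref{eq:symbol_factorization_reproduction_2D}, forcing its Newton polytope to be too large to fit inside $\Omega_n$ unless $p\equiv 0$. Concretely, the zero conditions at $E_M$ of order $2n$ mean the symbol is divisible (in the ideal-theoretic sense) by products of $(1-z_1^{\,2})$ and $(1-z_2^{\,m})$ totalling degree $2n$ in the appropriate multi-degree, while the homogeneous reproduction conditions at $\bm 1$ add a further order-$2n$ vanishing; combined, a nonzero such polynomial would need support strictly larger than the prescribed octagon. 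I would make this precise by a degree/support estimate analogous to the univariate factorization argument, extended bivariately via the ideal decomposition recalled in the excerpt. Finally, I would note that uniqueness of the scheme (not just the mask on $\Omega_n$) is immediate once the mask is uniquely determined, and that convergence of $\mathcal{S}_{\mathbf{c}_{M,n}}$ — needed for clauses involving "reproduces" in the sense of Definition \ref{d:poly_reproduction} — is deferred to subsection \ref{ssec:convergence}, so in this proposition "reproduces" is to be read as the algebraic symbol condition \eqref{conditions:poly_reproduction}.
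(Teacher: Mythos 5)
Your outline has the right global shape (set up a square linear system, count dimensions, prove injectivity), but two essential steps are missing, and the second is replaced by a sketch that would not survive scrutiny as written. First, you never perform the dimension count that you yourself identify as ``the combinatorial heart''. The paper makes this count tractable by a structural observation you do not use: the sum rules \eqref{eq:sum_rules} decouple into $\#\Gamma - 1 = 2m-1$ \emph{independent} linear systems, one per coset $(k,j)\in\Gamma\setminus\set{(0,0)}$, each involving only the submask $\bigl(\mathrm{c}_{M,n}(k+2\alpha_1,\, j+m\alpha_2)\bigr)$. Within each coset the support inequalities reduce to the triangular index set $\mathcal{A}=\set{(\alpha_1,\alpha_2)\,:\,0\le\alpha_1\le n-1,\ \alpha_1-n\le\alpha_2\le n-\alpha_1-1}$ of cardinality $n(n+1)$, which (after discarding the equations with odd $\mu_1$, automatically satisfied by symmetry) exactly matches $\#\set{(\mu_1,\mu_2)\,:\,2\mu_1+\mu_2\le 2n-1}=n(n+1)$; the coset $(1,0)$ carries an extra symmetry and yields an $\tfrac{n(n+1)}{2}\times\tfrac{n(n+1)}{2}$ system. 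Without this coset-by-coset decoupling, your global comparison of ``symmetry orbits in the octagon'' against ``independent reproduction equations'' is far harder, and you give no indication of how to close it.

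Second, your injectivity argument --- that a nonzero symbol in the homogeneous kernel lies in too deep an ideal to have Newton polytope inside the octagon --- is not established and is delicate: the centered generator $z_1^{-2n}(1-z_1^{2})^{2n}$ lies in $\mathcal{J}_{2n-1}$ (so it satisfies every vanishing-derivative condition at all of $E_M$, including order-$2n$ vanishing at $\mathbf{1}$) and its support \emph{does} fit inside $\set{m\abs{\alpha_1}+2\abs{\alpha_2}\le 2mn-2+m}$; it is only excluded by the homogeneous interpolatory conditions, since its coefficients sit on $M\ZZ^2$. So the support obstruction from ideal membership alone cannot work, and you do not show how the interpolatory conditions interact with it to force $p\equiv 0$. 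The paper avoids this issue entirely: once the systems are decoupled per coset, each coefficient matrix is a multivariate point-evaluation matrix $\bigl((k+2\alpha_1)^{2\mu_1}(j+m\alpha_2)^{\mu_2}\bigr)_{\bm{\alpha}\in\mathcal{A},\,\bm{\mu}\in\mathcal{M}}$ whose non-singularity is quoted from \cite[Theorem~3.3]{sauer2004lagrange} and, for the coset $(1,0)$, from \cite[Lemma~4.1]{Han_Jia_1998optimal}. Your closing remarks (uniqueness of the scheme from uniqueness of the mask; reading ``reproduces'' as the algebraic symbol conditions with convergence deferred) are consistent with the paper, but the core of the proof is not carried out.
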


Before proving Proposition \ref{p:jia_interp_M}, we present a constructive example in order to clarify the technical steps of the proof.

\begin{example}
Let $n=3$ and $M = \mathrm{ diag }(2,3)$ (thus $m=3$). We construct a mask $\mathbf{c}_{M,3}$ such that 
\[
\mathrm{c}_{M,3}(\alpha_1,\alpha_2) = 0, \qquad \forall  (\alpha_1,\alpha_2) \notin \set{ (\alpha_1,\alpha_2) \in \ZZ^2 \, : \, 3 \abs{\alpha_1} + 2 \abs{\alpha_2} \leq 19} \subset \set{-5, \ldots, 5} \times \set{-8, \ldots, 8},
\]
and the associated subdivision scheme $\mathcal{S}_{\mathbf{c}_{M,3}}$ with dilation $M$ is interpolatory, symmetric and reproduces polynomials up to degree $2n-1 = 5$. Notice that this size of the support is dictated by the desired polynomial reproduction property of the scheme we want to construct.

\emph{Step 1.} We fix the support of the mask $\mathbf{c}_{M,3}$ (unknown entries are denoted by *)
\[
\footnotesize
\left(
\begin{array}{ccccccccccccccccc}
0 & 0 & 0 & 0 & 0 & 0 & * & * & * & *  & * & 0 & 0 & 0 & 0 & 0 & 0 \\
0 & 0 & 0 & 0 & 0 & * & * & * & * & *  & * & * & 0 & 0 & 0 & 0 & 0 \\
0 & 0 & 0 & * & * & * & * & * & * & *  & * & * & * & * & 0 & 0 & 0 \\
0 & 0 & * & * & * & * & * & * & * & *  & * & * & * & * & * & 0 & 0 \\
* & * & * & * & * & * & * & * & * & *  & * & * & * & * & * & * & * \\
* & * & * & * & * & * & * & * & * & *  & * & * & * & * & * & * & * \\
* & * & * & * & * & * & * & * & * & *  & * & * & * & * & * & * & * \\
0 & 0 & * & * & * & * & * & * & * & *  & * & * & * & * & * & 0 & 0 \\
0 & 0 & 0 & * & * & * & * & * & * & *  & * & * & * & * & 0 & 0 & 0 \\
0 & 0 & 0 & 0 & 0 & * & * & * & * & *  & * & * & 0 & 0 & 0 & 0 & 0 \\
0 & 0 & 0 & 0 & 0 & 0 & * & * & * & *  & * & 0 & 0 & 0 & 0 & 0 & 0 \\
\end{array}
\right).
\]

\emph{Step 2.} We  impose the interpolatory conditions $\mathrm{c}_{M,3}(0,0)=1$, $\mathrm{c}_{M,3}(2 \alpha_1, 3 \alpha_2) = 0$, $\forall \bm{\alpha} = (\alpha_1,\alpha_2) \in \ZZ^2 \setminus \set{\bm{0}}$
\[
\footnotesize
\left(
\begin{array}{ccccccccccccccccc}
0 & 0 & 0 & 0 & 0 & 0 & * & * & * & *  & * & 0 & 0 & 0 & 0 & 0 & 0 \\
0 & 0 & 0 & 0 & 0 & 0 & * & * & 0 & *  & * & 0 & 0 & 0 & 0 & 0 & 0 \\
0 & 0 & 0 & * & * & * & * & * & * & *  & * & * & * & * & 0 & 0 & 0 \\
0 & 0 & 0 & * & * & 0 & * & * & 0 & *  & * & 0 & * & * & 0 & 0 & 0 \\
* & * & * & * & * & * & * & * & * & *  & * & * & * & * & * & * & * \\
* & * & 0 & * & * & 0 & * & * & 1 & *  & * & 0 & * & * & 0 & * & * \\
* & * & * & * & * & * & * & * & * & *  & * & * & * & * & * & * & * \\
0 & 0 & 0 & * & * & 0 & * & * & 0 & *  & * & 0 & * & * & 0 & 0 & 0 \\
0 & 0 & 0 & * & * & * & * & * & * & *  & * & * & * & * & 0 & 0 & 0 \\
0 & 0 & 0 & 0 & 0 & 0 & * & * & 0 & *  & * & 0 & 0 & 0 & 0 & 0 & 0 \\
0 & 0 & 0 & 0 & 0 & 0 & * & * & * & *  & * & 0 & 0 & 0 & 0 & 0 & 0 \\
\end{array}
\right).
\]

\emph{Step 3.} We define the remaining coefficients of $\mathbf{c}_{M,3}$ symmetrically and such that they guarantee the property of polynomial reproduction of polynomials up to degree $2n-1=5$. The latter condition leads to invertible systems of equations (one for each submask). They yield
\[
\footnotesize
\left(
\begin{array}{ccccccccccccccccc}
\smallskip
 0 & 0 & 0 & 0 & 0 & 0 & \frac{1}{256} & \frac{1}{128} & \frac{3}{256} & \frac{1}{128} & \frac{1}{256} & 0 & 0 & 0 & 0 & 0 & 0 \\
\smallskip
 0 & 0 & 0 & 0 & 0 & 0 & 0 & 0 & 0 & 0 & 0 & 0 & 0 & 0 & 0 & 0 & 0 \\
\smallskip
 0 & 0 & 0 & \frac{1}{324} & \frac{5}{1296} & 0 & -\frac{241}{6912} & -\frac{241}{3456} & -\frac{25}{256} & -\frac{241}{3456} & -\frac{241}{6912} & 0 & \frac{5}{1296} & \frac{1}{324} & 0 & 0 & 0 \\
\smallskip
 0 & 0 & 0 & 0 & 0 & 0 & 0 & 0 & 0 & 0 & 0 & 0 & 0 & 0 & 0 & 0 & 0 \\
\smallskip
 \frac{7}{1458} & \frac{4}{729} & 0 & -\frac{121}{2916} & -\frac{605}{11664} & 0 & \frac{20809}{93312} & \frac{20809}{46656} & \frac{75}{128} & \frac{20809}{46656} & \frac{20809}{93312} & 0 & -\frac{605}{11664} & -\frac{121}{2916} & 0 & \frac{4}{729} & \frac{7}{1458} \\
\smallskip
 \frac{7}{729} & \frac{8}{729} & 0 & -\frac{56}{729} & -\frac{70}{729} & 0 & \frac{280}{729} & \frac{560}{729} & 1 & \frac{560}{729} & \frac{280}{729} & 0 & -\frac{70}{729} & -\frac{56}{729} & 0 & \frac{8}{729} & \frac{7}{729} \\
\smallskip
 \frac{7}{1458} & \frac{4}{729} & 0 & -\frac{121}{2916} & -\frac{605}{11664} & 0 & \frac{20809}{93312} & \frac{20809}{46656} & \frac{75}{128} & \frac{20809}{46656} & \frac{20809}{93312} & 0 & -\frac{605}{11664} & -\frac{121}{2916} & 0 & \frac{4}{729} & \frac{7}{1458} \\
\smallskip
 0 & 0 & 0 & 0 & 0 & 0 & 0 & 0 & 0 & 0 & 0 & 0 & 0 & 0 & 0 & 0 & 0 \\
\smallskip
 0 & 0 & 0 & \frac{1}{324} & \frac{5}{1296} & 0 & -\frac{241}{6912} & -\frac{241}{3456} & -\frac{25}{256} & -\frac{241}{3456} & -\frac{241}{6912} & 0 & \frac{5}{1296} & \frac{1}{324} & 0 & 0 & 0 \\
\smallskip
 0 & 0 & 0 & 0 & 0 & 0 & 0 & 0 & 0 & 0 & 0 & 0 & 0 & 0 & 0 & 0 & 0 \\
 0 & 0 & 0 & 0 & 0 & 0 & \frac{1}{256} & \frac{1}{128} & \frac{3}{256} & \frac{1}{128} & \frac{1}{256} & 0 & 0 & 0 & 0 & 0 & 0 \\
\end{array}
\right)
\]
We notice that the main column and row of $\mathbf{c}_{M,3}$ are the univariate binary $\mathbf{a}_{2,3}$ and ternary $\mathbf{a}_{3,3}$ $6$-point Dubuc-Deslauriers masks defined in \eqref{eq:DD_interp_1D}, respectively.
\end{example}

\begin{proof}[Proof of Proposition \ref{p:jia_interp_M}]
Recall by \eqref{eq:Gamma} that
\[
\Gamma = \set{(k,j) \in \NN_0^2 \, : \, k \in \set{0, 1}, \, j \in \set{ 0, \ldots, m-1}}
\]
is a complete set of representatives of the distinct cosets of $\ZZ^2 / M \ZZ^2$.
Every interpolatory mask $\mathbf{a}$ reproduces polynomials up to degree $2n-1$ if and only if it satisfies the sum rules of order $2n$, i.e. by \eqref{eq:sum_rules}
\begin{equation} \label{eq:sum_rules_interp}
\begin{split}
&\sum_{\bm{\alpha} \in \ZZ^2} \, \mathrm{a} \bigl ( k + 2 \alpha_1, j + m \alpha_2 \bigr ) \bigl (  k + 2 \alpha_1 \bigr )^{\mu_1} \, \bigl ( j + m \alpha_2 \bigr )^{\mu_2} = \delta_{\bm{\mu},0}, \\
& (k, j) \in \Gamma \setminus \set{(0,0)}, \qquad \bm{\mu} = (\mu_1, \mu_2) \in \NN_0^2 \, : \, \mu_1 + \mu_2 \leq 2n-1.
\end{split}
\end{equation}
Notice that ($(k,j) = (0,0)$)
\[
\sum_{\bm{\alpha} \in \ZZ^2} \, \mathrm{a} \bigl (2 \alpha_1, m \alpha_2 \bigr ) \bigl ( 2 \alpha_1 \bigr )^{\mu_1} \, \bigl ( m \alpha_2 \bigr )^{\mu_2} = \delta_{\bm{\mu},0}, \qquad \bm{\mu} = (\mu_1, \mu_2) \in \NN_0^2 \, : \, \mu_1 + \mu_2 \leq 2n-1,
\]
is due to the interpolatory property \eqref{eq:mask_interp} of $\mathbf{a}$.
The construction of the mask $\mathbf{c}_{M,n}$ is split in 3 Steps.

\emph{Step 1 (support size).} We set $\mathrm{c}_{M,n} (\alpha_1, \alpha_2) = 0$, $\forall \bm{\alpha} = (\alpha_1, \alpha_2) \in \ZZ^2$ such that 
\[
\abs{\alpha_1} > 2n-1, \qquad \abs{\alpha_2} > mn-1, \qquad m \abs{\alpha_1} + 2 \abs{\alpha_2} > 2mn-2+m.
\]
Thus, condition \emph{(i)} is satisfied. 

\emph{Step 2 (interpolation).} We impose the interpolatory conditions
\[
\mathrm{c}_{M,n}(0,0)=1, \qquad \mathrm{c}_{M,n}(2 \alpha_1, m \alpha_2) = 0, \qquad \forall \bm{\alpha} = (\alpha_1,\alpha_2) \in \ZZ^2 \setminus \set{\bm{0}}. 
\]

\emph{Step 3 (symmetry and reproduction).}  The system of equations in \eqref{eq:sum_rules_interp} naturally splits into $\# \Gamma -1 = 2m - 1$ separate linear systems of equations one for each $(k,j) \in \Gamma' = \Gamma \setminus \set{(0,0)}$.
Imposing the symmetry of $\mathrm{c}_{M,n}$, the number of unknowns in the systems of equations \eqref{eq:sum_rules_interp} for $(k,j) \in \Gamma' \setminus \set{(1,0)}$ can be halfed. The corresponding $n(n+1) \times n(n+1)$ system matrices are invertible, which we prove in \emph{Step 3.a}.
We treat the case $(k,j)=(1,0)$ separately, due to the special symmetry of the corresponding submask. This case is analyzed in \emph{Step 3.b} and the corresponding ${ \displaystyle \frac{n(n+1)}{2} \times\frac{n(n+1)}{2}}$ system matrix is also invertible.

\noindent{\emph{Step 3.a.}} Let $(k,j) \in \Gamma' \setminus \set{(1,0)}$. 

Symmetry in $\alpha_1$: we only need to determine the coefficients $\mathrm{c}_{M,n}(k+ 2\alpha_1, j + m \alpha_2)$ for $\bm{\alpha} = (\alpha_1, \alpha_2) \in \NN_0 \times \ZZ$ and such that \emph{(ii)} is satisfied, i.e.
\begin{equation} \label{eq:set_A}
0 \leq k + 2 \alpha_1 \leq 2n-1, \quad 0 \leq \abs{j + m \alpha_2} \leq mn-1, \quad 0 \leq m (k + 2 \alpha_1) + 2 \abs{j + m \alpha_2} \leq 2mn -2+m.
\end{equation}
We call $\mathcal{A}$ the set of such indices. We first want to determine the geometric structure and the cardinality of $\mathcal{A}$.
First, we focus our attention on the inequality $0 \leq k + 2 \alpha_1 \leq 2n-1$ in \eqref{eq:set_A}. Since $k \in \set{0,1}$, we have
\[ 
0 \leq k + 2 \alpha_1 \leq 2n-1 \quad \iff \quad 0 \leq \alpha_1 \leq n - \bigg \lceil \frac{1+k}{2} \bigg \rceil \quad \iff \quad  0 \leq \alpha_1 \leq n-1.
\]
Then, we focus our attention on the inequality $ 0 \leq \abs{j + m \alpha_2} \leq mn-1$ in \eqref{eq:set_A}. We observe that $j + m \alpha_2 \geq 0 \iff \alpha_2 \geq 0$, thus for every $j \in \set{1, \ldots, m-1}$ we have
\[ 
\begin{array}{cccccc}
\bullet \text{ for } \alpha_2 \geq 0 \, : & 0 \leq j + m \alpha_2 \leq mn-1 & \iff & 0 \leq \alpha_2 \leq n - \bigg \lceil \frac{1+j}{m} \bigg \rceil  & \iff &  0 \leq \alpha_2 \leq n-1, \\
\bullet \text{ for } \alpha_2 < 0 \, : & 0 < - j - m \alpha_2 \leq mn-1 & \iff & 0 < - \alpha_2 \leq n + \bigg \lfloor \frac{j-1}{m} \bigg \rfloor  & \iff &  1 \leq -\alpha_2 \leq n.
\end{array}
\]
Finally, we focus our attention on the last inequality $0 \leq m (k + 2 \alpha_1) + 2 \abs{j + m \alpha_2} \leq 2mn -2+m$ in \eqref{eq:set_A}. Let $\alpha_1 \in \set{0, \ldots, n-1}$. Then, we have
\[
\begin{array}{cccl}
\bullet \text{ for } \alpha_2 \geq 0 \, : & 0 \leq m (k + 2 \alpha_1) + 2 (j + m \alpha_2) \leq 2mn -2+m & \iff & 0 \leq \alpha_2 \leq n - \alpha_1 - \bigg \lceil \frac{1+j}{m} \bigg \rceil + \bigg \lfloor \frac{1-k}{2} \bigg \rfloor, \\
& & \iff & 0 \leq \alpha_2 \leq n - \alpha_1 - 1, \\
\bullet \text{ for } \alpha_2 < 0 \, : & 0 < m (k + 2 \alpha_1) - 2 (j + m \alpha_2) \leq 2mn -2+m & \iff & 0 < -\alpha_2 \leq n - \alpha_1 + \bigg \lfloor \frac{j-1}{m} \bigg \rfloor + \bigg \lfloor \frac{1-k}{2} \bigg \rfloor, \\
& & \iff & 1 \leq - \alpha_2 \leq n - \alpha_1. \\
\end{array}
\]
Combining the above observations, we get
\[
\mathcal{A} = \Set{\bm{\alpha} = (\alpha_1, \alpha_2) \in \NN_0 \times \ZZ \, : \, 0 \leq \alpha_1 \leq n-1, \, \alpha_1 - n \leq \alpha_2 \leq n - \alpha_1 - 1}.
\]
Thus, the cardinality of the set $\mathcal{A}$ is
\[
\# \mathcal{A} = \sum_{\alpha_1=0}^{n-1} 2 (n - \alpha_1) = n (n+1),
\]
i.e. the number of unknowns is $n (n+1)$.
Moreover, \eqref{eq:sum_rules_interp} is automatically satisfied for odd $\mu_1$. Therefore, we solve
\begin{equation} \label{eq:linear_sys}
\sum_{ \bm{\alpha} \in \mathcal{A}} \, \mathrm{a} \bigl ( k + 2 \alpha_1, j + m \alpha_2 \bigr ) \bigl (  k + 2 \alpha_1 \bigr )^{2 \mu_1} \, \bigl (  j + m \alpha_2 \bigr )^{\mu_2} = \frac12 \delta_{\bm{\mu},\bm{0}} + \frac12 \delta_{\bm{\mu},\bm{0}} \delta_{k,0} \delta_{\alpha_1,0} , \qquad \bm{\mu} \in \mathcal{M},
\end{equation}
with $\mathcal{M} = \set{(\mu_1, \mu_2) \in \NN_0^2 \, : \, 0 \leq 2\mu_1 + \mu_2 \leq 2n-1}$.
We notice that $\# \mathcal{M} = \# \mathcal{A} = n(n+1)$, i.e. the corresponding system matrix is indeed a square matrix.

Symmetry in $\alpha_2$: it allows us to reduce the total number of linear systems in \eqref{eq:linear_sys}. Note that the systems in \eqref{eq:linear_sys} for $j \in \set{1, \ldots, \frac{m-1}{2}}$ and $m-j \in \set{\frac{m+1}{2}, \ldots, m-1}$ are equivalent, indeed
\[
\begin{split}
\sum_{ \bm{\alpha} \in \mathcal{A}} \,  \mathrm{a} \bigl ( k + 2 \alpha_1, \,& m-j + m \alpha_2 \bigr ) \bigl (  k + 2 \alpha_1 \bigr )^{2 \mu_1} \, \bigl ( m- j + m \alpha_2 \bigr )^{\mu_2} \\
&= \sum_{ \bm{\alpha} \in \mathcal{A}} \, \mathrm{a} \bigl ( k + 2 \alpha_1, -j + m (\alpha_2+1) \bigr ) \bigl (  k + 2 \alpha_1 \bigr )^{2 \mu_1} \, \bigl ( - j + m (\alpha_2+1) \bigr )^{\mu_2} \\
&= \sum_{ \bm{\beta} \in \mathcal{B}} \, \mathrm{a} \bigl ( k + 2 \beta_1,- (j + m \beta_2) \bigr ) \bigl (  k + 2 \beta_1 \bigr )^{2 \mu_1} \, \bigl ( -( j + m \beta_2) \bigr )^{\mu_2}, \qquad (\beta_1,\beta_2) = (\alpha_1,-\alpha_2-1), \\
&= (-1)^{\mu_2} \, \sum_{ \bm{\beta} \in \mathcal{B}} \, \mathrm{a} \bigl ( k + 2 \beta_1,j + m \beta_2 \bigr ) \bigl (  k + 2 \beta_1 \bigr )^{2 \mu_1} \, \bigl (  j + m \beta_2 \bigr )^{\mu_2},
\end{split}
\]
where
\[
\mathcal{B} = \Set{\bm{\beta} = (\beta_1, \beta_2) \in \NN_0 \times \ZZ \, : \, 0 \leq \beta_1 \leq n-1, \, \beta_1 - n \leq \beta_2 \leq n - \beta_1 - 1}.
\]
Thus, we only need to consider the case $j \in \set{1, \ldots, \frac{m-1}{2}}$. The corresponding square matrix
\[
\Biggl ( (  k + 2 \alpha_1 \bigr )^{2 \mu_1} \, (  j + m \alpha_2 \bigr )^{\mu_2} \Biggr )_{ (\alpha_1, \alpha_2) \in \mathcal{A}, \, (\mu_1, \mu_2) \in \mathcal{M}}
\]
is non-singular \cite[Theorem 3.3]{sauer2004lagrange}. Therefore, for any $j \in \set{1, \ldots, \frac{m-1}{2}}$, the linear system of equations \eqref{eq:linear_sys} is uniquely solvable and its solution is
\[
\Bigl ( \mathrm{c}_{M,n} (k + 2 \alpha_1, j + m \alpha_2 ) \Bigr )_{(\alpha_1, \alpha_2) \in \mathcal{A}}.
\]

\noindent \emph{Step 3.b.} Let $(k,j) = (1,0)$. Due to the symmetry in $\alpha_1$ and $\alpha_2$, \eqref{eq:sum_rules_interp} reduces to the following system of equations
\begin{equation} \label{eq:lin_sys_10}
\sum_{ \bm{\alpha} \in \mathcal{A}'} \, \mathrm{a} \bigl ( 1 + 2 \alpha_1, m \alpha_2 \bigr ) \bigl (  1 + 2 \alpha_1 \bigr )^{2 \mu_1} \, \bigl ( m \alpha_2 \bigr )^{2 \mu_2} = \frac14 \delta_{\bm{\mu},\bm{0}} + \frac14 \delta_{\bm{\mu},\bm{0}} \delta_{\alpha_2,0}, \qquad \bm{\mu} \in \mathcal{M}',
\end{equation}
where
\[
\begin{split}
\mathcal{A}' &= \set{ (\alpha_1, \alpha_2) \in \NN_0^2 \, : \,  0 \leq \alpha_1 \leq n-1, \,  0 \leq \alpha_2 \leq n - \alpha_1 - 1}, \\
\mathcal{M}' &= \set{(\mu_1, \mu_2) \in \NN_0^2 \, : \, 0 \leq 2\mu_1 + 2 \mu_2 \leq 2n-1}.
\end{split}
\]
We notice that $ \# \mathcal{A}' =  \# \mathcal{M}' = n(n+1)/2$. By \cite[Lemma 4.1]{Han_Jia_1998optimal}, the square matrix
\[
\Biggl ( (  1 + 2 \alpha_1 \bigr )^{2 \mu_1} \, ( m \alpha_2 \bigr )^{2 \mu_2} \Biggr )_{(\alpha_1, \alpha_2) \in \mathcal{A}', \, (\mu_1, \mu_2) \in \mathcal{M}'}
\]
is non-singular. Therefore, the linear system of equations \eqref{eq:lin_sys_10} is uniquely solvable and its solution is
\[
\Bigl ( \mathrm{c}_{M,n} (1 + 2 \alpha_1, m \alpha_2 ) \Bigr )_{(\alpha_1, \alpha_2) \in \mathcal{A}'}.
\]
\end{proof}

We notice another special property of the masks $\mathrm{c}_{M,n}$ in Proposition \ref{p:jia_interp_M}.

\begin{remark}
Let $n \in \NN$. Since the mask $\mathbf{a}_{2,n}$ of the univariate binary $(2n)$-point Dubuc-Deslauriers interpolatory subdivision scheme $\mathcal{S}_{\mathbf{a}_{2,n}}$  in Definition \ref{d:DD_interp_1D} satisfies the sum rules of order $2n$, the solution of \eqref{eq:lin_sys_10} is given by
\[
\mathrm{c}_{M,n} (1+2 \alpha_1, m \alpha_2) = 
\begin{cases}
\mathrm{a}_{2,n} (1 + 2 \alpha_1), & \alpha_2 = 0, \\
0, & \alpha_2 \neq 0,
\end{cases} \qquad \bm{\alpha} = (\alpha_1,\alpha_2) \in \mathcal{A}'.
\]
Analogously, the solution of \eqref{eq:linear_sys} for $(0,j) \in \Gamma'$, $j \in \set{1, \ldots, m-1}$, is given by
\[
\mathrm{c}_{M,n} (2 \alpha_1, j + m \alpha_2) = 
\begin{cases}
\mathrm{a}_{m,n} ( j + m \alpha_2), & \alpha_1 = 0, \\
0, & \alpha_1 \neq 0,
\end{cases}
\qquad \bm{\alpha} = (\alpha_1,\alpha_2) \in \mathcal{A},
\]
where $\mathbf{a}_{m,n}$ is the mask of the univariate $m$-arity $(2n)$-point Dubuc-Deslauriers interpolatory subdivision scheme $\mathcal{S}_{\mathbf{a}_{m,n}}$ in Definition \ref{d:DD_interp_1D}. 
\end{remark}

We now show that the masks in Definition \ref{d:interp_anisotropic} and the ones obtained in Proposition \ref{p:jia_interp_M} actually coincide.

\begin{theorem} \label{p:Jia_our}
Let $n \in \NN$ and $M$ in \eqref{eq:dilation}. The mask $\mathbf{a}_{M,n}$ of the anisotropic interpolatory subdvision scheme $\mathcal{S}_{\mathbf{a}_{M,n}}$ in Definition \ref{d:interp_anisotropic} satisfies
\begin{enumerate}
\item[(i)] $\mathbf{a}_{M,n}$ has support 
\[
\set{ (\alpha_1,\alpha_2) \in \ZZ^2 \, : \, m \abs{\alpha_1} + 2 \abs{\alpha_2} \leq 2mn-2+m} \subset \set{1-2n, \ldots, 2n-1} \times \set{1-mn, \ldots, mn-1},
\]
\item[(ii)] $\mathbf{a}_{M,n}$ is symmetric,
\item[(iii)] $\mathcal{S}_{\mathbf{a}_{M,n}}$ reproduces polynomials up to degree $2n-1$.
\end{enumerate}
\end{theorem}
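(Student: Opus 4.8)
The plan is to check items \emph{(i)}--\emph{(iii)} for the mask $\mathbf{a}_{M,n}$ one at a time, relying on what is already known about $\mathcal{S}_{\mathbf{a}_{M,n}}$ and about the univariate Dubuc--Deslauriers schemes, and then to invoke the uniqueness part of Proposition \ref{p:jia_interp_M} to identify $\mathbf{a}_{M,n}$ with $\mathbf{c}_{M,n}$. Item \emph{(iii)} needs no new argument: it is precisely Proposition \ref{p:anisotropic_repr}. Item \emph{(ii)} follows from the product form of the symbol \eqref{eq:interp_anisotropic}: each univariate mask $\mathbf{a}_{2,j},\mathbf{a}_{m,j}$ is symmetric, so $a_{2,j}(z)=a_{2,j}(z^{-1})$ and $a_{m,j}(z)=a_{m,j}(z^{-1})$, hence every summand $a_{2,n-k}(z_1)a_{m,k+1}(z_2)$ and $a_{2,n-k-1}(z_1)a_{m,k+1}(z_2)$ is invariant under $z_1\mapsto z_1^{-1}$ and, independently, under $z_2\mapsto z_2^{-1}$; therefore $a_{M,n}(z_1^{\pm1},z_2^{\pm1})=a_{M,n}(z_1,z_2)$, which is the four-fold symmetry required in \emph{(ii)}.

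For item \emph{(i)} I would argue by support bookkeeping. Since $\mathbf{a}_{2,j}$ is supported on $\{1-2j,\dots,2j-1\}$ and $\mathbf{a}_{m,j}$ on $\{1-mj,\dots,mj-1\}$, the $k$-th term of the first sum in \eqref{eq:interp_anisotropic} is supported in the rectangle $R_k:=\{1-2(n-k),\dots,2(n-k)-1\}\times\{1-m(k+1),\dots,m(k+1)-1\}$ for $k=0,\dots,n-1$; the $k$-th term of the subtracted sum is supported in $\{1-2(n-k-1),\dots,2(n-k-1)-1\}\times\{1-m(k+1),\dots,m(k+1)-1\}\subseteq R_k$, so the subtraction never enlarges the support, and $\mathrm{supp}\,\mathbf{a}_{M,n}\subseteq\bigcup_{k=0}^{n-1}R_k$. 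A one-line computation then gives that every $(\alpha_1,\alpha_2)\in R_k$ satisfies $m|\alpha_1|+2|\alpha_2|\le m(2(n-k)-1)+2(m(k+1)-1)=2mn-2+m$ as well as $|\alpha_1|\le 2n-1$ and $|\alpha_2|\le mn-1$, which yields the stated inclusions. If one also wants the diamond bound to be sharp, one checks that each corner $(\pm(2(n-k)-1),\pm(m(k+1)-1))$ of $R_k$ lies on the edge $m|\alpha_1|+2|\alpha_2|=2mn-2+m$ and belongs to no other $R_{k'}$, so the corresponding coefficient of $\mathbf{a}_{M,n}$ is the product of the (nonzero) extreme coefficients of $\mathbf{a}_{2,n-k}$ and $\mathbf{a}_{m,k+1}$, hence nonzero.

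With \emph{(i)}--\emph{(iii)} in hand and recalling from Proposition \ref{p:anisotropic_interp} that $\mathcal{S}_{\mathbf{a}_{M,n}}$ is interpolatory, the uniqueness assertion of Proposition \ref{p:jia_interp_M} immediately forces $\mathbf{a}_{M,n}=\mathbf{c}_{M,n}$, which is the real content of the theorem: Definition \ref{d:interp_anisotropic} supplies a closed formula for the minimally supported anisotropic interpolatory masks. I do not expect any serious obstacle here; the only point requiring care is the support accounting in \emph{(i)}, and in particular the observation that the second (subtracted) sum in \eqref{eq:interp_anisotropic} contributes only inside the rectangles already produced by the first sum, so it cannot widen the support beyond the diamond $m|\alpha_1|+2|\alpha_2|\le 2mn-2+m$.
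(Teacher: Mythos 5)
Your proposal is correct and follows essentially the same route as the paper: symmetry of the univariate Dubuc--Deslauriers masks gives \emph{(ii)}, Proposition \ref{p:anisotropic_repr} gives \emph{(iii)}, and the same rectangle-by-rectangle support bookkeeping (with the subtracted sum contributing only inside the rectangles of the first sum) gives \emph{(i)}. Your additional remarks on sharpness at the corners and on invoking the uniqueness of Proposition \ref{p:jia_interp_M} to identify $\mathbf{a}_{M,n}$ with $\mathbf{c}_{M,n}$ are consistent with, and make explicit, what the paper intends.
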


\begin{proof}
\noindent \emph{Step 1.} Condition \emph{(ii)} follows directly from Definition \ref{d:interp_anisotropic} and from the symmetry of the univariate masks $\mathbf{a}_{2,h}, \, \mathbf{a}_{m,h}$, $h=1, \ldots, n$, in Definition \ref{d:DD_interp_1D}. 

\noindent \emph{Step 2.} Condition \emph{(iii)} follows directly from Proposition \ref{p:anisotropic_repr}.

\noindent \emph{Step 3.} We focus our attention on condition \emph{(i)}. Let $h \in \set{1, \ldots, n}$. The univariate masks $\mathbf{a}_{2,h}, \, \mathbf{a}_{m,h}$, in Definition \ref{d:DD_interp_1D} have supports $\set{1-2h, \ldots, 2h-1}$ and $\set{1-mh, \ldots, mh-1}$, respectively. Thus, for $k = 0, \ldots, n-1$, the masks associated to the symbols 
\[
a_{2,n-k} (z_1) \, a_{m,k+1} (z_2), \qquad (z_1,z_2) \in (\CC \setminus \set{0})^2,
\]
of the first sum in \eqref{eq:interp_anisotropic} have support $S_k = \set{1-2(n-k), \ldots, 2(n-k)-1} \times \set{1-m(k+1), \ldots, m(k+1)-1}$. Moreover,  for $k = 0, \ldots, n-1$, we have
\[
m \abs{\alpha_1} + 2 \abs{\alpha_2} \leq  m \bigl ( 2(n-k) - 1 \bigr ) + 2 \bigl ( m(k+1) - 1 \bigr) = 2mn - 2 + m, \qquad (\alpha_1,\alpha_2) \in S_k.
\]
Thus, the mask associated to the symbol
\[
\sum_{k=0}^{n-1} a_{2,n-k} (z_1) \, a_{m,k+1} (z_2), \qquad (z_1,z_2) \in (\CC \setminus \set{0})^2,
\]
 in \eqref{eq:interp_anisotropic} has support
\[
\mathcal{A} = \set{ (\alpha_1,\alpha_2) \in \ZZ^2 \, : \, m \abs{\alpha_1} + 2 \abs{\alpha_2} \leq 2mn-2+m} \subset \set{1-2n, \ldots, 2n-1} \times \set{1-mn, \ldots, mn-1}.
\]
Using the same argument, the support of the mask associated to the symbol
\[
\sum_{k=0}^{n-2} a_{2, n-k-1} (z_1) \, a_{m,k+1} (z_2), \qquad (z_1,z_2) \in ( \CC \setminus \set{0})^2,
\]
 in \eqref{eq:interp_anisotropic} is contained in $\mathcal{A}$, so that the support of the mask $\mathbf{a}_{M,n}$ satisfies \emph{(i)}.
\end{proof}

%%%%%%%%%%%%%%%%%%%%%%%%%%%
\subsection{Convergence of certain $\mathcal{S}_{\mathbf{a}_{M,n}}$}
\label{ssec:convergence}
%%%%%%%%%%%%%%%%%%%%%%%%%%%

In this section, we only analyze convergence of the schemes used in section \ref{sec:examples}.
In \cite{Charina2017smoothness}, Charina and Protasov presented a detailed regularity analysis of $d$-variate anisotropic subdivision schemes. Especially, their results allow us to use the algorithm in \cite{Guglielmi2013exact} for the exact computation of the H{\"o}lder regularity of an anisotropic subdivision scheme.

\begin{definition}
A convergent subdivision scheme $S_{\mathbf{p}}$  with dilation $M$ and mask $\mathbf{p}$ has H{\"o}lder regularity $\alpha_{\phi}  \in (0,1]$ if its basic limit function $\phi$ has H{\"o}lder exponent $\alpha_{\phi}$, namely
\[
\alpha_{\phi} = \sup \set{\alpha \in (0,1] \, : \, \norm{ \phi( \cdot + \mathbf{h}) - \phi }_{C (\RR^d)} \leq C \norm{\mathbf{h}}^{\alpha}, \, \mathbf{h} \in \RR^d}.
\]
\end{definition}

\noindent The main ingredient of the regularity analysis in \cite{Charina2017smoothness} is the so-called \emph{joint spectral radius} \cite{Rota1960note}, which is a generalization of the classical notion of spectral radius of one square matrix to a compact set of square matrices. 

\begin{definition} \label{d:jsr}
Let $\mathcal{V} = \set{V_1, \ldots, V_d} \subset \RR^{N \times N}$, $N \in \NN$, be a finite set of square matrices. The joint spectral radius of $\mathcal{V}$ is defined by
\[
\rho (\mathcal{V}) = \lim_{k \to \infty} \, \max \Set{ \norm{ V_{j_1} \cdots V_{j_k}}^{1/k} \, : \, V_{j_i} \in \mathcal{V}, \, i = 1, \ldots, k}.
\]
\end{definition} 

\noindent The limit in Definition \ref{d:jsr} always exists and does not depend on the choice of the matrix norm (\cite{Rota1960note}). For practical interest (see Section \ref{sec:examples}), we check the continuity and compute the H{\"o}lder regularity of some elements of the family $\mathcal{S}_{\mathbf{a}_{M,n}}$ with $M = {\displaystyle \begin{pmatrix} 2 & 0 \\ 0 & m \end{pmatrix}}$, $m = 3,5$. To do so, we first define the set $\mathcal{V}$. The size of the elements of $\mathcal{V}$ depends on the support of the basic limit function and the cardinality of the set $\Omega$ in \eqref{eq:Omega}.

Let $n \in \NN$ and $\phi$ be the basic limit function of the anisotropic interpolatory subdvision schemes $\mathcal{S}_{\mathbf{a}_{M,n}}$ in Definition \ref{d:interp_anisotropic}. 
By \cite[Proposition 2.2 and (2.7)-(2.8)]{Cabrelli2004self}, we have
\begin{equation} \label{eq:set_K}
\text{ supp } \phi \subseteq  K = \Set{ \mathbf{x} \in \RR^2 \, : \, \mathbf{x} = \sum_{j=1}^{\infty} M^{-j} \bm{\alpha}_{k_j}, \, \bm{\alpha}_{k_j} \in \text{ supp } \mathbf{a}_{M,n} }\subset \RR^2. 
\end{equation}
Since $\phi$ is compactly supported, $K$ is a compact set. Thus, due to \cite[Lemma 2.3]{Cabrelli2004self}, there exists a minimal set $\Omega \subset \ZZ^2$ such that
\begin{equation} \label{eq:Omega}
K \subset \Omega + [0,1]^2 = \bigcup_{\omega \in \Omega} (\omega + [0,1]^2).
\end{equation}
The minimality of $\Omega$ reads as follows: if there exists $\tilde{\Omega} \subset \ZZ^2$ such that $K \subset \tilde{\Omega} + [0,1]^2$, then $\tilde{\Omega} \supseteq \Omega$. We refer to \cite{Cabrelli2004self} for more details.

Let $N = \# \Omega$. By \eqref{eq:Gamma}, $\Gamma = \set{(k,j) \in \NN_0^2 \, : \, k \in \set{0, 1}, \, j \in \set{0, \ldots, m-1}}$ is a complete set of representatives of the distinct cosets of $\ZZ^2 / M \ZZ^2$. Notice that $\# \Gamma = 2m$.
For every $\bm{\gamma} \in \Gamma$, we define the transition matrix
\[
T_{\bm{\gamma}} = \Bigl ( \mathrm{a}_{M,n} (M \bm{\alpha} - \bm{\beta} + \bm{\gamma} ) \Bigr )_{\bm{\alpha}, \bm{\beta} \in \Omega}.
\]
We denote $\mathcal{T} = \set{T_{\bm{\gamma}} \, : \, \bm{\gamma} \in \Gamma}$ the set of all the transition matrices. Notice that $\# \mathcal{T} = 2m$. 
For every $\bm{\gamma} \in \Gamma$, the rows and columns of $T_{\bm{\gamma}}$ are enumerated by the elements from the set $\Omega$, thus $T_{\bm{\gamma}} \in \RR^{N \times N}$. By construction, the entries of any column of $T_{\bm{\gamma}}$ sum up to 1, thus $T_{\bm{\gamma}}$ has eigenvalue 1 (i.e. there exist $\mathbf{v}_0 \in \RR^N$ such that $T_{\bm{\gamma}} \mathbf{v}_0 = \mathbf{v}_0$). This property of $\mathcal{T}$ implies (\cite{Cabrelli2004self, Jia1996subdivision}) the existence of certain invariant subspaces of $\mathcal{T}$ crucial for the definition of the set $\mathcal{V}$. To determine these invariant subspaces, we define the vector-valued function
\begin{equation} \label{eq:phi_vector}
\begin{array}{ccl}
v \colon [0,1]^2 & \to &\RR^N, \\
\mathbf{x} & \mapsto & \Bigl ( \phi ( \mathbf{x} + \omega) \Bigr )_{\omega \in \Omega}.
\end{array}
\end{equation}
Now we are able to define the following subspaces of $\RR^N$
\begin{equation} \label{eq:spaces_U_U1_U2}
\begin{split}
U     &= \text{ span } \Set{v(\mathbf{y}) - v(\mathbf{x}) \, : \, \mathbf{x},\mathbf{y} \in [0,1]^2 }, \\
U_1 &= \text{ span } \Set{v(\mathbf{y}) - v(\mathbf{x}) \, : \, \mathbf{x},\mathbf{y} \in [0,1]^2, \, \mathbf{y}-\mathbf{x} = (\alpha, 0), \, \alpha \in \RR}, \\
U_2 &= \text{ span } \Set{v(\mathbf{y}) - v(\mathbf{x}) \, : \, \mathbf{x},\mathbf{y} \in [0,1]^2, \, \mathbf{y}-\mathbf{x} = (0, \beta), \, \beta \in \RR}, \\
\end{split}
\end{equation}
invariant under $\mathcal{T}$.
Notice that $U_1, \, U_2$ contain differences in the directions of the eigenvectors of $M$.

Finally, we define
\[
\mathcal{V} = \mathcal{T} |_{U} = \Set{T_{\bm{\gamma}} |_U \, : \, \bm{\gamma} \in \Gamma}, \qquad \mathcal{V}_1 = \mathcal{V} |_{U_1}, \qquad \mathcal{V}_2 = \mathcal{V} |_{U_2}.
\]
The following statement is a direct consequence of Theorem 1 in \cite{Charina2017smoothness}.

\begin{theorem} \label{t:holder_regularity}
Let $n \in \NN$. The basic limit function $\phi$ of the anisotropic interpolatory subdvision scheme $\mathcal{S}_{\mathbf{a}_{M,n}}$ in Definition \ref{d:interp_anisotropic} belongs to $C(\RR^2)$ if and only if $\rho (\mathcal{V}) < 1$. In this case, $\phi$ has the H{\"o}lder exponent
\[
\alpha_{\phi} = \min \Set{ \log_{1/2} \, \rho (\mathcal{V}_1 ), \, \log_{1/m} \, \rho (\mathcal{V}_2 )}.
\]
\end{theorem}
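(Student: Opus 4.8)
The plan is to verify that $\mathcal{S}_{\mathbf{a}_{M,n}}$ falls exactly under the hypotheses of Theorem 1 in \cite{Charina2017smoothness} and then to rewrite its conclusion in the notation introduced above; beyond this identification of objects there is essentially nothing to compute.

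First I would collect the structural facts already at hand. The dilation $M$ in \eqref{eq:dilation} is diagonal with real eigenvalues $2$ and $m$, both strictly larger than $1$, whose eigenspaces are the coordinate axes $\mathrm{span}\{e_1\}$ and $\mathrm{span}\{e_2\}$; this is precisely the class of anisotropic dilations treated in \cite{Charina2017smoothness}. The mask $\mathbf{a}_{M,n}$ is finitely supported by Theorem \ref{p:Jia_our}(i), so the refinement equation \eqref{eq:ref_eq} admits a unique compactly supported solution $\phi$, with $\mathrm{supp}\,\phi \subseteq K$ as in \eqref{eq:set_K}; passing to the minimal set $\Omega$ in \eqref{eq:Omega} produces the finite family of transition matrices $\mathcal{T} = \set{T_{\bm{\gamma}} \,:\, \bm{\gamma} \in \Gamma}$. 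Since $\mathcal{S}_{\mathbf{a}_{M,n}}$ reproduces polynomials up to degree $2n-1 \geq 1$ (Proposition \ref{p:anisotropic_repr}), its mask satisfies the sum rules of order at least one, i.e. $\sum_{\bm{\alpha} \in \ZZ^2} \mathrm{a}_{M,n}(\bm{\gamma} + M\bm{\alpha}) = 1$ for every $\bm{\gamma} \in \Gamma$; this is exactly the condition ensuring that the columns of each $T_{\bm{\gamma}}$ sum to $1$, hence that $\mathcal{T}$ has the common eigenvalue $1$ and that the difference space $U$ and its directional restrictions $U_1,U_2$ in \eqref{eq:spaces_U_U1_U2} are well defined and $\mathcal{T}$-invariant.

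Next I would match the three objects of \cite{Charina2017smoothness} with $U$, $U_1$, $U_2$. There, the continuity of $\phi$ is controlled by the joint spectral radius of the transition operators restricted to the span of all differences $v(\mathbf{y})-v(\mathbf{x})$, and the directional (anisotropic) H\"older exponent along each eigendirection of $M$ by the joint spectral radius of the same operators restricted to the span of differences taken along that eigendirection. Because $M = \mathrm{diag}(2,m)$, these eigendirections are the coordinate axes, so the relevant spaces are precisely $U$, $U_1$, $U_2$ and the restricted families are precisely $\mathcal{V}$, $\mathcal{V}_1$, $\mathcal{V}_2$. Theorem 1 in \cite{Charina2017smoothness} then yields that $\phi \in C(\RR^2)$ if and only if $\rho(\mathcal{V}) < 1$, and, in that case, that $\phi$ is H\"older continuous with exponent $\log_{1/\lambda_i}\rho(\mathcal{V}|_{U_i})$ in the $i$-th eigendirection, the global exponent being the minimum over $i$. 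Substituting $\lambda_1 = 2$ and $\lambda_2 = m$ gives $\alpha_{\phi} = \min\set{\log_{1/2}\rho(\mathcal{V}_1),\ \log_{1/m}\rho(\mathcal{V}_2)}$, as claimed.

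The only delicate point, and the step I expect to require the most care, is the verification that the subspaces $U_1$, $U_2$ built here from axis-parallel differences $v(\mathbf{y})-v(\mathbf{x})$ genuinely coincide with the eigen-difference subspaces used in \cite{Charina2017smoothness}, and that the non-degeneracy assumptions of that theorem (those making the joint spectral radius formula an exact equality rather than merely an upper bound) are met by $\mathcal{S}_{\mathbf{a}_{M,n}}$. Both follow from the fact that $M$ is diagonal, so its eigendirections are the coordinate axes, together with the sum-rule order $\geq 1$ recorded above and the interpolatory normalization $\mathrm{a}_{M,n}(\bm{0}) = 1$ from \eqref{eq:mask_interp}; with this in place the statement reduces to a direct citation.
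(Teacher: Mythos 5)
Your proposal is correct and follows essentially the same route as the paper: the paper likewise establishes the theorem as a direct consequence of Theorem 1 in the cited work of Charina and Protasov, after setting up the support set $K$, the minimal set $\Omega$, the transition matrices $\mathcal{T}$, and the invariant difference spaces $U$, $U_1$, $U_2$ exactly as you describe. Your additional remarks on verifying the sum rules of order one and matching $U_1$, $U_2$ with the eigen-direction difference spaces of the diagonal dilation are precisely the (implicit) checks the paper relies on.
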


\noindent In order to properly end this section, we would like to answer a few questions which naturally arise from reading of the above analysis:
\begin{itemize}
\item[\emph{Q1.}] How to determine the spaces $U, \, U_1$ and $U_2$ ($\phi$ is not known analytically)?
\item[\emph{Q2.}] How to determine the sets $\mathcal{V}, \mathcal{V}_1$ and $\mathcal{V}_2$?
\end{itemize}
The questions \emph{Q1.} and \emph{Q2.} will be answered in the following Example.

\begin{example} \label{ex:holder}
Let $n=1$ and $M = {\displaystyle \begin{pmatrix} 2 & 0 \\ 0 & 3 \end{pmatrix}}$. The anisotropic interpolatory subdvision scheme $\mathcal{S}_{\mathbf{a}_{M,1}}$ in Definition \ref{d:interp_anisotropic} has the mask
\[
\mathbf{a}_{M,1} = \frac16 \, 
\left(
\begin{array}{ccccc}
 1 & 2 & 3 & 2 & 1 \\
 2 & 4 & 6 & 4 & 2 \\
 1 & 2 & 3 & 2 & 1 \\
\end{array}
\right).
\]
Since the support of the basic limit function is a subset of $[-1,1]^2$ (see Figure \ref{fig:supp_phi} and \eqref{eq:set_K}), we determine $\Omega = \set{-1,0}^2$, $N = \# \Omega = 4$.
\begin{figure}
\centering
\includegraphics[scale =0.6,trim={1cm 0.5cm 1cm 0.5cm},clip]{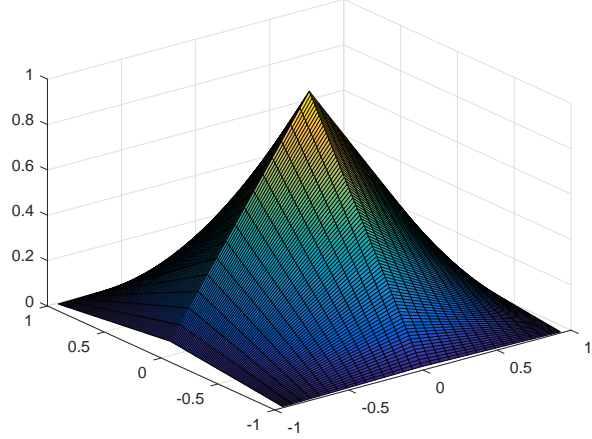}
\caption{Basic limit function of the anisotropic interpolatory subdvision schemes $\mathcal{S}_{\mathbf{a}_{M,1}}$, $M= \mathrm{ diag }(2,3)$.}
\label{fig:supp_phi}
\end{figure} 
For 
\[
\Gamma = \Set{
\begin{pmatrix} 0 \\ 0 \end{pmatrix},
\begin{pmatrix} 0 \\ 1 \end{pmatrix},
\begin{pmatrix} 0 \\ 2 \end{pmatrix},
\begin{pmatrix} 1 \\ 0 \end{pmatrix},
\begin{pmatrix} 1 \\ 1 \end{pmatrix},
\begin{pmatrix} 1 \\ 2 \end{pmatrix}}, \qquad \# \Gamma = 6,
\]
the corresponding transition matrices are
\[
\begin{split}
&T_{(0,0)} = \frac16
\left(
\begin{array}{cccc}
 1 & 0 & 0 & 0 \\
 2 & 3 & 0 & 0 \\
 1 & 0 & 2 & 0 \\
 2 & 3 & 4 & 6 \\
\end{array}
\right), \qquad
T_{(0,1)} = \frac16
\left(
\begin{array}{cccc}
 2 & 1 & 0 & 0 \\
 1 & 2 & 0 & 0 \\
 2 & 1 & 4 & 2 \\
 1 & 2 & 2 & 4 \\
\end{array}
\right), \qquad
T_{(0,2)} = \frac16
\left(
\begin{array}{cccc}
 3 & 2 & 0 & 0 \\
 0 & 1 & 0 & 0 \\
 3 & 2 & 6 & 4 \\
 0 & 1 & 0 & 2 \\
\end{array}
\right), \\
&T_{(1,0)} = \frac16
\left(
\begin{array}{cccc}
 2 & 0 & 1 & 0 \\
 4 & 6 & 2 & 3 \\
 0 & 0 & 1 & 0 \\
 0 & 0 & 2 & 3 \\
\end{array}
\right), \qquad
T_{(1,1)} = \frac16
\left(
\begin{array}{cccc}
 4 & 2 & 2 & 1 \\
 2 & 4 & 1 & 2 \\
 0 & 0 & 2 & 1 \\
 0 & 0 & 1 & 2 \\
\end{array}
\right), \qquad
T_{(1,2)} = \frac16
\left(
\begin{array}{cccc}
 6 & 4 & 3 & 2 \\
 0 & 2 & 0 & 1 \\
 0 & 0 & 3 & 2 \\
 0 & 0 & 0 & 1 \\
\end{array}
\right).
\end{split}
\]

Let us compute the spaces $U, \, U_1$ and $U_2$.

\noindent \emph{Space $U$:} the transition matrix $T_{(0,0)}$ has eigenvalue 1 with respect to the eigenvector $\mathbf{v}_0 = (0,0,0,1)^T$. To determine $U$, we proceed as follows.
\begin{itemize}
\item[\emph{Step 1.}] We define $W^{(1)} = \text{ span } \Set{T_{\bm{\gamma}} \mathbf{v}_0 -  \mathbf{v}_0 \, : \, \bm{\gamma} \in \Gamma \setminus \set{\bm{0}}}$.
\item[\emph{Step 2.}] We compute recursively
\[
W^{(k+1)} = W^{(k)} \cup \text{ span } \Set{T_{\bm{\gamma}} \mathbf{w}^{(k)} \, : \,  \mathbf{w}^{(k)} \in W^{(k)}, \, \bm{\gamma} \in \Gamma }, \quad 1 \leq k < N-1
\] 
until $\text{ dim } (W^{(k)} )< \text{ dim } ( W^{(k+1)} )$.
\item[\emph{Step 3.}] We define $U = W^{(k)}$.
\end{itemize}

\noindent Notice that the constraint $1 \leq k < N-1$ makes sense since 
\[
U \subseteq W = \set{\mathbf{w} = (w_1, \ldots, w_N) \in \RR^N \, : \, \sum_{i=1}^N w_i = 0} = \set{\mathbf{w} \in \RR^N \, : \, \mathbf{w} \perp \mathbf{1}}, \qquad \text{ dim } W=N-1.
\]

\noindent In our case, 
\[
U = \text{ span } \Set{T_{(0,1)} \mathbf{v}_0 -  \mathbf{v}_0, T_{(1,0)} \mathbf{v}_0 -  \mathbf{v}_0, T_{(1,1)} \mathbf{v}_0 -  \mathbf{v}_0} = \text{ span } \Set{
\left (
\begin{array}{r}
1 \\ 0 \\ 0 \\ -1
\end{array}
\right ),
\left (
\begin{array}{r}
0 \\ 1 \\ 0 \\ -1 
\end{array}
\right ),
\left (
\begin{array}{r}0 \\ 0 \\ 1 \\ -1
\end{array}
\right )
}, \qquad \text{ dim } U = 3.
\] 

\noindent \emph{Space $U_1$:} In order to determine $U_1$, we use the algorithm above with a different starting vector. By \eqref{eq:spaces_U_U1_U2}, we compute $\mathbf{v}_0 = v((1,0)) - v((0,0))$, $\norm{\mathbf{v}_0}_1 = 1$. By definition \eqref{eq:phi_vector} and due to $\phi(\bm{\alpha}) = \delta_{\bm{\alpha},\bm{0}}$, $\forall \, \bm{\alpha} \in \ZZ^2$, we have
\begin{gather*}
\arraycolsep=3pt\def\arraystretch{1.4}
 v((1,0)) = \Bigl ( \phi \bigl ( (1,0) + \bm{\omega} \bigr ) \Bigr )_{\bm{\omega} \in \Omega} = 
\left (
\begin{array}{c}
\phi \bigl ( (1,0) + (-1,-1) \bigr ) \\
\phi \bigl ( (1,0) + (-1,0) \bigr ) \\
\phi \bigl ( (1,0) + (0,-1) \bigr ) \\
\phi \bigl ( (1,0) + (0,0) \bigr )
\end{array}
\right ) 
=
\begin{pmatrix}
0 \\ 1 \\ 0 \\ 0
\end{pmatrix} \in \RR^4, \\
 v((0,0)) = \Bigl ( \phi (\bm{\omega} ) \Bigr )_{\bm{\omega} \in \Omega} = 
\left (
\begin{array}{c}
\phi \bigl ( (-1,-1) \bigr ) \\
\phi \bigl ( (-1,0) \bigr ) \\
\phi \bigl ( (0,-1) \bigr ) \\
\phi \bigl ( (0,0) \bigr )
\end{array}
\right ) 
=
\begin{pmatrix}
0 \\ 0 \\ 0 \\ 1
\end{pmatrix} \in \RR^4.
\end{gather*}
Similarly to the computation of $U$ with $\mathbf{v}_0 = (0, 1/2, 0, -1/2)^T$, we obtain
\[
U_1 = \text{ span } \Set{T_{(0,1)} \mathbf{v}_0 -  \mathbf{v}_0, T_{(0,2)} \mathbf{v}_0 -  \mathbf{v}_0} = \text{ span } \Set{
\left (
\begin{array}{r}
1 \\ 0 \\ -1 \\ 0
\end{array}
\right),
\left (
\begin{array}{r}
0 \\ 1 \\ 0 \\ -1
\end{array}
\right)
}, \qquad \text{ dim } U_1 = 2.
\]

\noindent \emph{Space $U_2$:} In order to determine $U_2$, by \eqref{eq:spaces_U_U1_U2}, we compute $\mathbf{v}_0 = v((0,1)) - v((0,0))$, $\norm{\mathbf{v}_0}_1 = 1$. Following the procedure described for the construction of $U_1$, we have $\mathbf{v}_0 = (0, 0, 1/2, -1/2)^T$ and
\[
U_2 = \text{ span } \Set{T_{(0,1)} \mathbf{v}_0 -  \mathbf{v}_0, T_{(1,0)} \mathbf{v}_0 -  \mathbf{v}_0} = \text{ span } \Set{
\left (
\begin{array}{r}
1 \\ -1 \\ 0 \\ 0
\end{array}
\right),
\left (
\begin{array}{r}
0 \\ 0 \\ 1 \\ -1
\end{array}
\right)
}, \qquad \text{ dim } U_2 = 2.
\]
Note that $U = U_1 \bigcup U_2$.

Let us compute the set $\mathcal{V}$.

\noindent \emph{Step 1.} We extend $U \subset \RR^4$, $ \text{ dim } U = 3$, to a subspace $\tilde{U} \subset \RR^4$, $ \text{dim } \tilde{U} = 4$, adding 1 linearly independent vector to $U$, namely
\[
\tilde{U} = \text{ span } \Set{
\left (
\begin{array}{r}
1 \\ 0 \\ 0 \\ -1 
\end{array}
\right),
\left (
\begin{array}{r}
0 \\ 1 \\ 0 \\ -1 
\end{array}
\right),
\left (
\begin{array}{r}
0 \\ 0 \\ 1 \\ -1 
\end{array}
\right),
\left (
\begin{array}{r}
1 \\ 0 \\ 0 \\ 0 
\end{array}
\right)
}.
\]

\noindent \emph{Step 2.} We define the matrix $S \in \RR^{4 \times 4}$, whose columns are $4$ linearly independent elements of $\tilde{U}$, i.e.
\[
S = \left(
\begin{array}{rrrr}
 1 & 0 & 0 & 1 \\
 0 & 1 & 0 & 0 \\
 0 & 0 & 1 & 0 \\
 -1 & -1 & -1 & 0 \\
\end{array}
\right).
\]
 By construction, the matrix $S$ is invertible and, thus, we can compute the matrices 
\[
B_{\bm{\gamma}} = S^{-1} T_{\bm{\gamma}} S \in \RR^{N \times N}, \qquad \bm{\gamma} \in \Gamma.
\]
The matrices $B_{\bm{\gamma}}$ have a block structure. More precisely, the square upper-left block of $B_{\bm{\gamma}}$ of size $\text{dim } U \times \text{dim } U$ is the restriction of $T_{\bm{\gamma}}$ to $U$, namely
\[
\begin{array}{ccc}
\medskip
B_{(0,0)} = \frac16
\left(
\begin{array}{rrrr}
\tikzmark{left} 1 & 0 & 0 & -5 \\
 2 & 3 & 0 & 2 \\
 1 & 0 & 2 \tikzmark{right} & 1 \\
 0 & 0 & 0 & 6 \\
\end{array} \DrawBox[thick]
\right), &
B_{(0,1)} = \frac16
\left(
\begin{array}{rrrr}
\tikzmark{left} 2 & 1 & 0 & -4 \\
 1 & 2 & 0 & 1 \\
 0 & -1 & 2 \tikzmark{right} & 2 \\
 0 & 0 & 0 & 6 \\
\end{array}  \DrawBox[thick]
\right), &
B_{(0,2)} = \frac16
\left(
\begin{array}{rrrr}
\tikzmark{left} 3 & 2 & 0 & -3 \\
 0 & 1 & 0 & 0 \\
 -1 & -2 & 2 \tikzmark{right} & 3 \\
 0 & 0 & 0 & 6 \\
\end{array}  \DrawBox[thick]
\right), \\
B_{(1,0)} = \frac16
\left(
\begin{array}{rrrr}
\tikzmark{left} 2 & 0 & 1 & -4 \\
 1 & 3 & -1 & 4 \\
 0 & 0 & 1 \tikzmark{right} & 0 \\
 0 & 0 & 0 & 6 \\
\end{array}  \DrawBox[thick]
\right), &
B_{(1,1)} = \frac16
\left(
\begin{array}{rrrr}
\tikzmark{left} 3 & 1 & 1 & -2 \\
 0 & 2 & -1 & 2 \\
 -1 & -1 & 1 \tikzmark{right} & 0 \\
 0 & 0 & 0 & 6 \\
\end{array}  \DrawBox[thick]
\right), &
B_{(1,2)} = \frac16
\left(
\begin{array}{rrrr}
\tikzmark{left} 4 & 2 & 1 & 0 \\
 -1 & 1 & -1 & 0 \\
 -2 & -2 & 1 \tikzmark{right} & 0 \\
 0 & 0 & 0 & 6 \\
\end{array}  \DrawBox[thick]
\right).
\end{array}
\]
 Finally, $\mathcal{V}$ is the set of the restrictions of $T_{\bm{\gamma}}$, $\bm{\gamma} \in \Gamma$, to $U$. In our case,
\[
\small
\mathcal{V} = \Set{
\frac16 
\left(
\begin{array}{rrr}
 1 & 0 & 0 \\
 2 & 3 & 0 \\
 1 & 0 & 2 \\
\end{array}
\right),
\frac16 
\left(
\begin{array}{rrr}
 2 & 1 & 0 \\
 1 & 2 & 0 \\
 0 & -1 & 2 \\
\end{array}
\right),
\frac16
\left(
\begin{array}{rrr}
 3 & 2 & 0 \\
 0 & 1 & 0 \\
 -1 & -2 & 2 \\
\end{array}
\right),
\frac16
\left(
\begin{array}{rrr}
 2 & 0 & 1 \\
 1 & 3 & -1 \\
 0 & 0 & 1 \\
\end{array}
\right),
\frac16 
\left(
\begin{array}{rrr}
 3 & 1 & 1 \\
 0 & 2 & -1 \\
 -1 & -1 & 1 \\
\end{array}
\right),
\frac16
\left(
\begin{array}{rrr}
 4 & 2 & 1 \\
 -1 & 1 & -1 \\
 -2 & -2 & 1 \\
\end{array}
\right)
}.
\]

\noindent Now, let us focus on the construction of $\mathcal{V}_1, \, \mathcal{V}_2$. Since $U_1, U_2$ are invariant subspaces under $\mathcal{V}$, we can directly compute the restrictions $\mathcal{T} |_{U_1}$ and $\mathcal{T} |_{U_2}$, respectively.
Thus, we can apply the same algorithm used to determine $\mathcal{V}$ and we get
\[
\mathcal{V}_1 = \Set{
\frac16 \left(
\begin{array}{rr}
 1 & 0 \\
 2 & 3 \\
\end{array}
\right),
\frac16
\left(
\begin{array}{rr}
 2 & 1 \\
 1 & 2 \\
\end{array}
\right),
\frac16
\left(
\begin{array}{rr}
 3 & 2 \\
 0 & 1 \\
\end{array}
\right)
}
\quad \text{and} \quad
\mathcal{V}_2 = \Set{
\frac16
\left(
\begin{array}{rr}
 1 & 0 \\
 1 & 2 \\
\end{array}
\right),
\frac16
\left(
\begin{array}{rr}
 2 & 1 \\
 0 & 1 \\
\end{array}
\right)
}.
\]
Notice that $\# \mathcal{V}_1, \, \# \mathcal{V}_2 < 6$ since $U_1 \cap U_2 \neq \emptyset$.

\end{example}

In Table \ref{table:continuity_holder}, we check the continuity and compute the H{\"o}lder regularity of $\mathcal{S}_{\mathbf{a}_{M,n}}$ with $M \in \set{ \mathrm{ diag } (2,3), \, \mathrm{ diag } (2,5)}$ and $n \in \set{1,2}$ following the procedure presented in Example \ref{ex:holder}.

\begin{table} 
\centering
\small
\begin{tabular}{cccccc}
\toprule
{Dilation matrix} & {Subdivision scheme} & $\rho (\mathcal{V})$ & $\rho (\mathcal{V}_1)$ & $\rho (\mathcal{V}_2)$ & $\alpha_{\phi}$ \\
\toprule
\multirow{2}*{$M={\displaystyle \begin{pmatrix} 2 & 0 \\ 0 & 3 \end{pmatrix}}$} & $\mathcal{S}_{\mathbf{a}_{M,1}}$ & 0.500000 & 0.500000 & 0.333333 & 1 \\
 & $\mathcal{S}_{\mathbf{a}_{M,2}}$ & 0.500003 & 0.500002 & 0.333335 & 1 \\
\midrule
\multirow{2}*{$M={\displaystyle \begin{pmatrix} 2 & 0 \\ 0 & 5 \end{pmatrix}}$} & $\mathcal{S}_{\mathbf{a}_{M,1}}$ & 0.500000 & 0.500000 & 0.200000 & 1\\
& $\mathcal{S}_{\mathbf{a}_{M,1}}$ & 0.500004 & 0.500003 & 0.200002 & 1 \\
\bottomrule
\end{tabular}
\caption{Continuity and H{\"o}lder regularity of $\mathcal{S}_{\mathbf{a}_{M,n}}$ (Theorem \ref{t:holder_regularity}).}
\label{table:continuity_holder}
\end{table}

%%%%%%%%%%%%%%%%%%%%%%%%%%%%%%%%%%%%%%%%%%%%%%%%%%%%%%%%%%%%%%%%%%%%%%%%
\section{Anisotropic approximating subdivision schemes}
\label{sec:anisotropic_pseudo}
%%%%%%%%%%%%%%%%%%%%%%%%%%%%%%%%%%%%%%%%%%%%%%%%%%%%%%%%%%%%%%%%%%%%%%%%

In this section, we consider the dilation matrix $M = \mathrm{ diag (2,3)}$. 
We first introduce a family of symmetric four directional box-splines, see Definition \ref{def:box_anisotropic}, then we define a new family of symmetric four directional approximating subdivision schemes, see Definition \ref{d:pseudo_anisotropic}. The aim of this section is to provide a family of approximating subdivision schemes as reference schemes for our multigrid examples. Especially, in Section \ref{sec:examples}, we show that for $M =  \mathrm{ diag (2,3)}$ the interpolatory subdivision schemes in Definition \ref{d:interp_anisotropic} are computationally superior to the approximating subdivision schemes that we define in this section.

\begin{definition}[Anisotropic Symmetric Four Directional Box-Splines] \label{def:box_anisotropic}
Let $n \in \NN$. The \emph{anisotropic symmetric four directional box-spline} $\mathcal{S}_{B_n}$ of order $n$ and dilation matrix $M= \mathrm{ diag (2,3)}$ is defined by its symbol
\[
B_n (z_1,z_2) = 6 \, \left ( \frac{(1+z_1)^2}{4 z_1} \frac{(1+z_2+z_2^2)^2}{9 z_2^2} \right )^{\lceil \frac{n}{2} \rceil} \left (  \frac{(2+z_2+z_1z_2+2z_1z_2^2)(2z_1+z_2+z_1z_2+2z_2^2)}{36z_1z_2^2} \right )^{\lfloor \frac{n}{2} \rfloor}, 
\quad (z_1,z_2) \in ( \CC \setminus \set{0} )^2.
\]
\end{definition}

\noindent In order to understand the definition of the symbols $B_n$, $n \in \NN$, in Definition \ref{def:box_anisotropic}, we need to look closely at the "basic" Laurent polynomials $B_1$ and $B_2$.
For $n=1$, the symbol $B_1$  in Definition \ref{def:box_anisotropic} becomes
\[
B_1 (z_1,z_2) = \frac{(1+z_1)^2}{4 z_1} \frac{(1+z_2+z_2^2)^2}{9 z_2^2}, \qquad (z_1,z_2) \in ( \CC \setminus \set{0} )^2.
\]
The factors $(1+z_1)^2 / (4 z_1)$ and $(1+z_2+z_2^2)^2/(9 z_2^2)$ are called first and second direction and they are the symbols of the univariate binary and ternary linear B-splines, respectively. Thus, the subdivision scheme $\mathcal{S}_{B_1}$ generates polynomials up to degree 1 (by a tensor product argument) and its mask $\mathbf{B}_1$ is symmetric and minimally supported,
\[
\arraycolsep=3pt\def\arraystretch{1.4}
\mathbf{B}_{1} = \left(
\begin{array}{ccccc}
 \frac{1}{6} & \frac{1}{3} & \frac{1}{2} & \frac{1}{3} & \frac{1}{6} \\
 \frac{1}{3} & \frac{2}{3} & 1 & \frac{2}{3} & \frac{1}{3} \\
 \frac{1}{6} & \frac{1}{3} & \frac{1}{2} & \frac{1}{3} & \frac{1}{6} \\
\end{array}
\right).
\]
For $n=2$, the symbol $B_2$ in Definition \ref{def:box_anisotropic} becomes
\[
B_2 (z_1,z_2) = \frac{(1+z_1)^2}{4 z_1} \frac{(1+z_2+z_2^2)^2}{9 z_2^2} \frac{(2+z_2+z_1z_2+2z_1z_2^2)(2z_1+z_2+z_1z_2+2z_2^2)}{36z_1z_2^2} , \qquad (z_1,z_2) \in ( \CC \setminus \set{0} )^2.
\]
The factor $(2+z_2+z_1z_2+2z_1z_2^2)(2z_1+z_2+z_1z_2+2z_2^2)/(36 z_1 z_2^2)$ represents the product of the so-called third and fourth directions. We computed such a symbol $B_2$ in order to guarantee that the subdivision scheme $\mathcal{S}_{B_2}$ generates polynomials up to degree 3 (Proposition \ref{p:box_generation}) and its mask $\mathbf{B}_2$ is symmetric and minimally supported,
\[
\arraycolsep=3pt\def\arraystretch{1.4}
\mathbf{B}_{2} = \left(
\begin{array}{ccccccccc}
 0 & \frac{1}{108} & \frac{1}{24} & \frac{1}{12} & \frac{23}{216} & \frac{1}{12} & \frac{1}{24} & \frac{1}{108} & 0 \\
 \frac{1}{54} & \frac{2}{27} & \frac{5}{27} & \frac{8}{27} & \frac{19}{54} & \frac{8}{27} & \frac{5}{27} & \frac{2}{27} & \frac{1}{54} \\
 \frac{1}{27} & \frac{7}{54} & \frac{31}{108} & \frac{23}{54} & \frac{53}{108} & \frac{23}{54} & \frac{31}{108} & \frac{7}{54} & \frac{1}{27} \\
 \frac{1}{54} & \frac{2}{27} & \frac{5}{27} & \frac{8}{27} & \frac{19}{54} & \frac{8}{27} & \frac{5}{27} & \frac{2}{27} & \frac{1}{54} \\
 0 & \frac{1}{108} & \frac{1}{24} & \frac{1}{12} & \frac{23}{216} & \frac{1}{12} & \frac{1}{24} & \frac{1}{108} & 0 \\
\end{array}
\right).
\]
Finally, the definition of such a symbol $B_n$, $n \in \NN$, in Definition \ref{def:box_anisotropic}, guarantees that the subdivision scheme $\mathcal{S}_{B_n}$ generates polynomials up to degree $2n-1$ (Proposition \ref{p:box_generation}) and its mask $\mathbf{B}_n$ is symmetric.

In order to prove Propositions \ref{p:box_generation} and \ref{p:box_reproduction}, we need an auxiliary Lemma.

\begin{lemma}
Let $n \in \NN$. The Laurent polynomial $B_n$ in Definition \ref{def:box_anisotropic} satisfies 
\begin{equation} \label{eq:box_anisotropi_new}
\begin{split}
&B_n (z_1,z_2) = 6 \, \sum_{j=0}^{ \lfloor \frac{n}{2} \rfloor } \binom{ \lfloor \frac{n}{2} \rfloor }{j} (-1)^j \, b^{(1)}_{n,j} (z_1) \,  b^{(2)}_{n,j} (z_2), \\
&b^{(1)}_{n,j} (z_1) = \left ( \frac{(1+z_1)^2}{4 z_1} \right )^{n-j} \left ( \frac{(1-z_1)^2}{4 z_1} \right )^j , \\
&b^{(2)}_{n,j} (z_2) = \left ( \frac{(1+z_2+z_2^2)^2}{9 z_2^2} \right )^{n-j}  \left ( \frac{ (1-z_2^2)^2}{9 z_2^2} \right )^{j}, \qquad (z_1,z_2) \in (\CC \setminus \set{0})^2.
\end{split}
\end{equation}
\end{lemma}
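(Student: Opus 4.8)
The plan is to reduce the claimed expansion to one elementary polynomial factorization and then invoke the binomial theorem. Throughout, write $p=\lceil \tfrac{n}{2}\rceil$, $q=\lfloor \tfrac{n}{2}\rfloor$, so that $p+q=n$, and introduce the shorthand
\[
u_+ = \frac{(1+z_1)^2}{4z_1}, \quad u_- = \frac{(1-z_1)^2}{4z_1}, \quad v_+ = \frac{(1+z_2+z_2^2)^2}{9z_2^2}, \quad v_- = \frac{(1-z_2^2)^2}{9z_2^2}.
\]
With this notation $b^{(1)}_{n,j}(z_1)=u_+^{\,n-j}u_-^{\,j}$ and $b^{(2)}_{n,j}(z_2)=v_+^{\,n-j}v_-^{\,j}$, hence $b^{(1)}_{n,j}(z_1)\,b^{(2)}_{n,j}(z_2)=(u_+v_+)^{n-j}(u_-v_-)^{j}$; moreover, by Definition \ref{def:box_anisotropic}, $B_n = 6\,(u_+v_+)^{p}\,X^{q}$, where
\[
X = \frac{(2+z_2+z_1z_2+2z_1z_2^2)(2z_1+z_2+z_1z_2+2z_2^2)}{36z_1z_2^2}
\]
is the product of the third and fourth directions.

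The crux is the single identity $X = u_+v_+-u_-v_-$. Since
\[
u_+v_+-u_-v_- = \frac{1}{36z_1z_2^2}\Big[(1+z_1)^2(1+z_2+z_2^2)^2-(1-z_1)^2(1-z_2^2)^2\Big],
\]
it suffices to show that the bracketed polynomial factors as $(2+z_2+z_1z_2+2z_1z_2^2)(2z_1+z_2+z_1z_2+2z_2^2)$. To this end I would set $P=(1+z_1)(1+z_2+z_2^2)$ and $Q=(1-z_1)(1-z_2^2)$, so that the bracket is $P^2-Q^2=(P+Q)(P-Q)$, and a direct expansion gives $P+Q = 2+z_2+z_1z_2+2z_1z_2^2$ and $P-Q = 2z_1+z_2+z_1z_2+2z_2^2$, which is precisely the desired factorization. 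This finite computation is the only real work in the proof; the difference-of-squares observation is the one idea needed, and everything else is routine.

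With $X = u_+v_+-u_-v_-$ established, the conclusion follows by the binomial theorem:
\[
B_n = 6\,(u_+v_+)^{p}\,(u_+v_+-u_-v_-)^{q} = 6\sum_{j=0}^{q}\binom{q}{j}(-1)^{j}(u_+v_+)^{p+q-j}(u_-v_-)^{j} = 6\sum_{j=0}^{q}\binom{q}{j}(-1)^{j}\,b^{(1)}_{n,j}(z_1)\,b^{(2)}_{n,j}(z_2),
\]
using $p+q=n$ together with $(u_+v_+)^{n-j}(u_-v_-)^{j}=b^{(1)}_{n,j}(z_1)\,b^{(2)}_{n,j}(z_2)$. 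Since $q=\lfloor\tfrac{n}{2}\rfloor$, this is exactly the identity \eqref{eq:box_anisotropi_new}, completing the proof. The expected obstacle is purely bookkeeping in the polynomial expansion of $P^2-Q^2$; no structural difficulty arises.
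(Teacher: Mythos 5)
Your proof is correct and follows essentially the same route as the paper: both hinge on the identity expressing the product of the third and fourth directions as $u_+v_+ - u_-v_-$, followed by an application of the binomial theorem with $\lceil n/2\rceil + \lfloor n/2\rfloor = n$. The only difference is cosmetic: the paper simply asserts that identity, while you verify it cleanly via the difference-of-squares factorization $(P+Q)(P-Q)$ with $P=(1+z_1)(1+z_2+z_2^2)$ and $Q=(1-z_1)(1-z_2^2)$.
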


\begin{proof}
We observe that the product of the third and the fourth directions can be written as
\[
 \frac{(2+z_2+z_1z_2+2z_1z_2^2)(2z_1+z_2+z_1z_2+2z_2^2)}{36z_1z_2^2} = \frac{(1+z_1)^2}{4 z_1} \frac{(1+z_2+z_2^2)^2}{9 z_2^2} - \frac{(1-z_1)^2 (1-z_2^2)^2}{36 z_1 z_2^2}.
\]
Using this identity, for $(z_1,z_2) \in (\CC \setminus \set{0})^2$, we can rewrite the Laurent polynomial $B_n$ in Definition \ref{def:box_anisotropic} as
\[
\begin{split}
B_n (z_1,z_2) &= 6 \, \left ( \frac{(1+z_1)^2}{4 z_1} \frac{(1+z_2+z_2^2)^2}{9 z_2^2} \right )^{\lceil \frac{n}{2} \rceil} 
\left ( \frac{(1+z_1)^2}{4 z_1} \frac{(1+z_2+z_2^2)^2}{9 z_2^2} - \frac{(1-z_1)^2 (1-z_2^2)^2}{36 z_1 z_2^2}  \right )^{\lfloor \frac{n}{2} \rfloor}, \\
&= 6 \, \left ( \frac{(1+z_1)^2}{4 z_1} \frac{(1+z_2+z_2^2)^2}{9 z_2^2} \right )^{\lceil \frac{n}{2} \rceil} 
\sum_{j=0}^{\lfloor \frac{n}{2} \rfloor} \binom{\lfloor \frac{n}{2} \rfloor}{j} (-1)^j 
\left ( \frac{(1+z_1)^2}{4 z_1} \frac{(1+z_2+z_2^2)^2}{9 z_2^2} \right )^{\lfloor \frac{n}{2} \rfloor-j}
\left ( \frac{(1-z_1)^2 (1-z_2^2)^2}{36 z_1 z_2^2} \right )^j, \\
&=6 \,  \sum_{j=0}^{\lfloor \frac{n}{2} \rfloor} \binom{\lfloor \frac{n}{2} \rfloor}{j} (-1)^j 
\left ( \frac{(1+z_1)^2}{4 z_1} \frac{(1+z_2+z_2^2)^2}{9 z_2^2} \right )^{n-j}
\left ( \frac{(1-z_1)^2 (1-z_2^2)^2}{36 z_1 z_2^2} \right )^j, \\
&= 6 \,  \sum_{j=0}^{\lfloor \frac{n}{2} \rfloor} \binom{\lfloor \frac{n}{2} \rfloor}{j} (-1)^j 
\left ( \frac{(1+z_1)^2}{4 z_1} \right )^{n-j} \left ( \frac{(1-z_1)^2}{4 z_1} \right )^j
\left ( \frac{(1+z_2+z_2^2)^2}{9 z_2^2} \right )^{n-j}  \left ( \frac{ (1-z_2^2)^2}{9 z_2^2} \right )^{j}, \\
&= 6 \, \sum_{j=0}^{ \lfloor \frac{n}{2} \rfloor } \binom{ \lfloor \frac{n}{2} \rfloor }{j} (-1)^j \, b^{(1)}_{n,j} (z_1) \,  b^{(2)}_{n,j} (z_2).
\end{split}
\]
\end{proof}

\begin{proposition} \label{p:box_generation}
Let $n \in \NN$. The anisotropic  symmetric four directional box-spline $\mathcal{S}_{B_n}$ of order $n$ associated with the symbol $B_n$ in Definition \ref{def:box_anisotropic} generates polynomials up to degree $2n-1$.
\end{proposition}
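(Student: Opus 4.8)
The plan is to apply Theorem~\ref{t:poly_generation}: convergence of the box-spline scheme $\mathcal{S}_{B_n}$ being standard, it suffices to show that $B_n$ satisfies the zero conditions of order $2n$, that is, $D^{\bm{\mu}} B_n(\bm{\varepsilon}) = 0$ for all $\bm{\varepsilon} \in E_M \setminus \set{\mathbf{1}}$ and all $\bm{\mu} \in \NN_0^2$ with $\abs{\bm{\mu}} \le 2n-1$. With $M = \mathrm{diag}(2,3)$ one computes directly from \eqref{eq:Gamma} that
\[
E_M = \set{1,-1} \times \set{1,\zeta,\zeta^2}, \qquad \zeta = e^{-2\pi \mathrm{i}/3},
\]
so that $E_M \setminus \set{\mathbf{1}}$ is the set of five points $(-1,1)$, $(1,\zeta^{\pm1})$, $(-1,\zeta^{\pm1})$, and $\zeta,\zeta^2$ are precisely the two roots of $1+z+z^2$.

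The main step is to feed the decomposition from the Lemma,
\[
B_n(z_1,z_2) = 6 \sum_{j=0}^{\lfloor n/2 \rfloor} \binom{\lfloor n/2 \rfloor}{j}(-1)^j\, b^{(1)}_{n,j}(z_1)\, b^{(2)}_{n,j}(z_2),
\]
into these derivative conditions. Each summand is a tensor product, so $D^{\bm{\mu}}\bigl(b^{(1)}_{n,j}(z_1) b^{(2)}_{n,j}(z_2)\bigr)(\varepsilon_1,\varepsilon_2) = D^{\mu_1} b^{(1)}_{n,j}(\varepsilon_1)\, D^{\mu_2} b^{(2)}_{n,j}(\varepsilon_2)$, and it is enough to track the vanishing orders of the univariate factors. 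Since $z_1,z_2 \neq 0$, these are read off from the numerators $(1+z_1)^{2(n-j)}(1-z_1)^{2j}$ and $(1+z_2+z_2^2)^{2(n-j)}(1-z_2)^{2j}(1+z_2)^{2j}$: the factor $b^{(1)}_{n,j}$ vanishes to order $2j$ at $z_1=1$ and to order $2(n-j)$ at $z_1=-1$, while $b^{(2)}_{n,j}$ vanishes to order $2j$ at $z_2=1$ and to order $2(n-j)$ at each of $z_2=\zeta,\zeta^2$ (here one uses $\zeta \neq \pm 1$).

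It remains to combine these orders at each of the five points. If at $(\varepsilon_1,\varepsilon_2)$ the $j$-th summand vanishes to order $\ge r_1$ in $z_1$ and $\ge r_2$ in $z_2$, then $D^{\bm{\mu}}$ of that summand vanishes whenever $\abs{\bm{\mu}} < r_1 + r_2$, since we cannot have both $\mu_1 \ge r_1$ and $\mu_2 \ge r_2$; hence it suffices that $r_1 + r_2 \ge 2n$ for every $j$ in the summation range. At $(-1,1)$ one gets $r_1 + r_2 = 2(n-j) + 2j = 2n$, and symmetrically $r_1 + r_2 = 2j + 2(n-j) = 2n$ at $(1,\zeta^{\pm1})$. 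At $(-1,\zeta^{\pm1})$ one gets $r_1 + r_2 = 4(n-j)$, and since $0 \le j \le \lfloor n/2\rfloor$ forces $n-j \ge \lceil n/2\rceil$, this yields $r_1+r_2 \ge 4\lceil n/2\rceil \ge 2n$. Thus every summand, and therefore $B_n$, satisfies the zero conditions of order $2n$, and the conclusion follows from Theorem~\ref{t:poly_generation}.

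The argument is mostly routine bookkeeping; the one place that needs care is the point $(-1,\zeta^{\pm1})$, where one must exploit the bound $j \le \lfloor n/2\rfloor$ on the summation index (equivalently, the split in Definition~\ref{def:box_anisotropic} into $\lceil n/2\rceil$ "tensor-product" factors and $\lfloor n/2\rfloor$ "third/fourth direction" factors) — for the remaining points this restriction is not needed.
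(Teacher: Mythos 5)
Your proof is correct, but it takes a genuinely different route from the paper. The paper argues by induction on $n$: the even-to-odd step is handled by the ideal-theoretic product rule ($B_{n+1}=B_n\cdot\frac16 B_1$ with $B_n\in\mathcal{I}_{2n-1}$, $B_1\in\mathcal{I}_1$), while the odd-to-even step requires a separate Leibniz-rule analysis because the extra factor $Q$ does not vanish on $E_M\setminus\{\mathbf{1}\}$; inside that analysis the paper also falls back on the decomposition \eqref{eq:box_anisotropi_new} and a case split on $\alpha_1\le n$ versus $\alpha_1>n$. You instead verify the zero conditions of order $2n$ directly for every $n$ at once: since each summand of \eqref{eq:box_anisotropi_new} is a tensor product, you only need the univariate vanishing orders of $b^{(1)}_{n,j}$ and $b^{(2)}_{n,j}$ at $-1$, $1$ and the primitive cube roots of unity, together with the elementary observation that a product of univariate factors vanishing to orders $r_1$ and $r_2$ has all mixed derivatives of total order $<r_1+r_2$ equal to zero. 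Your bookkeeping at the five points of $E_M\setminus\{\mathbf{1}\}$ is accurate (in particular the only place where the constraint $j\le\lfloor n/2\rfloor$ is needed is $(-1,\zeta^{\pm1})$, exactly as you flag), and the reliance on convergence as a standing hypothesis of Theorem \ref{t:poly_generation} matches the paper, which likewise defers convergence to a separate remark. What your approach buys is uniformity — no induction, no even/odd dichotomy, and no need for the recursive factorization of $B_{n+1}$ — at the cost of invoking the full strength of the auxiliary Lemma for all $n$ rather than only in the odd step.
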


\begin{proof}
We proceed by induction. 

\noindent \emph{Step 1.} For $n=1$, the base case is trivial due to a tensor product argument.

\noindent \emph{Step 2.} Let us suppose that for any $n \geq 1$, $\mathcal{S}_{B_n}$ generates polynomials up to degree $2n-1$. We want to show that $\mathcal{S}_{B_{n+1}}$ generates polynomials up to degree $2(n+1)-1=2n+1$.
The symbol of the anisotropic symmetric four directional box-spline $\mathcal{S}_{B_{n+1}}$ satisfies the recursive formula
\[
B_{n+1}(z_1,z_2) = 
\begin{cases}
\bigskip
 B_n (z_1,z_2) \cdot \frac16 B_1 (z_1,z_2), & n \text{ even}, \\
 B_n (z_1,z_2) \cdot Q(z_1,z_2), & n \text{ odd},
\end{cases}
\]
where
\[
Q(z_1,z_2) = \frac{ \bigl ( 2+z_2+z_1z_2+2z_1z_2^2 \bigr ) \bigl ( 2z_1+z_2+z_1z_2+2z_2^2 \bigr ) }{36z_1z_2^2}.
\]

\noindent For $n$ even, $\mathcal{S}_{B_n} \in \mathcal{I}_{2n-1}$ by induction and $\mathcal{S}_{B_1} \in \mathcal{I}_{1}$ by \emph{Step 1}, thus, $\mathcal{S}_{B_{n+1}} \in \mathcal{I}_{2n+1}$.

\noindent For $n$ odd, we cannot apply the same argument as before since $Q(z_1,z_2)$ does not vanish on $E_M \setminus \set{(1,1)}$. 
Let $\bm{\alpha} \in \NN_0^2$, $\abs{\bm{\alpha}} \leq 2n+1$. Applying the Leibniz formula to $B_{n+1}$ we get
\begin{equation} \label{eq:aux}
D^{(\alpha_1,\alpha_2)} B_{n+1}(z_1,z_2) = \sum_{\beta_1=0}^{\alpha_1} \, \sum_{\beta_2=0}^{\alpha_2} \binom{\alpha_1}{\beta_1} \, \binom{\alpha_2}{\beta_2} \,
D^{(\beta_1,\beta_2)} B_{n}(z_1,z_2) \cdot D^{(\alpha_1-\beta_1, \alpha_2-\beta_2)} Q(z_1,z_2).
\end{equation}
The following analysis is split in 2 steps: $\abs{\bm{\alpha}} = 2n$ and $\abs{\bm{\alpha}} = 2n+1$.

\emph{(i)} Let $\abs{\bm{\alpha}} \in \NN_0^2$, $\abs{\bm{\alpha}} = 2n$. From \eqref{eq:aux} and \emph{(i)}, we get
\[
D^{(\alpha_1,\alpha_2)} B_{n+1}(z_1,z_2) = D^{(\alpha_1,\alpha_2)} B_{n}(z_1,z_2) \cdot Q(z_1,z_2).
\]
By straightforward computation, we have
\[
Q(\bm{\varepsilon}) =0, \qquad \bm{\varepsilon} \in \Set{ \bigl(1, e^{2/3 \pi \mathrm{i}} \bigr ), \,  \bigl(1, e^{4/3 \pi \mathrm{i}} \bigr ), \, (-1,1)},
\]
thus 
\[
D^{(\alpha_1,\alpha_2)} B_{n+1}(\bm{\varepsilon}) =0, \qquad \bm{\varepsilon} \in \Set{ \bigl(1, e^{2/3 \pi \mathrm{i}} \bigr ), \,  \bigl(1, e^{4/3 \pi \mathrm{i}} \bigr ), \, (-1,1)}.
\]
Now we need to study the behavior of $D^{(\alpha_1,\alpha_2)} B_{n+1} (\bm{\varepsilon})$, that is the behavior of $D^{(\alpha_1,\alpha_2)} B_{n} (\bm{\varepsilon})$, for
\[
 \bm{\varepsilon} \in E_M \setminus \Set{ (1,1), \, \bigl(1, e^{2/3 \pi \mathrm{i}} \bigr ), \,  \bigl(1, e^{4/3 \pi \mathrm{i}} \bigr ), \, (-1,1)} 
= \Set{\bigl(-1, e^{2/3 \pi \mathrm{i}} \bigr ), \,  \bigl(-1, e^{4/3 \pi \mathrm{i}} \bigr )}.
\]
W.l.o.g., we focus our attention on $\bm{\varepsilon} = \bigl(-1, e^{2/3 \pi \mathrm{i}} \bigr )$.
From \eqref{eq:box_anisotropi_new}, we get
\begin{equation} \label{eq:Bn_derivative_z1z2separate}
D^{(\alpha_1,\alpha_2)} B_n (z_1,z_2) = 6 \, \sum_{j=0}^{ \frac{n-1}{2} } \binom{\frac{n-1}{2}}{j} (-1)^j 
\frac{d^{\alpha_1}}{d z_1^{\alpha_1}} b^{(1)}_{n,j} (z_1) \, \frac{d^{\alpha_2}}{d z_2^{\alpha_2}} b^{(2)}_{n,j} (z_2),
\end{equation}
thus, in order to compute $D^{(\alpha_1,\alpha_2)} B_n (\bm{\varepsilon})$, we need to study separately the behavior of
\[
\begin{split}
&\frac{d^{\alpha_1}}{d z_1^{\alpha_1}} b^{(1)}_{n,j} (z_1) \Bigg |_{z_1=-1} = \sum_{\beta_1=0}^{\alpha_1} \binom{\alpha_1}{\beta_1} 
\frac{d^{\beta_1}}{d z_1^{\beta_1}} (1+z_1)^{2(n-j)} \Bigg |_{z_1=-1} \cdot \frac{d^{\alpha_1-\beta_1}}{d z_1^{\alpha_1-\beta_1}} (4^{-n} z_1^{-n} (1-z_1)^{2j} )  \Bigg |_{z_1=-1}, \\
&\frac{d^{\alpha_2}}{d z_2^{\alpha_2}} b^{(2)}_{n,j} (z_2) \Bigg |_{z_2=e^{2/3 \pi \mathrm{i}}} = \sum_{\beta_2=0}^{\alpha_2} \binom{\alpha_2}{\beta_2} 
\frac{d^{\beta_2}}{d z_2^{\beta_2}} (1+z_2+z_2^2)^{2(n-j)} \Bigg |_{z_2=e^{2/3 \pi \mathrm{i}}} \cdot \frac{d^{\alpha_2-\beta_2}}{d z_2^{\alpha_2-\beta_2}} (9^{-n} z_2^{-2n} (1-z_2^2)^{2j}) \Bigg |_{z_2=e^{2/3 \pi \mathrm{i}}},
\end{split}
\] 
for $j = 0, \ldots, \frac{n-1}{2}$.
Notice that for any $j \in \set{0, \ldots, \frac{n-1}{2}}$, we have $2(n-j) \in \set{n+1, \ldots ,2n}$. 

\noindent Let $\alpha_1 \in \set{0, \ldots, n}$. For any $\beta_1 \in \set{0, \ldots, \alpha_1}$ and for any $j \in \set{0, \ldots, \frac{n-1}{2}}$, we have $\beta_1 \leq \alpha_1 \leq n < 2(n-j)$. Thus,
\[
\frac{d^{\beta_1}}{d z_1^{\beta_1}} (1+z_1)^{2(n-j)} \Bigg |_{z_1=-1} = 0,
\]
and we get $D^{(\alpha_1,\alpha_2)} B_n (\bm{\varepsilon})=0$.

\noindent Let $\alpha_1 \in \set{n+1, \ldots, 2n}$. Then $\alpha_2 = 2n-\alpha_1 \in \set{0, \ldots,n-1}$. Using the same argument as before, for any $\beta_2 \in \set{0, \ldots, \alpha_2}$ and for any $j \in \set{0, \ldots, \frac{n-1}{2}}$, we get
\[
\frac{d^{\beta_2}}{d z_2^{\beta_2}} (1+z_2+z_2^2)^{2(n-j)} \Bigg |_{z_2=e^{2/3 \pi \mathrm{i}}} = 0.
\]
Thus, from \eqref{eq:Bn_derivative_z1z2separate}, we get $D^{(\alpha_1,\alpha_2)} B_n (\bm{\varepsilon})=0$.

\emph{(ii)} Let $\abs{\bm{\alpha}} \in \NN_0^2$, $\abs{\bm{\alpha}} = 2n+1$. From \eqref{eq:aux}, \emph{(i)} and \emph{(ii)}, we get
\[
D^{(\alpha_1,\alpha_2)} B_{n+1}(z_1,z_2) =  D^{(\alpha_1,\alpha_2)} B_{n}(z_1,z_2) \cdot Q(z_1,z_2).
\]
Thus, we need to study the behavior of $D^{(\alpha_1,\alpha_2)} B_{n+1} (\bm{\varepsilon})$, that is the behavior of $D^{(\alpha_1,\alpha_2)} B_{n} (\bm{\varepsilon})$, for
\[
 \bm{\varepsilon} \in E_M \setminus \Set{ (1,1), \, \bigl(1, e^{2/3 \pi \mathrm{i}} \bigr ), \,  \bigl(1, e^{4/3 \pi \mathrm{i}} \bigr ), \, (-1,1)} 
= \Set{\bigl(-1, e^{2/3 \pi \mathrm{i}} \bigr ), \,  \bigl(-1, e^{4/3 \pi \mathrm{i}} \bigr )}.
\]
We notice that 
\begin{itemize}
\item $\alpha_1 \in \set{0, \ldots, n}$: for any $\beta_1 \in \set{0, \ldots, \alpha_1}$ and for any $j \in \set{0, \ldots, \frac{n-1}{2}}$, we have $\beta_1 \leq \alpha_1 \leq n < 2(n-j)$,
\item $\alpha_1 \in \set{n+1, \ldots, 2n+1}$: $\alpha_2 = 2n+1-\alpha_1 \in \set{0, \ldots,n}$, thus for any $\beta_2 \in \set{0, \ldots, \alpha_2}$ and for any $j \in \set{0, \ldots, \frac{n-1}{2}}$, we have $\beta_2 \leq \alpha_2 \leq n < 2(n-j)$.
\end{itemize}
Thesis follows from the same argument of \emph{(ii)}.
\end{proof}

\begin{proposition} \label{p:box_reproduction}
Let $n \in \NN$. The anisotropic symmetric four directional box-spline $\mathcal{S}_{B_n}$ of order $n$ associated with the symbol $B_n$ in Definition \ref{def:box_anisotropic} reproduces polynomials up to degree 1.
\end{proposition}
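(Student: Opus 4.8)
The plan is to verify the symbol conditions of Theorem~\ref{t:poly_reproduction} for $n=1$. Write $B_n = 6\, f_1^{\lceil n/2\rceil} f_2^{\lfloor n/2\rfloor}$ with $f_1(z_1,z_2) = \tfrac{(1+z_1)^2}{4 z_1}\,\tfrac{(1+z_2+z_2^2)^2}{9 z_2^2}$ and $f_2(z_1,z_2) = \tfrac{(2+z_2+z_1z_2+2z_1z_2^2)(2z_1+z_2+z_1z_2+2z_2^2)}{36 z_1 z_2^2}$, the two brackets in Definition~\ref{def:box_anisotropic}. The mask $\mathbf{B}_n$ is symmetric: $f_1$ is visibly invariant under $z_1\mapsto z_1^{-1}$ and under $z_2\mapsto z_2^{-1}$, and by the one-line identity $f_2 = f_1 - r$ with $r(z_1,z_2) = \tfrac{(1-z_1)^2(1-z_2^2)^2}{36 z_1 z_2^2}$ recorded in the proof of the auxiliary Lemma, so is $f_2$. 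Hence the optimal shift in Theorem~\ref{t:poly_reproduction} is $\bm{\tau}=\bm{0}$, and reproduction of $\Pi_1$ reduces to: (a) $B_n(\mathbf{1}) = \abs{\det M} = 6$; (b) $\partial_{z_1} B_n(\mathbf{1}) = \partial_{z_2} B_n(\mathbf{1}) = 0$; (c) $D^{\bm{\mu}} B_n(\bm{\varepsilon}) = 0$ for all $\bm{\varepsilon}\in E_M\setminus\set{\mathbf{1}}$ and all $\bm{\mu}$ with $\abs{\bm{\mu}}\le 1$.

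Condition (c) comes for free from Proposition~\ref{p:box_generation}: $\mathcal{S}_{B_n}$ generates polynomials up to degree $2n-1\ge 1$, so by Theorem~\ref{t:poly_generation} one has $D^{\bm{\mu}} B_n(\bm{\varepsilon}) = 0$ on $E_M\setminus\set{\mathbf{1}}$ for all $\abs{\bm{\mu}}\le 2n-1$, in particular for $\abs{\bm{\mu}}\le 1$. For (a), evaluating the three rational factors of $B_n$ at $(1,1)$ gives $\tfrac{4}{4}$, $\tfrac{9}{9}$ and $\tfrac{6\cdot 6}{36}$, each equal to $1$, so $B_n(\mathbf{1}) = 6$.

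For (b) I would differentiate $B_n = 6\, f_1^{\lceil n/2\rceil} f_2^{\lfloor n/2\rfloor}$: since $f_1(\mathbf{1}) = f_2(\mathbf{1}) = 1$, the product rule gives $\partial_{z_i} B_n(\mathbf{1}) = 6\bigl(\lceil n/2\rceil\,\partial_{z_i} f_1(\mathbf{1}) + \lfloor n/2\rfloor\,\partial_{z_i} f_2(\mathbf{1})\bigr)$, so it suffices that the four first partials $\partial_{z_i} f_1(\mathbf{1})$, $\partial_{z_i} f_2(\mathbf{1})$ vanish. For $f_1$ this is clear: each of its univariate factors $\tfrac{(1+z_1)^2}{4z_1}$ and $\tfrac{(1+z_2+z_2^2)^2}{9z_2^2}$ is invariant under $z_i\mapsto z_i^{-1}$ and hence has vanishing derivative at the fixed point $z_i=1$. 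For $f_2 = f_1 - r$, the factors $(1-z_1)^2$ and $(1-z_2^2)^2$ of $r$ make $\partial_{z_1} r$ carry a factor $(1-z_1)$ and $\partial_{z_2} r$ a factor $(1-z_2^2)$, so $\partial_{z_i} r(\mathbf{1}) = 0$, whence $\partial_{z_i} f_2(\mathbf{1}) = \partial_{z_i} f_1(\mathbf{1}) - \partial_{z_i} r(\mathbf{1}) = 0$. This gives (b) and completes the proof.

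I do not anticipate a genuine obstacle; the argument is a short verification. Its only content-bearing ingredient, the vanishing of $B_n$ and its first derivatives on $E_M\setminus\set{\mathbf{1}}$, is already supplied by Proposition~\ref{p:box_generation}, while the symmetry of $\mathbf{B}_n$ forces $\bm{\tau}=\bm{0}$ and reduces the conditions at $\mathbf{1}$ to the elementary fact that $B_n - 6$ vanishes to second order at $(1,1)$. The only items needing a word of care are the bookkeeping with the $\lceil\cdot\rceil/\lfloor\cdot\rfloor$ exponents — harmless, since $f_1 = f_2 = 1$ at $(1,1)$ and $\lceil n/2\rceil\ge 1$ — and, for full rigour, recording that $\mathcal{S}_{B_n}$, being a box-spline subdivision scheme, is convergent, so that Theorems~\ref{t:poly_generation} and~\ref{t:poly_reproduction} apply.
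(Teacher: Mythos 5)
Your verification of the three conditions you list is correct, and for the conditions at $\mathbf{1}$ you take a mildly different computational route from the paper: you differentiate the product form $B_n = 6\,f_1^{\lceil n/2\rceil} f_2^{\lfloor n/2\rfloor}$ and kill the first partials of $f_1$ and $f_2$ by the $z\mapsto z^{-1}$ invariance of each univariate factor together with the factorization of $r$, whereas the paper works from the expansion \eqref{eq:box_anisotropi_new} of the auxiliary Lemma, observes that $b^{(2)}_{n,j}(1)=0$ for $j\geq 1$ so that only the $j=0$ term survives, and then differentiates $\bigl(\tfrac{(1+z_1)^2}{4z_1}\bigr)^n$ explicitly. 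Both are sound; yours is slightly slicker for first derivatives, the paper's sets up the computation it reuses immediately afterwards.

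The one substantive discrepancy is that the paper's proof contains a third item you omit: it shows $D^{(2,0)}B_n(1,1) = 3n \neq 0$, i.e.\ that some second-order derivative of $B_n$ at $\mathbf{1}$ does \emph{not} vanish. Your argument establishes that $\mathcal{S}_{B_n}$ reproduces $\Pi_1$, which is all that Definition \ref{d:poly_reproduction} literally requires of the statement, but the paper evidently reads ``up to degree $1$'' as asserting the exact reproduction degree --- this sharpness is the whole point of the section, since the box-splines generate $\Pi_{2n-1}$ while reproducing only $\Pi_1$, in contrast to the interpolatory schemes. If you intend the sharp version, you need to add this non-vanishing check; with your product-form approach it is a second application of the product rule at $\mathbf{1}$, where the cross terms drop because all first partials vanish and one is left with $6\bigl(\lceil n/2\rceil\,\partial_{z_1}^2 f_1(\mathbf{1}) + \lfloor n/2\rfloor\,\partial_{z_1}^2 f_2(\mathbf{1})\bigr)$, which a short computation shows equals $3n$.
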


\begin{proof}
In order to prove Proposition \ref{p:box_reproduction}, by Theorem \ref{t:poly_reproduction} and Proposition \ref{p:box_generation}, we need to show that
\[
\begin{array}{cll}
(i)   & B_n(1,1) = \abs{\det{M}} = 6, & \\
(ii)  & D^{\bm{\alpha}} B_n (1,1) = 0, & \forall \bm{\alpha} \in \NN_0^2 \, : \abs{\bm{\alpha}} = 1, \\
(iii) & D^{\bm{\alpha}} B_n (1,1) \neq 0 & \text{for some } \bm{\alpha} \in \NN_0^2 \, : \abs{\bm{\alpha}} = 2.
\end{array}
\]

\noindent \emph{(i)} By Definition \ref{def:box_anisotropic}, $B_n(1,1) = 6$.

\noindent \emph{(ii)} Let $\bm{\alpha} = (1,0)$, $\abs{\bm{\alpha}}=1$. Using \eqref{eq:box_anisotropi_new} and noticing that $b^{(2)}_{n,j} (1) = 0$ for $j = 1 , \dots,  \lfloor n/2 \rfloor$, the $(1,0)$-th directional derivative of $B_n$ evaluated at $(1,1)$ becomes
\[
\begin{split}
D^{(1,0)} B_n (1,1) &=  6 \, \sum_{j=0}^{ \lfloor \frac{n}{2} \rfloor } \binom{ \lfloor \frac{n}{2} \rfloor }{j} (-1)^j \, \frac{d}{d z_1} b^{(1)}_{n,j} (z_1) \big |_{z_1 = 1} \,  b^{(2)}_{n,j} (1) \\
&=  6 \, \frac{d}{d z_1} \left ( \frac{(1+z_1)^2}{4 z_1} \right )^n \Bigg |_{z_1=1} \\
&= 6 n \, \left ( \frac{(1+z_1)^2}{4 z_1} \right )^{n-1} \Bigg |_{z_1=1} \underbrace{ \left ( \frac{1+z_1}{2 z_1} - \frac{(1+z_1)^2}{4 z_1^2} \right ) \Bigg |_{z_1=1}}_{{} = 0} = 0.
\end{split}
\]
Analogously for $\bm{\alpha} = (0,1)$, $\abs{\bm{\alpha}}=1$.

\noindent \emph{(iii)} Let $\bm{\alpha} = (2,0)$, $\abs{\bm{\alpha}}=2$. We show that $D^{(2,0)} B_n (1,1) \neq 0$. Using the previous argument, we get
\[
D^{(2,0)} B_n (1,1) =  6 \, \frac{d^2}{d z_1^2} \left ( \frac{(1+z_1)^2}{4 z_1} \right )^n \Bigg |_{z_1=1} = 3n.
\]
\end{proof}

\begin{remark}
Let $n \in \NN$. The convergence of the anisotropic symmetric four directional box-spline $\mathcal{S}_{B_n}$ in Definition \ref{def:box_anisotropic} follows by standard argument involving the smoothing factors.
\end{remark}

We are now ready to define the family of anisotropic symmetric four directional approximating schemes.

\begin{definition} \label{d:pseudo_anisotropic}
Let $n \in \NN$, $\ell \in \set{0, \ldots, n-1}$. The \emph{anisotropic symmetric four directional approximating scheme} of order $(n,\ell)$ and dilation matrix $M = \mathrm{diag } (2,3)$ is defined by its symbol
\[
B_{n,\ell} (z_1,z_2) = \sum_{i=0}^{\ell} B_{n-i} (z_1,z_2) \cdot \sum_{j=0}^{i} \mathrm{c}_n^{(i,j)} \delta_1 (z_1)^{i-j} \delta_2 (z_2)^{j}, \qquad (z_1,z_2) \in ( \CC \setminus \set{0} )^2,
\]
where
\[
\delta_1 (z_1) = - \frac{ \bigl ( 1-z_1^2 \bigr )^2}{16 z_1^2}, \qquad \delta_2 (z_2) = - \frac{ \bigl ( 1-z_2^3 \bigr )^2}{27 z_2^3},
\]
the coefficients $\mathrm{c}_n^{(i,j)}$ are computed recursively as the solution of the system of equations
\[
D^{(2(i-j),2j)} B_{n,\ell} (1,1) = 0, \qquad j=0, \ldots, i, \qquad i=0, \ldots, \ell-1.
\]
\end{definition}

\begin{proposition} \label{p:pseudo_generation}
Let $n \in \NN$, $\ell \in \set{0, \ldots, n-1}$. The anisotropic symmetric four directional approximating scheme $\mathcal{S}_{B_{n,\ell}}$ of order $(n,\ell)$ associated with the symbol $B_{n,\ell}$ in Definition \ref{d:pseudo_anisotropic} generates polynomials up to degree $2n-1$.
\end{proposition}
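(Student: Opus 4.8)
The plan is to reduce Proposition~\ref{p:pseudo_generation} to a single ideal membership and then verify it by a short vanishing‑order bookkeeping, in the style of the proof of Proposition~\ref{p:box_generation}. Recall the reformulation of Theorem~\ref{t:poly_generation} given after that theorem: a convergent subdivision scheme generates polynomials up to degree $2n-1$ if and only if its symbol lies in the ideal $\mathcal{I}_{2n-1}$. Convergence of $\mathcal{S}_{B_{n,\ell}}$ is not the substantive point: it follows by a standard smoothing‑factor argument, exactly as for the box splines $\mathcal{S}_{B_n}$, using that $B_{n-\ell}$ divides $B_{n,\ell}$ (indeed $B_{n-\ell}\mid B_{n-i}$ for every $i\le\ell$, by the recursion established in the proof of Proposition~\ref{p:box_generation}); see the remark preceding Definition~\ref{d:pseudo_anisotropic}. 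Hence the real task is to show $B_{n,\ell}\in\mathcal{I}_{2n-1}$.

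To do this I would expand
\[
B_{n,\ell}(z_1,z_2)=\sum_{i=0}^{\ell}\sum_{j=0}^{i}\mathrm{c}_n^{(i,j)}\,B_{n-i}(z_1,z_2)\,\delta_1(z_1)^{i-j}\,\delta_2(z_2)^{j},
\]
and, since $\mathcal{I}_{2n-1}$ is an ideal (closed under sums and scalar multiples), check that each product $B_{n-i}\,\delta_1^{i-j}\delta_2^{j}$ lies in $\mathcal{I}_{2n-1}$. By Proposition~\ref{p:box_generation}, $\mathcal{S}_{B_{n-i}}$ generates polynomials up to degree $2(n-i)-1$, so $B_{n-i}\in\mathcal{I}_{2(n-i)-1}$ (here $n-i\ge 1$ because $i\le\ell\le n-1$). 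For the second factor, from $\delta_1(z_1)=-\tfrac{(1-z_1^2)^2}{16 z_1^2}$ and $\delta_2(z_2)=-\tfrac{(1-z_2^3)^2}{27 z_2^3}$ one gets, up to a nonzero constant, $\delta_1^{i-j}\delta_2^{j}=z_1^{-2(i-j)}z_2^{-3j}\,(1-z_1^{2})^{2(i-j)}(1-z_2^{3})^{2j}$, and the factor $(1-z_1^{2})^{2(i-j)}(1-z_2^{3})^{2j}=(1-z_1^{m_1})^{2(i-j)}(1-z_2^{m_2})^{2j}$ (with $m_1=2$, $m_2=3$) is precisely one of the generators of $\mathcal{J}_{2i-1}$, since $2(i-j)+2j=2i=(2i-1)+1$. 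Multiplication by the unit Laurent monomial $z_1^{-2(i-j)}z_2^{-3j}$ does not affect the zero conditions defining the ideals, so $\delta_1^{i-j}\delta_2^{j}\in\mathcal{J}_{2i-1}\subseteq\mathcal{I}_{2i-1}$ for $i\ge 1$. The product rule $\mathcal{I}_a\cdot\mathcal{I}_b\subseteq\mathcal{I}_{a+b+1}$ recalled in the excerpt then gives $B_{n-i}\,\delta_1^{i-j}\delta_2^{j}\in\mathcal{I}_{(2(n-i)-1)+(2i-1)+1}=\mathcal{I}_{2n-1}$ for $i\ge1$, while the term $i=0$ is just $\mathrm{c}_n^{(0,0)}B_n\in\mathcal{I}_{2n-1}$ directly from Proposition~\ref{p:box_generation}. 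Summing over $i,j$ yields $B_{n,\ell}\in\mathcal{I}_{2n-1}$.

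An equivalent, self‑contained verification, mirroring the proof of Proposition~\ref{p:box_generation}, would fix $\bm{\varepsilon}\in E_M\setminus\{\mathbf{1}\}$ and $\bm{\mu}$ with $|\bm{\mu}|\le 2n-1$, and apply the Leibniz rule: a summand of $D^{\bm{\mu}}\!\left(B_{n-i}\,\delta_1^{i-j}\delta_2^{j}\right)(\bm{\varepsilon})$ is nonzero only when $D^{\bm{\nu}}B_{n-i}(\bm{\varepsilon})\neq 0$, which by Proposition~\ref{p:box_generation} forces $|\bm{\nu}|\ge 2(n-i)$, and when $D^{\bm{\mu}-\bm{\nu}}(\delta_1^{i-j}\delta_2^{j})(\bm{\varepsilon})\neq 0$, which forces $|\bm{\mu}-\bm{\nu}|\ge 2i$; together these give $|\bm{\mu}|\ge 2n$, a contradiction. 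I would also record in passing that $B_{n,\ell}(\mathbf{1})=\mathrm{c}_n^{(0,0)}B_n(\mathbf{1})=6=|\det M|$, since $\delta_1(1)=\delta_2(1)=0$ annihilates every term with $i\ge 1$ and $\mathrm{c}_n^{(0,0)}=1$ by construction.

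The only point that really needs care — and the reason $\delta_1,\delta_2$ are the right perturbations to use — is that $\delta_1$ must have a double zero at $z_1=1$ and $\delta_2$ at $z_2=1$, not merely at the remaining second and third roots of unity; this is exactly what the full factors $(1-z_1^2)$ and $(1-z_2^3)$ deliver, and it is what makes the vanishing‑order count go through at the points of $E_M$ having one coordinate equal to $1$, such as $(1,e^{2\pi\mathrm{i}/3})$. By contrast, the precise values of the coefficients $\mathrm{c}_n^{(i,j)}$ play no role here: they are pinned down by the derivative conditions at $(1,1)$ only in order to obtain the stronger polynomial reproduction property of $\mathcal{S}_{B_{n,\ell}}$, which is treated separately.
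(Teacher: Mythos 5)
Your proposal is correct and follows essentially the same route as the paper: decompose $B_{n,\ell}$ into the terms $B_{n-i}\,\delta_1^{i-j}\delta_2^{j}$, use Proposition \ref{p:box_generation} to place $B_{n-i}\in\mathcal{I}_{2(n-i)-1}$, observe that $\delta_1^{i-j}\delta_2^{j}\in\mathcal{J}_{2i-1}\subset\mathcal{I}_{2i-1}$, and conclude via the product rule $\mathcal{I}_a\cdot\mathcal{I}_b\subseteq\mathcal{I}_{a+b+1}$. Your treatment is in fact slightly more careful than the paper's (explicitly isolating the $i=0$ term, where $\mathcal{I}_{2i-1}$ would be $\mathcal{I}_{-1}$, and justifying the generator-matching for $\mathcal{J}_{2i-1}$); the extra Leibniz-rule verification and the remarks on convergence and normalization are sound but not needed.
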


\begin{proof}
By Proposition \ref{p:box_generation}, for $i=0, \ldots, \ell$, the symmetric four directional box-spline $\mathcal{S}_{B_{n-i}} \in \mathcal{I}_{2n-2i-1}$.
By definition, 
\[
\delta_1 (z_1)^{i-j} \delta_2 (z_2)^{j} = \Biggl ( - \frac{ \bigl ( 1-z_1^2 \bigr )^2}{16 z_1^2} \Biggr )^{i-j} \Biggl ( - \frac{ \bigl ( 1-z_2^3 \bigr )^2}{27 z_2^3} \Biggr )^j \in \mathcal{J}_{2i-1} \subset \mathcal{I}_{2i-1}.
\]
Since the ideal $\mathcal{I}_k$, $k \in \NN_0$, is closed under addition, we have
\[
B_{n-i} \cdot \underbrace{\sum_{j=0}^{i} \mathrm{c}_n^{(i,j)} \delta_1 (z_1)^{i-j} \delta_2 (z_2)^{j}}_{{} \in \mathcal{I}_{2i-1}} \in \mathcal{I}_{2n-1} \qquad \implies \qquad
B_{n,\ell} (z_1,z_2) = \sum_{i=0}^{\ell} B_{n-i} \cdot \sum_{j=0}^{i} \mathrm{c}_n^{(i,j)} \delta_1 (z_1)^{i-j} \delta_2 (z_2)^{j} \in \mathcal{I}_{2n-1},
\]
that is $\mathcal{S}_{B_{n,\ell}}$ generates polynomials up to degree $2n-1$.
\end{proof}

\begin{remark}
Let $n \in \NN$, $\ell \in \set{0, \ldots, n-1}$. The convergence analysis of the anisotropic symmetric four directional approximating scheme $\mathcal{S}_{B_{n,\ell}}$ in Definition \ref{d:pseudo_anisotropic} can be done similarly to subsection \ref{ssec:convergence}.
\end{remark}
Finally, we believe that for any $n \in \NN$, $ \ell \in \set{0, \ldots, n-1}$, the anisotropic symmetric four directional approximating scheme $\mathcal{S}_{B_{n,\ell}}$ in Definition \ref{d:pseudo_anisotropic} reproduces polynomials up to degree $2 \ell + 1$, and we actually verified it for $n \leq 10$. Notice that, if $\ell = n-1$, then $\mathcal{S}_{B_{n,n-1}}$ generates and reproduces polynomials up to the same degree $2n-1$. Contrary to the univariate case, in the bivariate case this property does not imply that the subdivision scheme $\mathcal{S}_{B_{n,n-1}}$ is interpolatory. Indeed, its mask $B_{n,n-1}$ does not satisfy the interpolatory condition \eqref{eq:mask_interp}. See Example \ref{ex:approx23}, masks  $B_{2,1}$ and  $B_{3,2}$.

%%%%%%%%%%%%%%%%%%%%%%%%%%%%%%%%%%%%%%%%%%%%%%%%%%%%%%%%%%%%%%%%%%%%%%%%
\section{Subdivision, multigrid and examples}
\label{sec:examples}
%%%%%%%%%%%%%%%%%%%%%%%%%%%%%%%%%%%%%%%%%%%%%%%%%%%%%%%%%%%%%%%%%%%%%%%%

In the following, we assume that the system matrix $A_n$ in \eqref{eq:sys} is derived via finite difference discretization of the $d$-dimensional elliptic PDE problem \eqref{eq:PDEs} of order $2q$, $q \in \NN$, with Dirichlet boundary conditions.

In this section, we exhibit a new class of V-cycle grid transfer operators $P_{\mathbf{n}_j}$, $j=0, \ldots, \ell-1$, defined from symbols of subdivision schemes with certain polynomial generation properties, see Theorem \ref{t:subd_vcycle}. Then, in subsections \ref{subsec:interpolatory_gto} and \ref{subsec:approximating_gto}, we provide explicit examples of bivariate grid transfer operators defined from the symbols of anisotropic interpolatory and approximating subdivision schemes from Section \ref{sec:anisotropic_interp} and \ref{sec:anisotropic_pseudo}, respectively.

Let $p$ be the symbol in \eqref{eq:symbol_trigo_poly} associated to the subdivision scheme $S_{\mathbf{p}}$. For
\[
\mathbf{z} = e^{-\mathrm{i} \mathbf{x}} = (e^{- \mathrm{i} x_1}, \ldots, e^{- \mathrm{i} x_d}) \in (\CC \setminus \set{0})^d, \qquad \mathbf{x} \in \RR^d,
\]
the symbol $p$ is a $2 \pi$-periodic trigonometric polynomial. Thus, we write 
\begin{equation} \label{eq:trig_from_symb}
p(\mathbf{y}) := p(e^{- \mathrm{i} \mathbf{y}}), \qquad \mathbf{y} \in [0,2\pi)^d.
\end{equation}
By slight abuse of notation, we call $p$ both the symbol of the subdivision scheme and the associated trigonometric polynomial in \eqref{eq:trig_from_symb}, since it is clear from the context to which "class" of polynomials (Laurent or trigonometric) $p$ belongs.

\begin{theorem} \label{t:subd_vcycle}
Let $S_{\mathbf{p}}$ be a subdivision scheme with dilation $M=\underset{i=1, \ldots, d}{\mathrm{diag}} m_i \in \NN^{d \times d}$, $m_i \geq 2$,  and mask $\mathbf{p}$. If the symbol $p$ in \eqref{eq:symbol_trigo_poly} associated to the subdivision scheme $S_{\mathbf{p}}$ satisfies
\begin{equation} \label{aux}
\begin{array}{ll}
(i)  & D^{\bm{\mu}} p (\bm{\varepsilon}) = 0, \qquad \forall \, \bm{\varepsilon} \in E_M \setminus \set{\mathbf{1}}, \qquad \bm{\mu} \in \NN_0^d, \qquad \abs{\bm{\mu}} \leq 2q-1,  \\
(ii) & p (\mathbf{1}) = \abs{ \text{det } M} = { \displaystyle \prod_{i=1}^d m_i},
\end{array}
\end{equation}
then the associated trigonometric polynomial in \eqref{eq:trig_from_symb} satisfies conditions (i) and (ii) in \eqref{eq:opt_Vcycle_conditions}.
\end{theorem}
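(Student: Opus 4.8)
The plan is to read off conditions \eqref{eq:opt_Vcycle_conditions}(i)--(ii) for the trigonometric polynomial directly from the algebraic hypotheses \eqref{aux}(i)--(ii) on the symbol, via the dictionary \eqref{eq:trig_from_symb} between the Laurent and trigonometric pictures. The key observation is bookkeeping: by the definition of $E_M$, a point $\bm\varepsilon\in E_M\setminus\{\mathbf 1\}$ is precisely $\bm\varepsilon=e^{-\mathrm i 2\pi\mathbf y}$ with $\mathbf y=M^{-1}\bm\gamma$ for some $\bm\gamma\in\Gamma$, that is $y_i\in\{0,\tfrac1{m_i},\dots,\tfrac{m_i-1}{m_i}\}$ for $i=1,\dots,d$, and $\bm\varepsilon\ne\mathbf 1$ corresponds to $\mathbf y\ne\bm0$; these are exactly the vectors $\mathbf y$ occurring in \eqref{eq:opt_Vcycle_conditions}(i). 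Hence \eqref{eq:opt_Vcycle_conditions}(i) is a statement about the trigonometric polynomial near the points $2\pi\mathbf y$, which under $\mathbf z=e^{-\mathrm i\mathbf x}$ translates into a statement about the Laurent polynomial $p$ near the points $\bm\varepsilon\in E_M\setminus\{\mathbf 1\}$.

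To establish \eqref{eq:opt_Vcycle_conditions}(i), I would fix such a $\mathbf y\ne\bm0$, set $\bm\varepsilon=e^{-\mathrm i 2\pi\mathbf y}\in E_M\setminus\{\mathbf 1\}$, and for $\mathbf x\in\RR^d$ put $\mathbf w=e^{-\mathrm i(\mathbf x+2\pi\mathbf y)}$, so that componentwise $w_j=\varepsilon_j e^{-\mathrm i x_j}$ and, by \eqref{eq:trig_from_symb}, $p(\mathbf x+2\pi\mathbf y)=p(\mathbf w)$ with $p$ the Laurent polynomial \eqref{eq:symbol_trigo_poly}. Since every component of $\bm\varepsilon$ has modulus $1$, $p$ is analytic on a neighbourhood of $\bm\varepsilon$; hypothesis \eqref{aux}(i) states that every partial derivative of $p$ of total order $\le 2q-1$ vanishes at $\bm\varepsilon$, so Taylor's theorem furnishes a constant $C$ with $|p(\mathbf w)|\le C\|\mathbf w-\bm\varepsilon\|^{2q}$ for $\mathbf w$ close to $\bm\varepsilon$. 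Combining this with the elementary estimate $|w_j-\varepsilon_j|=|e^{-\mathrm i x_j}-1|\le|x_j|\le\|\mathbf x\|$, which gives $\|\mathbf w-\bm\varepsilon\|\le c\|\mathbf x\|$ for a dimensional constant $c$ and $\mathbf w\to\bm\varepsilon$ as $\mathbf x\to\bm0$, one obtains $|p(\mathbf x+2\pi\mathbf y)|\le C c^{2q}\|\mathbf x\|^{2q}$ for $\|\mathbf x\|$ small, hence $\limsup_{\|\mathbf x\|\to0}p(\mathbf x+2\pi\mathbf y)/\|\mathbf x\|^{2q}<\infty$, which is \eqref{eq:opt_Vcycle_conditions}(i).

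Condition \eqref{eq:opt_Vcycle_conditions}(ii) is then immediate: by \eqref{aux}(ii), $p(\bm0)=p(e^{-\mathrm i\bm0})=p(\mathbf 1)=\prod_{i=1}^d m_i$, so that $p(\mathbf x)-\prod_{i=1}^d m_i=p(\mathbf x)-p(\bm0)$; since $p$ is a trigonometric polynomial it is Lipschitz near $\bm0$, whence $|p(\mathbf x)-p(\bm0)|\le L\|\mathbf x\|$ for $\|\mathbf x\|$ small, and the $\limsup$ in \eqref{eq:opt_Vcycle_conditions}(ii) is bounded by $L$.

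The only step requiring genuine care --- the main point of the proof --- is the passage from the vanishing of the $\mathbf z$-derivatives of the Laurent polynomial at $\bm\varepsilon$ to the $O(\|\mathbf x\|^{2q})$ bound on the trigonometric polynomial near $2\pi\mathbf y$; once the change of variables $\mathbf w=e^{-\mathrm i(\mathbf x+2\pi\mathbf y)}$ is set up and one notes that $p$ is smooth near $\bm\varepsilon$ because $\bm\varepsilon$ avoids the coordinate hyperplanes, this reduces to a routine multivariate Taylor estimate, and the coset bookkeeping together with the Lipschitz bound for \eqref{eq:opt_Vcycle_conditions}(ii) are elementary.
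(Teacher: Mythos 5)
Your proposal is correct and follows essentially the same route as the paper's proof: identify the points of $E_M\setminus\{\mathbf 1\}$ with the vectors $2\pi\mathbf y$ appearing in \eqref{eq:opt_Vcycle_conditions}(i), and then deduce the two $\limsup$ bounds from the vanishing-derivative and normalization hypotheses. The paper records this translation and asserts the implication directly, while you additionally spell out the Taylor estimate near $\bm\varepsilon$ and the Lipschitz bound at $\mathbf 0$, which the paper leaves implicit.
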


\begin{proof}
To prove the claim, we show that $(i)$ and $(ii)$ in \eqref{aux} imply conditions $(i)$ and $(ii)$ in \eqref{eq:opt_Vcycle_conditions}. Let $p$ be the trigonometric polynomial in \eqref{eq:trig_from_symb} associated to the symbol of the subdivision scheme $S_{\mathbf{p}}$.
 From
\[
\begin{split}
E_M \setminus \set{\mathbf{1}} &= \Set{ e^{- 2 \pi \mathrm{i} M^{-1} \bm{\gamma}} \, : \, \bm{\gamma} \in \Gamma \setminus \set{\bm{0}} }, \\
&= \Set{ e^{- 2 \pi \mathrm{i} \mathbf{y}} \, : \, \mathbf{y} = (y_1, \ldots, y_d) \in \QQ^d, \, y_i \in \Set{0, \frac{1}{m_i}, \ldots, \frac{m_i-1}{m_i}}, \, i = 1, \ldots, d, \, \mathbf{y} \neq \bm{0} },
\end{split}
\]
conditions $(i)$ and $(ii)$ in \eqref{aux} become
\[
\begin{array}{ll}
 (i) & D^{\bm{\mu}} p ( 2 \pi \mathbf{y}) = 0, \quad \forall \, \mathbf{y}= (y_1, \ldots, y_d) \in \QQ^d, \, y_i \in \Set{0, \frac{1}{m_i}, \ldots, \frac{m_i-1}{m_i}}, \, i = 1, \ldots, d, \, \mathbf{y} \neq \bm{0}, \quad  \abs{\bm{\mu}} \leq 2q-1,  \\
(ii) & p (\mathbf{0}) = {\displaystyle \prod_{i=1}^d m_i},
\end{array}
\]
which imply conditions $(i)$ and $(ii)$ in \eqref{eq:opt_Vcycle_conditions}.
\end{proof}

In subsections \ref{subsec:interpolatory_gto} and \ref{subsec:approximating_gto}, we restrict our attention to the bivariate case and, on the strength of Theorem \ref{t:subd_vcycle}, we propose appropriate grid transfer operators for the bivariate elliptic  PDE problem \eqref{eq:PDEs} of order $2q$, $q \in \NN$, with Dirichlet boundary conditions. 
We recall that, from the mask $\mathbf{p}$ of the bivariate subdivision scheme $\mathcal{S}_{\mathbf{p}}$ of dilation $M$, one can build the trigonometric polynomial $p (\mathbf{x})$, $\mathbf{x} \in [0, 2 \pi )^2$, as in \eqref{eq:trig_from_symb}.
Then, for $j=0, \ldots, \ell-1$, the grid transfer operator $P_{\mathbf{n}_j}$ at level $j$ associated to the subdivision scheme $\mathcal{S}_{\mathbf{p}}$ is defined by \eqref{eq:vcycle_notation}.

%%%%%%%%%%%%%%%%%%%%%%%%%%%
\subsection{Interpolatory grid transfer operators}
\label{subsec:interpolatory_gto}
%%%%%%%%%%%%%%%%%%%%%%%%%%%

The following result is a direct consequence of Theorem \ref{t:subd_vcycle}.

\begin{proposition} \label{p:subd_anisotropic_vcycle}
Let $n \in \NN$, $n \geq q$. Then, the trigonometric polynomial $a_{M,n}$ associated to the anisotropic interpolatory subdvision scheme $\mathcal{S}_{\mathbf{a}_{M,n}}$ in Definition \ref{d:interp_anisotropic} satisfies conditions (i) and (ii) in \eqref{eq:opt_Vcycle_conditions}.
\end{proposition}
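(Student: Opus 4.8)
The plan is to deduce the statement directly from Theorem~\ref{t:subd_vcycle} by checking its hypotheses $(i)$ and $(ii)$ in \eqref{aux} (with $d=2$ and $M$ as in \eqref{eq:dilation}) for the symbol $a_{M,n}$; Theorem~\ref{t:subd_vcycle} then produces conditions $(i)$ and $(ii)$ of \eqref{eq:opt_Vcycle_conditions} for the associated trigonometric polynomial, which is the assertion. Since $n\ge q$ gives $2q-1\le 2n-1$, it is enough to show
\[
\text{(a)}\quad a_{M,n}(\mathbf 1)=\abs{\det M}=2m, \qquad\qquad
\text{(b)}\quad D^{\bm\mu}a_{M,n}(\bm\varepsilon)=0 \text{ for all } \bm\varepsilon\in E_M\setminus\set{\mathbf 1},\ \abs{\bm\mu}\le 2n-1.
\]
Condition (b) is precisely \eqref{conditions:poly_generation}, i.e.\ the statement that $\mathcal S_{\mathbf a_{M,n}}$ generates polynomials up to degree $2n-1$; by Proposition~\ref{p:anisotropic_repr} the scheme even reproduces polynomials up to that degree, and reproduction implies generation (as recorded before Definition~\ref{d:poly_generation}), so (b) follows from Theorem~\ref{t:poly_generation}. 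Condition (a) is the $\bm\mu=\bm 0$ case of \eqref{conditions:poly_reproduction} in Theorem~\ref{t:poly_reproduction} (the right-hand side being $\abs{\det M}$ times an empty product); it also drops out of the factorization of $a_{M,n}$ constructed in the proof of Proposition~\ref{p:anisotropic_repr} upon evaluation at $(1,1)$, or from Proposition~\ref{p:anisotropic_interp} together with Theorem~\ref{t:interp_symbol} at $\bm z=\mathbf 1$.

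If one prefers an argument that bypasses convergence of $\mathcal S_{\mathbf a_{M,n}}$ (only treated in a few cases in subsection~\ref{ssec:convergence}), (b) can be read off the structure of $a_{M,n}$ directly. Write $E_M\setminus\set{\mathbf 1}=\set{(-1,\xi^j):0\le j\le m-1}\cup\set{(1,\xi^j):1\le j\le m-1}$ with $\xi=e^{-2\pi\mathrm i/m}$. The two ingredients are: first, the univariate Dubuc--Deslauriers symbols $a_{2,h}$ and $a_{m,h}$ generate polynomials up to degree $2h-1$, so by the univariate factorization \eqref{eq:symbol_factorization_generation} the factor $a_{2,h}(z_1)$ vanishes at $z_1=-1$ to order $2h$ and $a_{m,h}(z_2)$ vanishes at every $\xi^j$, $j\ne 0$, to order $2h$; second, the telescoping identities
\[
a_{M,n}(z_1,1)=m\,a_{2,n}(z_1), \qquad a_{M,n}(1,z_2)=2\,a_{m,n}(z_2),
\]
obtained exactly as in the proof of Proposition~\ref{p:anisotropic_interp} by collapsing the two sums in \eqref{eq:interp_anisotropic} (using $a_{2,h}(1)=2$, $a_{m,h}(1)=m$). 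A Leibniz-rule count over the summands of $a_{M,n}$ then shows that a nonvanishing contribution to $D^{\bm\mu}a_{M,n}(\bm\varepsilon)$ would force the vanishing orders in $z_1$ and in $z_2$ to add up to at least $2n$; hence there is no such contribution when $\abs{\bm\mu}\le 2n-1$ and $\bm\varepsilon=(-1,\xi^j)$ with $j\ne 0$, while at $(-1,1)$ and at $(1,\xi^j)$ with $j\ne 0$ the surviving terms assemble, via the telescoping identities, into $m\,D^{\bm\mu}a_{2,n}(-1)$ and $2\,D^{\bm\mu}a_{m,n}(\xi^j)$ respectively, both of which vanish for $\abs{\bm\mu}\le 2n-1$ by the univariate generation property.

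With (a) and (b) in hand, Theorem~\ref{t:subd_vcycle} applies and gives conditions $(i)$ and $(ii)$ of \eqref{eq:opt_Vcycle_conditions} for the trigonometric polynomial associated to $a_{M,n}$, completing the proof. The only step requiring genuine work is (b): the behaviour at $\mathbf 1$ is already encoded in the factorization from Proposition~\ref{p:anisotropic_repr}, but the vanishing of the derivatives at the remaining points of $E_M$ is not visible there and needs either the full reproduction statement (Theorem~\ref{t:poly_reproduction}) or the explicit telescoping/Leibniz argument above; everything else is a direct quotation of earlier results.
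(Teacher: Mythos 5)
Your main argument is exactly the paper's proof: condition $(i)$ of \eqref{aux} follows from Proposition \ref{p:anisotropic_repr} (reproduction, hence generation, up to degree $2n-1\ge 2q-1$) via Theorem \ref{t:poly_generation}, condition $(ii)$ from $a_{M,n}(\mathbf 1)=2m$, and then Theorem \ref{t:subd_vcycle} concludes. Your optional second paragraph (the telescoping/Leibniz argument avoiding the convergence hypothesis of Theorem \ref{t:poly_generation}) is a sound extra that the paper does not bother with, but it is not needed to match the paper's reasoning.
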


\begin{proof}
By Proposition \ref{p:anisotropic_repr}, the anisotropic interpolatory subdvision scheme $\mathcal{S}_{\mathbf{a}_{M,n}}$ in Definition \ref{d:interp_anisotropic} generates polynomials up to degree $2n-1 \geq 2q-1$. Thus, by Theorem \ref{t:poly_generation}, $(i)$ of Theorem \ref{t:subd_vcycle} is satisfied.

\noindent Moreover, the symbol $a_{M,n}$ in \eqref{eq:interp_anisotropic} satisfies the necessary condition for convergence, namely $a_{M,n} (\mathbf{1}) = \abs{ \text{det } M} = 2m$.
Thus, $(ii)$ of Theorem \ref{t:subd_vcycle} is also satisfied.
\end{proof}

In Examples \ref{ex:interp23} and \ref{ex:interp25}, we give several examples of masks of the anisotropic interpolatory subdvision schemes $\mathcal{S}_{\mathbf{a}_{M,n}}$ with $M = { \displaystyle \begin{pmatrix} 2 & 0 \\ 0 & 3 \end{pmatrix}}$ and $M = { \displaystyle \begin{pmatrix} 2 & 0 \\ 0 & 5 \end{pmatrix}}$, respectively. The corresponding grid transfer operators are used in our numerical experiments.

\begin{example} \label{ex:interp23}
We focus our attention on the case $M = { \displaystyle \begin{pmatrix} 2 & 0 \\ 0 & 3 \end{pmatrix}}$. For $n = 1,2,3$, the masks of the anisotropic interpolatory subdvision scheme $\mathcal{S}_{\mathbf{a}_{M,n}}$ in Definition \ref{d:interp_anisotropic} are
\[
\arraycolsep=3pt\def\arraystretch{1.4}
\mathbf{a}_{M,1} = \left(
\begin{array}{ccccc}
 \frac{1}{6} & \frac{1}{3} & \frac{1}{2} & \frac{1}{3} & \frac{1}{6} \\
 \frac{1}{3} & \frac{2}{3} & 1 & \frac{2}{3} & \frac{1}{3} \\
 \frac{1}{6} & \frac{1}{3} & \frac{1}{2} & \frac{1}{3} & \frac{1}{6} \\
\end{array}
\right), \]
\[
\arraycolsep=3pt\def\arraystretch{1.4}
\mathbf{a}_{M,2} = {\small
\left(
\begin{array}{ccccccccccc}
 0 & 0 & 0 & -\frac{1}{48} & -\frac{1}{24} & -\frac{1}{16} & -\frac{1}{24} & -\frac{1}{48} & 0 & 0 & 0 \\
 0 & 0 & 0 & 0 & 0 & 0 & 0 & 0 & 0 & 0 & 0 \\
 -\frac{2}{81} & -\frac{5}{162} & 0 & \frac{89}{432} & \frac{89}{216} & \frac{9}{16} & \frac{89}{216} & \frac{89}{432} & 0 & -\frac{5}{162} & -\frac{2}{81} \\
 -\frac{4}{81} & -\frac{5}{81} & 0 & \frac{10}{27} & \frac{20}{27} & 1 & \frac{20}{27} & \frac{10}{27} & 0 & -\frac{5}{81} & -\frac{4}{81} \\
 -\frac{2}{81} & -\frac{5}{162} & 0 & \frac{89}{432} & \frac{89}{216} & \frac{9}{16} & \frac{89}{216} & \frac{89}{432} & 0 & -\frac{5}{162} & -\frac{2}{81} \\
 0 & 0 & 0 & 0 & 0 & 0 & 0 & 0 & 0 & 0 & 0 \\
 0 & 0 & 0 & -\frac{1}{48} & -\frac{1}{24} & -\frac{1}{16} & -\frac{1}{24} & -\frac{1}{48} & 0 & 0 & 0 \\
\end{array}
\right)}, \]
\[
\arraycolsep=3pt\def\arraystretch{1.4}
\mathbf{a}_{M,3} = {\small
\left(
\begin{array}{ccccccccccccccccc}
 0 & 0 & 0 & 0 & 0 & 0 & \frac{1}{256} & \frac{1}{128} & \frac{3}{256} & \frac{1}{128} & \frac{1}{256} & 0 & 0 & 0 & 0 & 0 & 0 \\
 0 & 0 & 0 & 0 & 0 & 0 & 0 & 0 & 0 & 0 & 0 & 0 & 0 & 0 & 0 & 0 & 0 \\
 0 & 0 & 0 & \frac{1}{324} & \frac{5}{1296} & 0 & -\frac{241}{6912} & -\frac{241}{3456} & -\frac{25}{256} & -\frac{241}{3456} & -\frac{241}{6912} & 0 & \frac{5}{1296} & \frac{1}{324} & 0 & 0 & 0 \\
 0 & 0 & 0 & 0 & 0 & 0 & 0 & 0 & 0 & 0 & 0 & 0 & 0 & 0 & 0 & 0 & 0 \\
 \frac{7}{1458} & \frac{4}{729} & 0 & -\frac{121}{2916} & -\frac{605}{11664} & 0 & \frac{20809}{93312} & \frac{20809}{46656} & \frac{75}{128} & \frac{20809}{46656} & \frac{20809}{93312} & 0 & -\frac{605}{11664} & -\frac{121}{2916} & 0 & \frac{4}{729} & \frac{7}{1458} \\
 \frac{7}{729} & \frac{8}{729} & 0 & -\frac{56}{729} & -\frac{70}{729} & 0 & \frac{280}{729} & \frac{560}{729} & 1 & \frac{560}{729} & \frac{280}{729} & 0 & -\frac{70}{729} & -\frac{56}{729} & 0 & \frac{8}{729} & \frac{7}{729} \\
 \frac{7}{1458} & \frac{4}{729} & 0 & -\frac{121}{2916} & -\frac{605}{11664} & 0 & \frac{20809}{93312} & \frac{20809}{46656} & \frac{75}{128} & \frac{20809}{46656} & \frac{20809}{93312} & 0 & -\frac{605}{11664} & -\frac{121}{2916} & 0 & \frac{4}{729} & \frac{7}{1458} \\
 0 & 0 & 0 & 0 & 0 & 0 & 0 & 0 & 0 & 0 & 0 & 0 & 0 & 0 & 0 & 0 & 0 \\
 0 & 0 & 0 & \frac{1}{324} & \frac{5}{1296} & 0 & -\frac{241}{6912} & -\frac{241}{3456} & -\frac{25}{256} & -\frac{241}{3456} & -\frac{241}{6912} & 0 & \frac{5}{1296} & \frac{1}{324} & 0 & 0 & 0 \\
 0 & 0 & 0 & 0 & 0 & 0 & 0 & 0 & 0 & 0 & 0 & 0 & 0 & 0 & 0 & 0 & 0 \\
 0 & 0 & 0 & 0 & 0 & 0 & \frac{1}{256} & \frac{1}{128} & \frac{3}{256} & \frac{1}{128} & \frac{1}{256} & 0 & 0 & 0 & 0 & 0 & 0 \\
\end{array}
\right)}.
\]
\end{example}

\begin{example} \label{ex:interp25}
We focus our attention on the case $M = { \displaystyle \begin{pmatrix} 2 & 0 \\ 0 & 5 \end{pmatrix}}$. For $n = 1,2$, the masks of the anisotropic interpolatory subdvision scheme $\mathcal{S}_{\mathbf{a}_{M,n}}$ in Definition \ref{d:interp_anisotropic} are
\[
\arraycolsep=3pt\def\arraystretch{1.4}
\mathbf{a}_{M,1} = \left(
\begin{array}{ccccccccc}
 \frac{1}{10} & \frac{1}{5} & \frac{3}{10} & \frac{2}{5} & \frac{1}{2} & \frac{2}{5} & \frac{3}{10} & \frac{1}{5} & \frac{1}{10} \\
 \frac{1}{5} & \frac{2}{5} & \frac{3}{5} & \frac{4}{5} & 1 & \frac{4}{5} & \frac{3}{5} & \frac{2}{5} & \frac{1}{5} \\
 \frac{1}{10} & \frac{1}{5} & \frac{3}{10} & \frac{2}{5} & \frac{1}{2} & \frac{2}{5} & \frac{3}{10} & \frac{1}{5} & \frac{1}{10} \\
\end{array}
\right), \]
\[
\arraycolsep=3pt\def\arraystretch{1.4}
\mathbf{a}_{M,2} = {\small
\left(
\begin{array}{ccccccccccccccccccc}
 0 & 0 & 0 & 0 & 0 & -\frac{1}{80} & -\frac{1}{40} & -\frac{3}{80} & -\frac{1}{20} & -\frac{1}{16} & -\frac{1}{20} & -\frac{3}{80} & -\frac{1}{40} & -\frac{1}{80} & 0 & 0 & 0 & 0 & 0 \\
 0 & 0 & 0 & 0 & 0 & 0 & 0 & 0 & 0 & 0 & 0 & 0 & 0 & 0 & 0 & 0 & 0 & 0 & 0 \\
 -\frac{2}{125} & -\frac{7}{250} & -\frac{4}{125} & -\frac{3}{125} & 0 & \frac{241}{2000} & \frac{249}{1000} & \frac{747}{2000} & \frac{241}{500} & \frac{9}{16} & \frac{241}{500} & \frac{747}{2000} & \frac{249}{1000} & \frac{241}{2000} & 0 & -\frac{3}{125} & -\frac{4}{125} & -\frac{7}{250} & -\frac{2}{125} \\
 -\frac{4}{125} & -\frac{7}{125} & -\frac{8}{125} & -\frac{6}{125} & 0 & \frac{27}{125} & \frac{56}{125} & \frac{84}{125} & \frac{108}{125} & 1 & \frac{108}{125} & \frac{84}{125} & \frac{56}{125} & \frac{27}{125} & 0 & -\frac{6}{125} & -\frac{8}{125} & -\frac{7}{125} & -\frac{4}{125} \\
 -\frac{2}{125} & -\frac{7}{250} & -\frac{4}{125} & -\frac{3}{125} & 0 & \frac{241}{2000} & \frac{249}{1000} & \frac{747}{2000} & \frac{241}{500} & \frac{9}{16} & \frac{241}{500} & \frac{747}{2000} & \frac{249}{1000} & \frac{241}{2000} & 0 & -\frac{3}{125} & -\frac{4}{125} & -\frac{7}{250} & -\frac{2}{125} \\
 0 & 0 & 0 & 0 & 0 & 0 & 0 & 0 & 0 & 0 & 0 & 0 & 0 & 0 & 0 & 0 & 0 & 0 & 0 \\
 0 & 0 & 0 & 0 & 0 & -\frac{1}{80} & -\frac{1}{40} & -\frac{3}{80} & -\frac{1}{20} & -\frac{1}{16} & -\frac{1}{20} & -\frac{3}{80} & -\frac{1}{40} & -\frac{1}{80} & 0 & 0 & 0 & 0 & 0 \\
\end{array}
\right)}.
\]
\end{example}

%%%%%%%%%%%%%%%%%%%%%%%%%%%
\subsection{Approximating grid transfer operators}
\label{subsec:approximating_gto}
%%%%%%%%%%%%%%%%%%%%%%%%%%%

In this section, we focus our attention on the case $M = {\displaystyle \begin{pmatrix} 2 & 0 \\ 0 & 3 \end{pmatrix}}$.
The following result is a direct consequence of Theorem \ref{t:subd_vcycle}. We omit the proof of Proposition \ref{p:subd_anisotropic_approx_vcycle} since it follows by the same argument as in the proof of Proposition \ref{p:subd_anisotropic_vcycle}.

\begin{proposition} \label{p:subd_anisotropic_approx_vcycle}
Let $n \in \NN$, $n \geq q$, and $\ell \in \set{0, \ldots, n-1}$. Then, the trigonometric polynomial $B_{n, \ell}$ associated to the anisotropic symmetric four directional approximating scheme $\mathcal{S}_{B_{n,\ell}}$ in Definition \ref{d:pseudo_anisotropic} satisfies conditions (i) and (ii) in \eqref{eq:opt_Vcycle_conditions}.
\end{proposition}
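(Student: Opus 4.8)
The plan is to follow, essentially verbatim, the proof of Proposition \ref{p:subd_anisotropic_vcycle}, with Proposition \ref{p:pseudo_generation} taking the place of Proposition \ref{p:anisotropic_repr}. By Theorem \ref{t:subd_vcycle} (applied with $M=\mathrm{diag}(2,3)$, so $\abs{\det M}=6$), it suffices to check that the symbol $B_{n,\ell}$ in Definition \ref{d:pseudo_anisotropic} satisfies the two hypotheses $(i)$ and $(ii)$ of \eqref{aux}; the assertion on the associated $2\pi$-periodic trigonometric polynomial is then immediate.

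For hypothesis $(i)$: since $n\geq q$ we have $2n-1\geq 2q-1$, and Proposition \ref{p:pseudo_generation} tells us that $\mathcal{S}_{B_{n,\ell}}$ generates polynomials up to degree $2n-1$; hence, by Theorem \ref{t:poly_generation}, $D^{\bm\mu}B_{n,\ell}(\bm\varepsilon)=0$ for every $\bm\varepsilon\in E_M\setminus\set{\mathbf{1}}$ and every $\bm\mu\in\NN_0^2$ with $\abs{\bm\mu}\leq 2q-1$. This is exactly $(i)$ of \eqref{aux}.

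For hypothesis $(ii)$, I would simply evaluate the formula of Definition \ref{d:pseudo_anisotropic} at $(z_1,z_2)=(1,1)$. Each summand with $i\geq 1$ contains the factor $\delta_1(z_1)^{i-j}\delta_2(z_2)^{j}$ in which $(i-j)+j=i\geq 1$; since $\delta_1(1)=\delta_2(1)=0$, all such terms vanish, so only the $i=0$ term $\mathrm{c}_n^{(0,0)}B_n(1,1)$ remains. A direct substitution in Definition \ref{def:box_anisotropic} shows that each of the three rational factors of $B_n$ equals $1$ at $(1,1)$, so $B_n(1,1)=6$, while the normalization built into the recursion of Definition \ref{d:pseudo_anisotropic} gives $\mathrm{c}_n^{(0,0)}=1$; hence $B_{n,\ell}(\mathbf{1})=6=\abs{\det M}$, which is $(ii)$ of \eqref{aux}. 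Alternatively, $B_{n,\ell}(\mathbf{1})=\abs{\det M}$ is forced by the convergence of $\mathcal{S}_{B_{n,\ell}}$ noted in the Remark following Proposition \ref{p:pseudo_generation}. Applying Theorem \ref{t:subd_vcycle} then yields conditions $(i)$ and $(ii)$ in \eqref{eq:opt_Vcycle_conditions} for the trigonometric polynomial $B_{n,\ell}$, completing the argument.

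I do not anticipate any real difficulty: the proof is a transcription of that of Proposition \ref{p:subd_anisotropic_vcycle}, and the only place deserving a line of verification is hypothesis $(ii)$, namely checking that the correction terms $\delta_1(z_1)^{i-j}\delta_2(z_2)^{j}$ were designed precisely so as not to disturb the value $p(\mathbf{1})=\abs{\det M}$ of the underlying box-spline symbol $B_n$.
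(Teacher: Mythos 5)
Your proof is correct and coincides with the paper's intended argument: the paper in fact omits the proof of this proposition, stating only that it follows by the same reasoning as Proposition \ref{p:subd_anisotropic_vcycle}, which is precisely what you carry out (generation of degree $2n-1\geq 2q-1$ via Proposition \ref{p:pseudo_generation} for hypothesis $(i)$, and $B_{n,\ell}(\mathbf{1})=6=\abs{\det M}$ for hypothesis $(ii)$). Your explicit verification that the correction terms vanish at $\mathbf{1}$ because $\delta_1(1)=\delta_2(1)=0$ is a welcome extra line that the paper leaves implicit.
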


In Example \ref{ex:approx23}, we give several examples of masks of the anisotropic symmetric four directional approximating scheme $\mathcal{S}_{B_{n,\ell}}$ from Definition \ref{d:pseudo_anisotropic}. 
The corresponding grid transfer operators will be compared with the ones corresponding to the masks in Example \ref{ex:interp23} for our multigrid experiments in subsection \ref{subsec:numerical_example}.

\begin{example} \label{ex:approx23}
Let $n=1$, $\ell=0$. The mask of the anisotropic symmetric four directional approximating scheme $\mathcal{S}_{B_{1,0}}$ in Definition \ref{d:pseudo_anisotropic} is equal to the mask of the anisotropic interpolatory subdvision scheme $\mathcal{S}_{\mathbf{a}_{M,0}}$ in Definition \ref{d:interp_anisotropic}, namely 
\[
\arraycolsep=3pt\def\arraystretch{1.4}
\mathbf{B}_{1,0} = \left(
\begin{array}{ccccc}
 \frac{1}{6} & \frac{1}{3} & \frac{1}{2} & \frac{1}{3} & \frac{1}{6} \\
 \frac{1}{3} & \frac{2}{3} & 1 & \frac{2}{3} & \frac{1}{3} \\
 \frac{1}{6} & \frac{1}{3} & \frac{1}{2} & \frac{1}{3} & \frac{1}{6} \\
\end{array}
\right).
\]
Let $n=2$. For $\ell=0,1$, the masks of the anisotropic symmetric four directional approximating schemes $\mathcal{S}_{B_{2,\ell}}$ in Definition \ref{d:pseudo_anisotropic} are
\[
\arraycolsep=3pt\def\arraystretch{1.4}
\mathbf{B}_{2,0} = \left(
\begin{array}{ccccccccc}
 0 & \frac{1}{108} & \frac{1}{24} & \frac{1}{12} & \frac{23}{216} & \frac{1}{12} & \frac{1}{24} & \frac{1}{108} & 0 \\
 \frac{1}{54} & \frac{2}{27} & \frac{5}{27} & \frac{8}{27} & \frac{19}{54} & \frac{8}{27} & \frac{5}{27} & \frac{2}{27} & \frac{1}{54} \\
 \frac{1}{27} & \frac{7}{54} & \frac{31}{108} & \frac{23}{54} & \frac{53}{108} & \frac{23}{54} & \frac{31}{108} & \frac{7}{54} & \frac{1}{27} \\
 \frac{1}{54} & \frac{2}{27} & \frac{5}{27} & \frac{8}{27} & \frac{19}{54} & \frac{8}{27} & \frac{5}{27} & \frac{2}{27} & \frac{1}{54} \\
 0 & \frac{1}{108} & \frac{1}{24} & \frac{1}{12} & \frac{23}{216} & \frac{1}{12} & \frac{1}{24} & \frac{1}{108} & 0 \\
\end{array}
\right), \]
\[
\arraycolsep=3pt\def\arraystretch{1.4}
\mathbf{B}_{2,1} = \left(
\begin{array}{ccccccccccc}
 0 & 0 & 0 & -\frac{1}{48} & -\frac{1}{24} & -\frac{1}{16} & -\frac{1}{24} & -\frac{1}{48} & 0 & 0 & 0 \\
 0 & 0 & \frac{1}{108} & 0 & 0 & -\frac{1}{54} & 0 & 0 & \frac{1}{108} & 0 & 0 \\
 -\frac{2}{81} & -\frac{5}{162} & 0 & \frac{89}{432} & \frac{89}{216} & \frac{9}{16} & \frac{89}{216} & \frac{89}{432} & 0 & -\frac{5}{162} & -\frac{2}{81} \\
 -\frac{4}{81} & -\frac{5}{81} & -\frac{1}{54} & \frac{10}{27} & \frac{20}{27} & \frac{28}{27} & \frac{20}{27} & \frac{10}{27} & -\frac{1}{54} & -\frac{5}{81} & -\frac{4}{81} \\
 -\frac{2}{81} & -\frac{5}{162} & 0 & \frac{89}{432} & \frac{89}{216} & \frac{9}{16} & \frac{89}{216} & \frac{89}{432} & 0 & -\frac{5}{162} & -\frac{2}{81} \\
 0 & 0 & \frac{1}{108} & 0 & 0 & -\frac{1}{54} & 0 & 0 & \frac{1}{108} & 0 & 0 \\
 0 & 0 & 0 & -\frac{1}{48} & -\frac{1}{24} & -\frac{1}{16} & -\frac{1}{24} & -\frac{1}{48} & 0 & 0 & 0 \\
\end{array}
\right).
\]
Finally, let $n=3$. For $\ell=0,1,2$, the masks of the anisotropic symmetric four directional approximating schemes $\mathcal{S}_{B_{3,\ell}}$ in Definition \ref{d:pseudo_anisotropic} are
\[
\arraycolsep=3pt\def\arraystretch{1.4}
\mathbf{B}_{3,0} = {\small
\left(
\begin{array}{ccccccccccccc}
 0 & \frac{1}{3888} & \frac{13}{7776} & \frac{7}{1296} & \frac{5}{432} & \frac{23}{1296} & \frac{53}{2592} & \frac{23}{1296} & \frac{5}{432} & \frac{7}{1296} & \frac{13}{7776} & \frac{1}{3888} & 0 \\
 \frac{1}{1944} & \frac{7}{1944} & \frac{55}{3888} & \frac{71}{1944} & \frac{5}{72} & \frac{65}{648} & \frac{49}{432} & \frac{65}{648} & \frac{5}{72} & \frac{71}{1944} & \frac{55}{3888} & \frac{7}{1944} & \frac{1}{1944} \\
 \frac{1}{486} & \frac{47}{3888} & \frac{323}{7776} & \frac{379}{3888} & \frac{25}{144} & \frac{313}{1296} & \frac{233}{864} & \frac{313}{1296} & \frac{25}{144} & \frac{379}{3888} & \frac{323}{7776} & \frac{47}{3888} & \frac{1}{486} \\
 \frac{1}{324} & \frac{17}{972} & \frac{113}{1944} & \frac{43}{324} & \frac{25}{108} & \frac{103}{324} & \frac{229}{648} & \frac{103}{324} & \frac{25}{108} & \frac{43}{324} & \frac{113}{1944} & \frac{17}{972} & \frac{1}{324} \\
 \frac{1}{486} & \frac{47}{3888} & \frac{323}{7776} & \frac{379}{3888} & \frac{25}{144} & \frac{313}{1296} & \frac{233}{864} & \frac{313}{1296} & \frac{25}{144} & \frac{379}{3888} & \frac{323}{7776} & \frac{47}{3888} & \frac{1}{486} \\
 \frac{1}{1944} & \frac{7}{1944} & \frac{55}{3888} & \frac{71}{1944} & \frac{5}{72} & \frac{65}{648} & \frac{49}{432} & \frac{65}{648} & \frac{5}{72} & \frac{71}{1944} & \frac{55}{3888} & \frac{7}{1944} & \frac{1}{1944} \\
 0 & \frac{1}{3888} & \frac{13}{7776} & \frac{7}{1296} & \frac{5}{432} & \frac{23}{1296} & \frac{53}{2592} & \frac{23}{1296} & \frac{5}{432} & \frac{7}{1296} & \frac{13}{7776} & \frac{1}{3888} & 0 \\
\end{array}
\right)}, \]
\[
\arraycolsep=3pt\def\arraystretch{1.4}
\mathbf{B}_{3,1} = {\small
\left(
\begin{array}{ccccccccccccccc}
 0 & 0 & 0 & 0 & -\frac{1}{576} & -\frac{1}{128} & -\frac{1}{64} & -\frac{23}{1152} & -\frac{1}{64} & -\frac{1}{128} & -\frac{1}{576} & 0 & 0 & 0 & 0 \\
 0 & 0 & \frac{1}{3888} & -\frac{7}{3888} & -\frac{11}{1296} & -\frac{5}{216} & -\frac{49}{1296} & -\frac{59}{1296} & -\frac{49}{1296} & -\frac{5}{216} & -\frac{11}{1296} & -\frac{7}{3888} & \frac{1}{3888} & 0 & 0 \\
 0 & -\frac{1}{648} & -\frac{11}{1944} & -\frac{11}{972} & -\frac{5}{1296} & \frac{1}{32} & \frac{103}{1296} & \frac{271}{2592} & \frac{103}{1296} & \frac{1}{32} & -\frac{5}{1296} & -\frac{11}{972} & -\frac{11}{1944} & -\frac{1}{648} & 0 \\
 -\frac{1}{243} & -\frac{7}{486} & -\frac{113}{3888} & -\frac{49}{3888} & \frac{257}{3888} & \frac{437}{1944} & \frac{497}{1296} & \frac{595}{1296} & \frac{497}{1296} & \frac{437}{1944} & \frac{257}{3888} & -\frac{49}{3888} & -\frac{113}{3888} & -\frac{7}{486} & -\frac{1}{243} \\
 -\frac{2}{243} & -\frac{25}{972} & -\frac{5}{108} & -\frac{1}{162} & \frac{983}{7776} & \frac{5543}{15552} & \frac{487}{864} & \frac{379}{576} & \frac{487}{864} & \frac{5543}{15552} & \frac{983}{7776} & -\frac{1}{162} & -\frac{5}{108} & -\frac{25}{972} & -\frac{2}{243} \\
 -\frac{1}{243} & -\frac{7}{486} & -\frac{113}{3888} & -\frac{49}{3888} & \frac{257}{3888} & \frac{437}{1944} & \frac{497}{1296} & \frac{595}{1296} & \frac{497}{1296} & \frac{437}{1944} & \frac{257}{3888} & -\frac{49}{3888} & -\frac{113}{3888} & -\frac{7}{486} & -\frac{1}{243} \\
 0 & -\frac{1}{648} & -\frac{11}{1944} & -\frac{11}{972} & -\frac{5}{1296} & \frac{1}{32} & \frac{103}{1296} & \frac{271}{2592} & \frac{103}{1296} & \frac{1}{32} & -\frac{5}{1296} & -\frac{11}{972} & -\frac{11}{1944} & -\frac{1}{648} & 0 \\
 0 & 0 & \frac{1}{3888} & -\frac{7}{3888} & -\frac{11}{1296} & -\frac{5}{216} & -\frac{49}{1296} & -\frac{59}{1296} & -\frac{49}{1296} & -\frac{5}{216} & -\frac{11}{1296} & -\frac{7}{3888} & \frac{1}{3888} & 0 & 0 \\
 0 & 0 & 0 & 0 & -\frac{1}{576} & -\frac{1}{128} & -\frac{1}{64} & -\frac{23}{1152} & -\frac{1}{64} & -\frac{1}{128} & -\frac{1}{576} & 0 & 0 & 0 & 0 \\
\end{array}
\right)}, \]
\[
\arraycolsep=3pt\def\arraystretch{1.4}
\mathbf{B}_{3,2} = {\small
 \left(
\begin{array}{ccccccccccccccccc}
 0 & 0 & 0 & 0 & 0 & 0 & \frac{1}{256} & \frac{1}{128} & \frac{3}{256} & \frac{1}{128} & \frac{1}{256} & 0 & 0 & 0 & 0 & 0 & 0 \\
 0 & 0 & 0 & 0 & 0 & -\frac{1}{576} & 0 & 0 & \frac{1}{288} & 0 & 0 & -\frac{1}{576} & 0 & 0 & 0 & 0 & 0 \\
 0 & 0 & 0 & \frac{1}{324} & \frac{5}{1296} & 0 & -\frac{241}{6912} & -\frac{241}{3456} & -\frac{25}{256} & -\frac{241}{3456} & -\frac{241}{6912} & 0 & \frac{5}{1296} & \frac{1}{324} & 0 & 0 & 0 \\
 0 & 0 & -\frac{1}{648} & 0 & 0 & \frac{17}{1296} & 0 & 0 & -\frac{5}{216} & 0 & 0 & \frac{17}{1296} & 0 & 0 & -\frac{1}{648} & 0 & 0 \\
 \frac{7}{1458} & \frac{4}{729} & 0 & -\frac{121}{2916} & -\frac{605}{11664} & 0 & \frac{20809}{93312} & \frac{20809}{46656} & \frac{75}{128} & \frac{20809}{46656} & \frac{20809}{93312} & 0 & -\frac{605}{11664} & -\frac{121}{2916} & 0 & \frac{4}{729} & \frac{7}{1458} \\
 \frac{7}{729} & \frac{8}{729} & \frac{1}{324} & -\frac{56}{729} & -\frac{70}{729} & -\frac{59}{2592} & \frac{280}{729} & \frac{560}{729} & \frac{449}{432} & \frac{560}{729} & \frac{280}{729} & -\frac{59}{2592} & -\frac{70}{729} & -\frac{56}{729} & \frac{1}{324} & \frac{8}{729} & \frac{7}{729} \\
 \frac{7}{1458} & \frac{4}{729} & 0 & -\frac{121}{2916} & -\frac{605}{11664} & 0 & \frac{20809}{93312} & \frac{20809}{46656} & \frac{75}{128} & \frac{20809}{46656} & \frac{20809}{93312} & 0 & -\frac{605}{11664} & -\frac{121}{2916} & 0 & \frac{4}{729} & \frac{7}{1458} \\
 0 & 0 & -\frac{1}{648} & 0 & 0 & \frac{17}{1296} & 0 & 0 & -\frac{5}{216} & 0 & 0 & \frac{17}{1296} & 0 & 0 & -\frac{1}{648} & 0 & 0 \\
 0 & 0 & 0 & \frac{1}{324} & \frac{5}{1296} & 0 & -\frac{241}{6912} & -\frac{241}{3456} & -\frac{25}{256} & -\frac{241}{3456} & -\frac{241}{6912} & 0 & \frac{5}{1296} & \frac{1}{324} & 0 & 0 & 0 \\
 0 & 0 & 0 & 0 & 0 & -\frac{1}{576} & 0 & 0 & \frac{1}{288} & 0 & 0 & -\frac{1}{576} & 0 & 0 & 0 & 0 & 0 \\
 0 & 0 & 0 & 0 & 0 & 0 & \frac{1}{256} & \frac{1}{128} & \frac{3}{256} & \frac{1}{128} & \frac{1}{256} & 0 & 0 & 0 & 0 & 0 & 0 \\
\end{array}
\right)}.
\]

\end{example}

%%%%%%%%%%%%%%%%%%%%%%%%%%%
\subsection{Numerical results}
\label{subsec:numerical_example}
%%%%%%%%%%%%%%%%%%%%%%%%%%%

In this section, we illustrate the theoretical results of Propositions \ref{p:subd_anisotropic_vcycle} and \ref{p:subd_anisotropic_approx_vcycle} with two bivariate numerical examples of the geometric multigrid method with grid transfer operators defined by the Galerkin approach applied to certain multilevel Toeplitz matrices. 
In  both examples, let
\[
\mathbf{m} = (2,m) \in \NN^2, \quad m \geq 2, \qquad \mathbf{k} = (k_1,k_2) \in \NN^2, \qquad  \ell = \min \set{k_1,k_2}-1 \in \NN.
\]
The choice $\ell = \min \set{k_1,k_2}-1$ implies that the V-cycle has full length.
The $j$-th grid of the V-cycle
\[
\mathbf{n}_j = (2^{k_1-j}-1, m^{k_2-j}-1), \qquad j = 0, \ldots, \ell,
\]
has $2^{k_1-j}, \, m^{k_2-j}$ subintervals of size $(h_j)_1 =2^{j-k_1}, \, (h_j)_2 =m^{j-k_2}$ in the coordinate directions $x_1, \, x_2$, respectively.

\noindent To define $\mathbf{b}_{\mathbf{n}_0} \in \RR^{\tilde{n}_0}$, we choose the exact solution $X \in \CC^{(\mathbf{n}_0)_2 \times (\mathbf{n}_0)_1}$ on the starting grid $\mathbf{n}_0$ as
\[
X =
\begin{bmatrix}
x_{1,1} & & \cdots & & x_{1,(\mathbf{n}_0)_1} \\
 & & & & \\
\vdots & & \ddots & & \vdots \\
 & & & & \\
x_{(\mathbf{n}_0)_2,1} & & \cdots & & x_{(\mathbf{n}_0)_2,(\mathbf{n}_0)_1} \\
\end{bmatrix}, \quad
x_{s,r} = \sin \left ( 5 \frac{\pi (s-1)}{(\mathbf{n}_0)_2-1} \right ) + \sin \left ( 5 \frac{\pi (r-1)}{(\mathbf{n}_0)_1-1} \right ), \quad s = 1, \ldots , (\mathbf{n}_0)_2, \, r = 1, \ldots , (\mathbf{n}_0)_1,
\]
we compute 
\[
\mathbf{x} = 
\begin{bmatrix}
x_{1,1} & \ldots & x_{(\mathbf{n}_0)_2,1} & x_{1,2} & \ldots & x_{(\mathbf{n}_0)_2,2} & \ldots & \ldots & x_{1,(\mathbf{n}_0)_1} & \ldots & x_{(\mathbf{n}_0)_2,(\mathbf{n}_0)_1} 
\end{bmatrix}^T \in \CC^{\tilde{n}_0} 
\]
and set $\mathbf{b}_{\mathbf{n}_0} = A_{\mathbf{n}_0} \mathbf{x} \in \CC^{\tilde{n}_0}$.

%%%%%%%%%%%%%%%%%%%%%%%%%%%
\subsubsection{Bivariate Laplacian problem}
\label{ssubsec:laplacian}
%%%%%%%%%%%%%%%%%%%%%%%%%%%

The first example we present arises from the discretization of the bivariate Laplacian problem ($q=1$) with Dirichlet boundary conditions, namely
\begin{equation} \label{eq:laplacian}
\begin{cases}
{ \displaystyle - \frac{\partial^2}{\partial x_1^2} u (x_1,x_2) - \frac{\partial^2}{\partial x_2^2}} u (x_1,x_2) = h(x_1,x_2), & \quad (x_1,x_2) \in \Omega = [0,1)^2, \\
u |_{\partial \Omega}= 0. &
\end{cases}
\end{equation}
Using finite difference discretization of order 2, for $j=0, \ldots, \ell$, the system matrices $A_{\mathbf{n}_j} = T_{\mathbf{n}_j} (f_j) \in \RR^{\tilde{n}_j \times \tilde{n}_j}$ are multilevel Toeplitz matrices defined by the trigonometric polynomials
\[
f_j (x_1,x_2) = \frac{1}{(h_j)_1^2} (2 - 2 \cos x_1) + \frac{1}{(h_j)_2^2} (2 - 2 \cos x_2), \qquad (x_1,x_2) \in [0, 2\pi)^2.
\]
Notice that $f_j$ vanishes at $(0,0)$ with order 2, thus by Propositions \ref{p:subd_anisotropic_vcycle} and \ref{p:subd_anisotropic_approx_vcycle} with $q=1$, the masks defined in Examples \ref{ex:interp23}, \ref{ex:interp25} and \ref{ex:approx23}  can be used to define the corresponding grid transfer operators. 
For an appropriate comparison, we use also the well-known \emph{bi-linear interpolation} and \emph{bi-cubic Bspline} grid transfer operators from \cite{NLL2010}, which are the $2$-directional box spline subdivision schemes in \cite{chui1988multivariate} with dilation $M= {\displaystyle \begin{pmatrix} 2 & 0 \\ 0 & 2 \end{pmatrix}}$ and masks
\[
\mathbf{P}_{1} = \frac14 \cdot \,
\left(
\begin{array}{ccc}
1 & 2 & 1 \\
2 & 4 & 2 \\ 
1 & 2 & 1
\end{array} \right ), \qquad 
\mathbf{P}_{2} = \frac{1}{64} \cdot \,
\left(
\begin{array}{ccccc}
1 & 4 & 6 & 4 & 1 \\
4 & 16 & 24 & 16 & 4 \\
6 & 24 & 36 & 24 & 6 \\
4 & 16 & 24 & 16 & 4 \\
1 & 4 & 6 & 4 & 1
\end{array} \right ).
\]
The corresponding subdivision schemes $\mathcal{S}_{\mathbf{P}_1}$ and $\mathcal{S}_{\mathbf{P}_2}$ generates polynomials up to degree 1 and 3, respectively.
Moreover, we consider the \emph{bi-cubic interpolation} grid transfer operator, known as \emph{Kobbelt} subdivision scheme (\cite{kobbelt1996interpolatory}), which is a tensor product scheme with dilation $M= {\displaystyle \begin{pmatrix} 2 & 0 \\ 0 & 2 \end{pmatrix}}$ based on the univariate binary $4$-point Dubuc-Deslauriers subdivision scheme with the symbol $\mathbf{a}_{2,2}$ in \eqref{eq:DD_interp_1D}.
Its mask is
\[
\mathbf{K} = \frac{1}{256}
\left(
\begin{array}{ccccccc}
1 & 0 & -9 & -16 & -9 & 0 & 1 \\
0 & 0 & 0 & 0 & 0 & 0 & 0 \\
-9 & 0 & 81 & 144 & 81 & 0 & -9 \\
-16 & 0 & 144 & 256 & 144 & 0 & -16 \\
-9 & 0 & 81 & 144 & 81 & 0 & -9 \\
0 & 0 & 0 & 0 & 0 & 0 & 0 \\
1 & 0 & -9 & -16 & -9 & 0 & 1
\end{array} \right ),
\]
and the associated subdivision scheme $\mathcal{S}_{\mathbf{K}}$ generates polynomials up to degree 3. We notice that $\mathcal{S}_{\mathbf{P}_1}, \mathcal{S}_{\mathbf{P}_2}$ and $\mathcal{S}_{\mathbf{K}}$ satisfy the hypothesis of Theorem \ref{t:subd_vcycle} with $q=1$.

\noindent We use as pre- and post-smoother one step of Gauss-Seidel method. The zero vector is used as the initial guess and the stopping criterion is $\norm{\mathbf{r}_s}_2 / \norm{\mathbf{r}_0}_2 < 10^{-7}$, where $\mathbf{r}_s$ is the residual vector after $s$ iterations and $10^{-7}$ is  the given tolerance.

\noindent We define the starting grid $\mathbf{n}_0$ in agreement with the dilation matrix $M$, namely
\[
\begin{array}{cc}
\medskip
\text{- for }  M={\displaystyle \begin{pmatrix} 2 & 0 \\ 0 & 2 \end{pmatrix}} : & \quad
\begin{cases}
\mathbf{n}_0 = (2^7-1,2^7-1), & \quad \text{Case } 1, \\
\mathbf{n}_0 = (2^8-1,2^8-1), & \quad \text{Case } 2,
\end{cases}  \\
\medskip
\text{- for }  M={\displaystyle \begin{pmatrix} 2 & 0 \\ 0 & 3 \end{pmatrix}} : & \quad
\begin{cases}
\mathbf{n}_0 = (2^7-1,3^4-1), & \quad \text{Case } 1, \\
\mathbf{n}_0 = (2^8-1,3^5-1), & \quad \text{Case } 2, 
\end{cases} \\
\text{- for }  M={\displaystyle \begin{pmatrix} 2 & 0 \\ 0 & 5 \end{pmatrix}} : & \quad
\begin{cases}
\mathbf{n}_0 = (2^7-1,5^3-1), & \quad \text{Case } 1, \\
\mathbf{n}_0 = (2^9-1,5^4-1), & \quad \text{Case } 2,
\end{cases}
\end{array}
\]

\noindent Table \ref{table:laplacian} shows how the number of iterations and convergence rates for the V-cycle change when the starting grid $\mathbf{n}_0$ becomes finer.
The results in Table \ref{table:laplacian} support our theoretical analysis in section \ref{sec:examples}, as they show that subdivision schemes with different dilation matrices and appropriate degree of polynomials generation define grid transfer operators capable of guaranteeing convergence and optimality of the corresponding V-cycle method. 
The grid transfer operators defined from the subdivision schemes with dilation $M={\displaystyle \begin{pmatrix} 2 & 0 \\ 0 & 2 \end{pmatrix}}$ perform better than the grid transfer operators defined from the anisotropic subdivision schemes. This happens since the bivariate Laplacian problem in \eqref{eq:laplacian} is symmetric with respect to the two coordinate directions. If we use grid transfer operators derived from subdivision schemes with dilation $M={\displaystyle \begin{pmatrix} 2 & 0 \\ 0 & 2 \end{pmatrix}}$ or, equivalently, grid transfer operators defined from the downsampling matrix with the factor $\mathbf{m} = (2,2)$, we preserve the symmetry of the problem at each $j$-th step of the V-cycle, $j=0, \ldots, \ell$.
Moreover, at each Coarse Grid Correction step, we downsample the data with the factor $\mathbf{m} = (2,m)$ and the larger is $m$ the more information we lose. Thus, the number of iterations required for convergence is larger for $m >2$. 
Finally, we notice that there is no crucial difference between polynomial generation and reproduction properties for convergence and optimality of the V-cycle method.

\begin{table} 
\centering
\small
\begin{tabular}{cc@{\qquad\;\;}cc@{\qquad\;\;}cc@{\qquad\;\;}c}
\toprule
{Dilation} & {Subdivision} & \multicolumn{2}{c@{\qquad\;\;}}{Case 1} & \multicolumn{2}{c@{\qquad\;\;}}{Case 2} &  {Generation}\\
{matrix} & {scheme} & {iter} & {conv. rate} & {iter} & {conv. rate} & {degree} \\
\toprule
\multirow{3}*{$M={\displaystyle \begin{pmatrix} 2 & 0 \\ 0 & 2 \end{pmatrix}}$} & $\mathcal{S}_{\mathbf{P}_1}$ & 9 & 0.1432 & 9 & 0.1374 & 1 \\
& $\mathcal{S}_{\mathbf{P}_2}$ & 13 & 0.2823 & 13 & 0.27 & 3 \\
& $\mathcal{S}_{\mathbf{K}}$ & 8 & 0.1224 & 8 & 0.1275 & 3 \\
\midrule
\multirow{8}*{$M={\displaystyle \begin{pmatrix} 2 & 0 \\ 0 & 3 \end{pmatrix}}$} & $\mathcal{S}_{\mathbf{a}_{M,1}}$ & 28 & 0.5573 & 23 & 0.4958 &  1 \\
& $\mathcal{S}_{\mathbf{a}_{M,2}}$ & 26 & 0.5297 & 22 & 0.4777 & 3 \\
& $\mathcal{S}_{\mathbf{a}_{M,3}}$ & 26 & 0.5347 & 23 & 0.4893 & 5 \\
& $\mathcal{S}_{\mathbf{B}_{2,0}}$ & 33 & 0.6082 & 26 & 0.5298 & 3 \\
& $\mathcal{S}_{\mathbf{B}_{2,1}}$ & 26 & 0.5298 & 22 & 0.4477 & 3 \\
& $\mathcal{S}_{\mathbf{B}_{3,0}}$ & 41 & 0.6718 & 35 & 0.6272 & 5 \\
& $\mathcal{S}_{\mathbf{B}_{3,1}}$ & 24 & 0.5096 & 22 & 0.4787 & 5 \\
& $\mathcal{S}_{\mathbf{B}_{3,2}}$ & 26 & 0.5347 & 23 & 0.4893 & 5 \\
\midrule
\multirow{2}*{$M={\displaystyle \begin{pmatrix} 2 & 0 \\ 0 & 5 \end{pmatrix}}$} & $\mathcal{S}_{\mathbf{a}_{M,1}}$ & 38 & 0.6529 & 45 & 0.6969 &  1 \\
& $\mathcal{S}_{\mathbf{a}_{M,2}}$ & 38 & 0.6532 & 40 & 0.6774 & 3 \\
\bottomrule
\end{tabular}
\caption{Laplacian problem.}
\label{table:laplacian}
\end{table}

%%%%%%%%%%%%%%%%%%%%%%%%%%%
\subsubsection{Bivariate anisotropic Laplacian problem}
\label{ssubsec:laplacian_anisotropic}
%%%%%%%%%%%%%%%%%%%%%%%%%%%

The second example we present arises from the discretization of the bivariate anisotropic Laplacian problem ($q=1$) with Dirichlet boundary conditions, namely
\begin{equation} \label{eq:laplacian_anisotropic}
\begin{cases}
{ \displaystyle - \varepsilon \frac{\partial^2}{\partial x_1^2} u (x_1,x_2) - \frac{\partial^2}{\partial x_2^2}} u (x_1,x_2) = h(x_1,x_2), & \quad (x_1,x_2) \in \Omega = [0,1)^2, \\
u |_{\partial \Omega}= 0. &
\end{cases}
\end{equation}
The parameter $\varepsilon \in (0,1]$ in \eqref{eq:laplacian_anisotropic} is called \emph{anisotropy}. If $\varepsilon=1$, we get the standard isotropic Laplacian problem \eqref{eq:laplacian}. 
If $\varepsilon << 1$, the problem becomes strongly anisotropic. We focus our attention on the latter case \citep{mgm-book}.

\noindent Using finite difference discretization of order 2, for $j=0, \ldots, \ell$, the system matrices $A_{\mathbf{n}_j} = T_{\mathbf{n}_j} (f_j) \in \RR^{\tilde{n}_j \times \tilde{n}_j}$ are multilevel Toeplitz matrices defined by the trigonometric polynomials
\[
f_j^{(\varepsilon)} (x_1,x_2) = \frac{\varepsilon}{(h_j)_1^2} (2 - 2 \cos x_1) + \frac{1}{(h_j)_2^2} (2 - 2 \cos x_2), \qquad (x_1,x_2) \in [0, 2\pi)^2.
\]

Let $\mathbf{m} = (2,2) \in \NN^2$, $\mathbf{k} = (k,k) \in \NN^2$ and $\ell = k-1$. For $j=0, \ldots, \ell$, the $j$-th grid of the V-cycle is symmetric, namely
\[
\mathbf{n}_j = (2^{k-j}-1, 2^{k-j}-1), \qquad (h_j)_1 = (h_j)_2 = 2^{-(k-j)}.
\]
Thus, we can rewrite the trigonometric polynomials $f_j^{(\varepsilon)}$, $j=0, \ldots, \ell$, as
\[
f_j^{(\varepsilon)} (x_1,x_2) = 2^{2(k-j)} \biggl ( \varepsilon \, (2 - 2 \cos x_1) + (2 - 2 \cos x_2) \biggr ), \qquad (x_1,x_2) \in [0, 2\pi)^2.
\]
If $\varepsilon << 1$, the symbol $f_j^{(\varepsilon)}$ is numerically close to 0 on the entire line $x_2 = 0$, for all $j=0, \ldots, \ell$ (see Figure \ref{fig:laplacian_epsilon}).  Due to this pathology, when the anisotropy $\varepsilon$ goes to 0, the number of iterations necessary for the convergence of the V-cycle method rises because the symbol vanishes on a whole curve and hence conditions $(i)$ and $(ii)$ cannot be satisfied together \cite{Fischer2006}. 

\begin{figure}
\centering
\subfloat[Isotropic Laplacian: $\varepsilon = 1$]
{\includegraphics[scale=0.5,trim={1cm 0.5cm 1cm 0.5cm},clip]{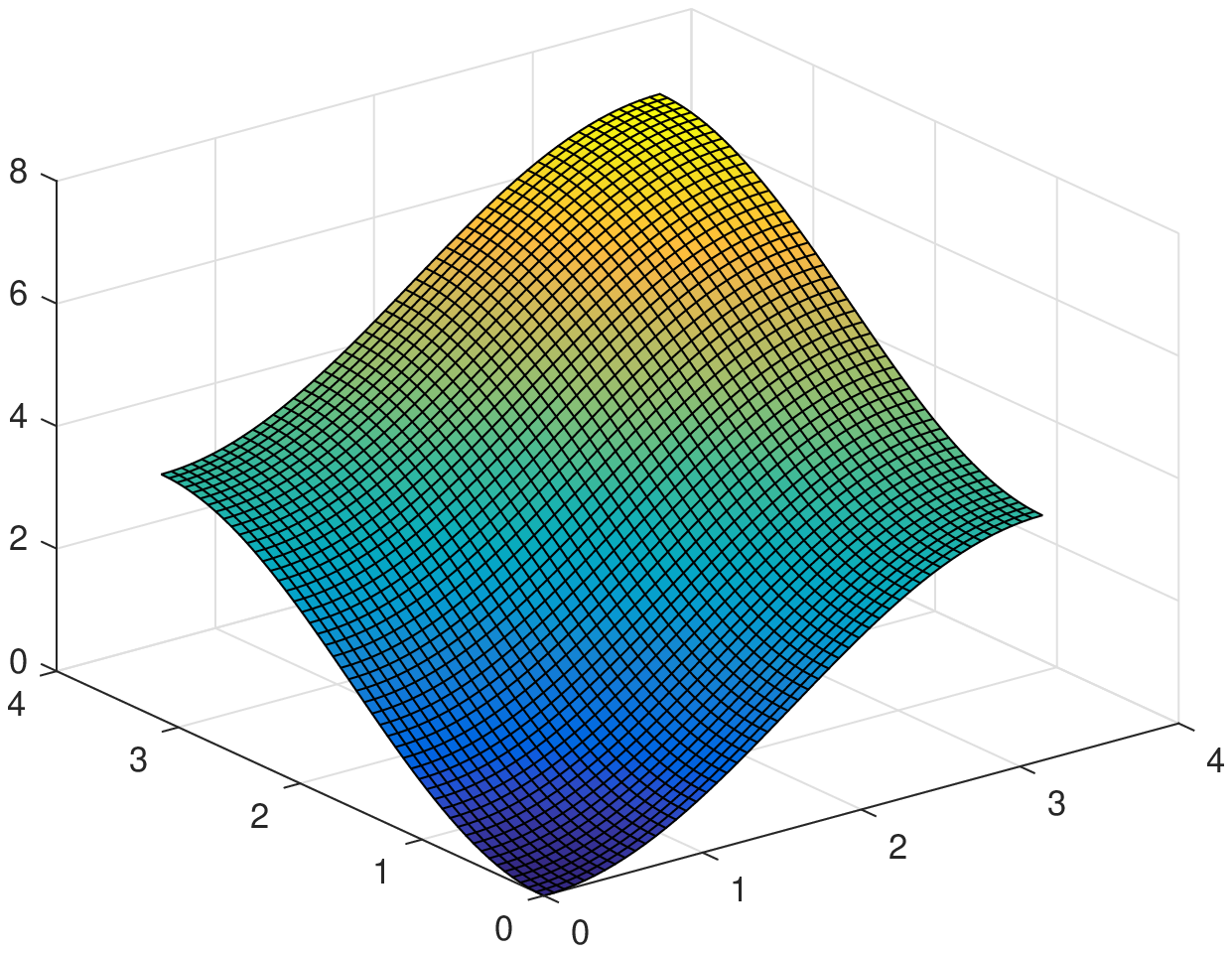}} \qquad
\subfloat[Strongly anisotropic Laplacian: $\varepsilon=10^{-2}$]
{\includegraphics[scale=0.5,trim={1cm 0.5cm 1cm 0.5cm},clip]{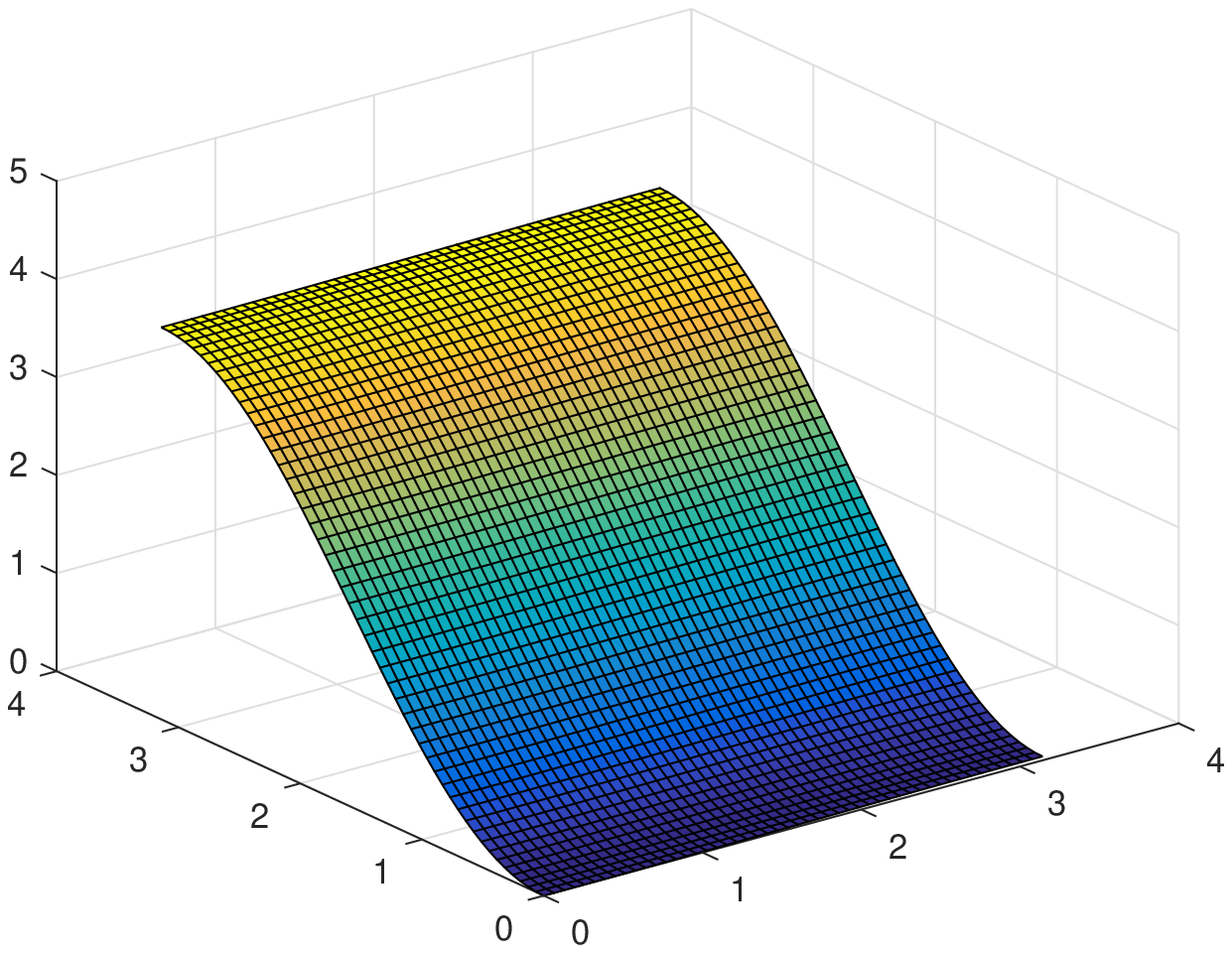}}
\caption{Plot of $f_0^{(\varepsilon)}$ on the reference interval $[0,\pi)^2$ for different values of $\varepsilon \in (0,1]$.}
\label{fig:laplacian_epsilon}
\end{figure}

Let $\mathbf{m} = (2,m) \in \NN^2$, $m > 2$, and define $\mathbf{k} = (k_1,k_2) \in \NN^2$ such that $k_2 = \max \set{k \in \NN \, : \, m^{k}-1 \leq 2^{k_1}-1}$.
We can rewrite the polynomials $f_j^{(\varepsilon)}$, $j=0, \ldots, \ell$,  as
\[
\begin{split}
f_j^{(\varepsilon)} (x_1,x_2) &= \frac{\varepsilon}{(h_j)_1^2} (2 - 2 \cos x_1) + \frac{1}{(h_j)_2^2} (2 - 2 \cos x_2), \\
&= \frac{1}{(h_j)_2^2} \Biggl ( \frac{\varepsilon (h_j)_2^2}{(h_j)_1^2} \, (2 - 2 \cos x_1) + (2 - 2 \cos x_2) \Biggr ), \\
&=  m^{2(k_2-j)} \Biggl ( \varepsilon \,  \frac{2^{2(k_1-j)}}{m^{2(k_2-j)}} \, (2 - 2 \cos x_1) + (2 - 2 \cos x_2) \Biggr ), \\
&=  m^{2(k_2-j)} \Biggl ( \varepsilon_j \, (2 - 2 \cos x_1) + (2 - 2 \cos x_2) \Biggr ), \qquad \varepsilon_j =  \varepsilon \frac{2^{2(k_1-j)}}{m^{2(k_2-j)}}, \qquad (x_1,x_2) \in [0, 2\pi)^2.
\end{split}
\]
The value $\varepsilon_j$ represents the anisotropy of the discretized problem \ref{eq:laplacian_anisotropic} on the $j$-th grid $\mathbf{n}_j$ of the V-cycle, $j=0, \ldots, \ell$.
Especially, we have  
\begin{equation} \label{eq:epsilon_j}
 \varepsilon_j = \varepsilon \, \frac{2^{2(k_1-j)}}{m^{2(k_2-j)}} = \frac{m^2}{4} \Biggl ( \varepsilon \, \frac{2^{2(k_1-(j-1))}}{m^{2(k_2-(j-1))}} \Biggr ) = \frac{m^2}{4} \varepsilon_{j-1} > \varepsilon_{j-1}, \qquad j=1, \ldots, \ell.
\end{equation}
This means that the matrix $A_{\mathbf{n}_j} = T_{\mathbf{n}_j} (f_j)$ at the $j$-th level of the V-cycle is less anisotropic than the matrix $A_{\mathbf{n}_{j-1}} = T_{\mathbf{n}_{j-1}} (f_{j-1})$ at the $(j-1)$-th level of the V-cycle, $j=1, \ldots, \ell$.
Motivated by this property and the observations related to the standard Laplacian problem in subsection \ref{ssubsec:laplacian}, we propose a multigrid strategy which combines both anisotropic and symmetric cutting strategies.
More precisely, we define the starting grid $\mathbf{n}_0$ by
\begin{equation} \label{eq:starting_grid}
\mathbf{n}_0 = (2^{k_1}-1, m^{h} \cdot 2^{k_2}-1), \qquad h \in \NN,
\end{equation}
and we choose $\mathbf{k}=(k_1,k_2) \in \NN^2$ such that $k_2 = \max \set{k \in \NN \, : \,  m^{h} \cdot 2^{k}-1 \leq 2^{k_1}-1}$. 
We fix $\ell = \min \set{k_1, h+k_2}-1$, in order to guarantee a V-cycle method with full length. Then, we define the $j$-th grid of the V-cycle by
\[
\mathbf{n}_j = 
\begin{cases}
(2^{k_1-j}-1, m^{h-j} \cdot 2^{k_2}-1), & j = 0, \ldots, h, \\
(2^{k_1-j}-1, 2^{k_2-(j-h)}-1), & j = h+1, \ldots, \ell.
\end{cases}
\]
Finally, we construct the grid transfer operators $P_{\mathbf{n}_j}$ as subdivision schemes with dilation $M = {\displaystyle \begin{pmatrix} 2 & 0 \\ 0 & m\end{pmatrix}}$ for $j=0, \ldots, h$, and as subdivision schemes with dilation $M = {\displaystyle \begin{pmatrix} 2 & 0 \\ 0 & 2\end{pmatrix}}$ for $j=h+1, \ldots, \ell$. Especially, for our numerical experiments, we use the bi-linear interpolation grid transfer operator for $j=h+1, \ldots, \ell$. 
If we choose $h \in \NN$ properly, due to \eqref{eq:epsilon_j}, we can handle the anisotropy of the problem in $h$ steps of the V-cycle. Thus, for $j=h+1, \ldots, \ell$, a symmetric cutting strategy performs better than an anisotropic cutting strategy.

\noindent For the numerical experiments, we use as pre- and post-smoother one step of Gauss-Seidel method for $j=1, \ldots, \ell$, and 2 steps of Gauss-Seidel method for $j=0$. The zero vector is used as the initial guess and the stopping criterion is $\norm{\mathbf{r}_s}_2 / \norm{\mathbf{r}_0}_2 < 10^{-5}$, where $\mathbf{r}_s$ is the residual vector after $s$ iterations and $10^{-5}$ is the given tolerance. 

\noindent We define the starting grid $\mathbf{n}_0$ by \eqref{eq:starting_grid}, namely
\[
\begin{array}{cc}
\medskip
\text{- for }  M={\displaystyle \begin{pmatrix} 2 & 0 \\ 0 & 2 \end{pmatrix}} : & \quad
\begin{cases}
\mathbf{n}_0 = (2^7-1,2^7-1), & \quad \text{Case } 1, \\
\mathbf{n}_0 = (2^8-1,2^8-1), & \quad \text{Case } 2,
\end{cases}  \\
\medskip
\text{- for }  M={\displaystyle \begin{pmatrix} 2 & 0 \\ 0 & 3 \end{pmatrix}} : & \quad
\begin{cases}
\mathbf{n}_0 = (2^7-1,3^2 \cdot 2^3-1), & \quad \text{Case } 1, \\
\mathbf{n}_0 = (2^8-1,3^2 \cdot 2^4-1), & \quad \text{Case } 2, 
\end{cases} \\
\text{- for }  M={\displaystyle \begin{pmatrix} 2 & 0 \\ 0 & 5 \end{pmatrix}} : & \quad
\begin{cases}
\mathbf{n}_0 = (2^8-1,5 \cdot 2^5-1), & \quad \text{Case } 1, \\
\mathbf{n}_0 = (2^8-1,5^2 \cdot 2^3-1), & \quad \text{Case } 2,
\end{cases}
\end{array}
\]

Tables \ref{table:laplacian_anisotropic_10-2} and \ref{table:laplacian_anisotropic_10-3}  show how the number of iterations and convergence rates for the V-cycle change when the starting grid $\mathbf{n}_0$ becomes finer and the anisotropy $\varepsilon$ in \eqref{eq:laplacian_anisotropic} decreases.
The results support our theoretical analysis. Especially, the grid transfer operators defined from the anisotropic subdivision schemes with dilation $ M={\displaystyle \begin{pmatrix} 2 & 0 \\ 0 & 3 \end{pmatrix}}$ perform better than all the other grid transfer operators. Indeed, after 2 steps of downsampling with the factor $\mathbf{m} = (2,3)$, the anisotropy of the problem increases by a factor $\frac{81}{16} \approx 5$. Moreover, when we downsample the data with the factor $\mathbf{m} = (2,3)$ we lose less information than when we sample the data with the factor $\mathbf{m} = (2,5)$.
Among the grid transfer operators defined from the anisotropic subdivision schemes with dilation $ M={\displaystyle \begin{pmatrix} 2 & 0 \\ 0 & 3 \end{pmatrix}}$, we pay special attention to the interpolatory ones.
The advantage of using the anisotropic interpolatory subdivision schemes is the computational efficiency of the corresponding grid transfer operations. Indeed, the matrices $A_{\mathbf{n}_j}$, $j=0,\dots,\ell$, are independent of the grid transfer operators 
and the computational cost of the restriction and prolongation depends only on the number of nonzero entries of the corresponding operators.
Therefore, since for a fixed $n \in \NN$ the mask $\mathbf{a}_{M,n}$ of the interpolatory subdivision schemes $\mathcal{S}_{\mathbf{a}_{M,n}}$ in Definition \ref{d:interp_anisotropic} has less nonzero entries than the masks $\mathbf{B}_{n, \ell}$, $\ell = 0, \ldots, n-1$, of the approximating subdivision schemes $\mathcal{S}_{\mathbf{B}_{n, \ell}}$ in Definition \ref{d:pseudo_anisotropic}, each iteration of the V-cycle method with the interpolatory grid transfer operator associated to $\mathcal{S}_{\mathbf{a}_{M,n}}$ is cheaper than one V-cycle iteration with the approximating grid transfer operators associated to $\mathcal{S}_{\mathbf{B}_{n, \ell}}$.
Finally, we notice that there is no crucial difference between polynomial generation and reproduction properties for convergence and optimality of the V-cycle method.

\begin{table} 
\centering
\small
\begin{tabular}{cc@{\qquad\;\;}cc@{\qquad\;\;}cc@{\qquad\;\;}c}
\toprule
{Dilation} & {Subdivision} & \multicolumn{2}{c@{\qquad\;\;}}{Case 1} & \multicolumn{2}{c@{\qquad\;\;}}{Case 2} &  {Generation}\\
{matrix} & {scheme} & {iter} & {conv. rate} & {iter} & {conv. rate} & {degree} \\
\toprule
\multirow{3}*{$M={\displaystyle \begin{pmatrix} 2 & 0 \\ 0 & 2 \end{pmatrix}}$} & $\mathcal{S}_{\mathbf{P}_1}$ & 75 & 0.8571 & 80 & 0.8658 & 1 \\
& $\mathcal{S}_{\mathbf{P}_2}$ & 82 & 0.8686 & 86 & 0.8744 & 3 \\
& $\mathcal{S}_{\mathbf{K}}$ & 61 & 0.8273 &76 & 0.8585 & 3 \\
\midrule
\multirow{8}*{$M={\displaystyle \begin{pmatrix} 2 & 0 \\ 0 & 3 \end{pmatrix}}$} & $\mathcal{S}_{\mathbf{a}_{M,1}}$ & 14 & 0.4315 & 16 & 0.4807 &  1 \\
& $\mathcal{S}_{\mathbf{a}_{M,2}}$ & 14 & 0.4307 & 16 & 0.48 & 3 \\
& $\mathcal{S}_{\mathbf{a}_{M,3}}$ & 14 & 0.4312 & 16 & 0.4806 & 5 \\
& $\mathcal{S}_{\mathbf{B}_{2,0}}$ & 13 & 0.5145 & 16 & 0.4780 & 3 \\
& $\mathcal{S}_{\mathbf{B}_{2,1}}$ & 14 & 0.4307 & 16 & 0.48 & 3 \\
& $\mathcal{S}_{\mathbf{B}_{3,0}}$ & 14 & 0.4363 & 17 & 0.5003 & 5 \\
& $\mathcal{S}_{\mathbf{B}_{3,1}}$ & 13 & 0.4112 & 15 & 0.4633 & 5 \\
& $\mathcal{S}_{\mathbf{B}_{3,2}}$ & 14 & 0.4312 & 16 & 0.4806 & 5 \\
\midrule
\multirow{2}*{$M={\displaystyle \begin{pmatrix} 2 & 0 \\ 0 & 5 \end{pmatrix}}$} & $\mathcal{S}_{\mathbf{a}_{M,1}}$ & 20 & 0.5623 & 25 & 0.6307 & 1 \\
& $\mathcal{S}_{\mathbf{a}_{M,2}}$ & 21& 0.5719 & 26 & 0.6385 & 3 \\
\bottomrule
\end{tabular}
\caption{Anisotropic Laplacian problem with $\varepsilon = 10^{-2}$.}
\label{table:laplacian_anisotropic_10-2}
\end{table}

\begin{table} 
\centering
\small
\begin{tabular}{cc@{\qquad\;\;}cc@{\qquad\;\;}cc@{\qquad\;\;}c}
\toprule
{Dilation} & {Subdivision} & \multicolumn{2}{c@{\qquad\;\;}}{Case 1} & \multicolumn{2}{c@{\qquad\;\;}}{Case 2} &  {Generation}\\
{matrix} & {scheme} & {iter} & {conv. rate} & {iter} & {conv. rate} & {degree} \\
\toprule
\multirow{3}*{$M={\displaystyle \begin{pmatrix} 2 & 0 \\ 0 & 2 \end{pmatrix}}$} & $\mathcal{S}_{\mathbf{P}_1}$ & 294 & 0.9616 & 284 & 0.9603 & 1 \\
& $\mathcal{S}_{\mathbf{P}_2}$ & 295 & 0.9617 & 281 & 0.9599 & 3 \\
& $\mathcal{S}_{\mathbf{K}}$ & 253 & 0.9555 & 251 & 0.9551 & 3 \\
\midrule
\multirow{8}*{$M={\displaystyle \begin{pmatrix} 2 & 0 \\ 0 & 3 \end{pmatrix}}$} & $\mathcal{S}_{\mathbf{a}_{M,1}}$ & 33 & 0.7051 & 44 & 0.7694 &  1 \\
& $\mathcal{S}_{\mathbf{a}_{M,2}}$ & 33 & 0.7050 & 44 & 0.7695 & 3 \\
& $\mathcal{S}_{\mathbf{a}_{M,3}}$ & 33 & 0.7050 & 44 & 0.7697 & 5 \\
& $\mathcal{S}_{\mathbf{B}_{2,0}}$ & 30 & 0.6813 & 42 & 0.7592 & 3 \\
& $\mathcal{S}_{\mathbf{B}_{2,1}}$ & 33 & 0.7050 & 44 & 0.7695 & 3 \\
& $\mathcal{S}_{\mathbf{B}_{3,0}}$ & 30 & 0.6807 & 41 & 0.7540 & 5 \\
& $\mathcal{S}_{\mathbf{B}_{3,1}}$ & 31 & 0.6893 & 43 & 0.7641 & 5 \\
& $\mathcal{S}_{\mathbf{B}_{3,2}}$ & 33 & 0.7050 & 44 & 0.7697 & 5 \\
\midrule
\multirow{2}*{$M={\displaystyle \begin{pmatrix} 2 & 0 \\ 0 & 5 \end{pmatrix}}$} & $\mathcal{S}_{\mathbf{a}_{M,1}}$ & 62 & 0.8301 & 69 & 0.8462 &  1 \\
& $\mathcal{S}_{\mathbf{a}_{M,2}}$ & 62 & 0.8304 & 70 & 0.8479 & 3 \\
\bottomrule
\end{tabular}
\caption{Anisotropic Laplacian problem with $\varepsilon = 10^{-3}$.}
\label{table:laplacian_anisotropic_10-3}
\end{table}

Tables \ref{table:laplacian_anisotropic_10-2} and \ref{table:laplacian_anisotropic_10-3} justify the use of the dilation matrix $M = {\displaystyle \begin{pmatrix} 2 & 0 \\ 0 & 3 \end{pmatrix}}$ in our analysis. Note that the schemes with $M = {\displaystyle \begin{pmatrix} 2 & 0 \\ 0 & 5 \end{pmatrix}}$ have a slower convergence rate, which is influenced by the larger support sizes of their masks and by the less efficient approximation caused by inappropriate coarsening of the mesh in the $y$ direction.

\section{Conclusions} \label{sec:conclusion}

In this paper, we have constructed a family of bivariate interpolatory subdivision schemes with dilation $M = \mathrm{diag }(2,m)$, $m \in \NN$ odd. We have investigated their minimality, polynomial reproduction and convergence properties.
In case of $M = \mathrm{diag }(2,3)$, we have also defined two families of bivariate approximating subdivision schemes characterized by specific polynomial generation and reproduction properties.
We have shown that these families of anisotropic subdivision schemes define powerful grid transfer operators in anisotropic geometric multigrid. 
Especially, we have confirmed their strength for the numerical solution of the anisotropic Laplacian problem with anisotropy along coordinate axes.

Our results can be extended in many directions.
A proper analysis of the anisotropic Laplacian problem with anisotropy in other directions (using the approach in \cite{Fischer2006}) is of future interest.
Moreover, we plan to study the higher regularity of the proposed interpolatory and approximating subdivision schemes for an eventual application in surface generation.

\bibliographystyle{plain}
\bibliography{bibliography}

\end{document}